\definecolor{uuuuuu}{rgb}{0,0,0}\definecolor{ududff}{rgb}{0,0,0}\definecolor{xdxdff}{rgb}{0,0,0}
\newtheorem{thm}{Theorem}[section]
\newtheorem{defi}[thm]{Definition}
\newtheorem{prop}[thm]{Proposition}
\newtheorem{cor}[thm]{Corollary}
\newtheorem{lem}[thm]{Lemma}
\newtheorem{rem}[thm]{Remark}
\newtheorem*{rep@theorem}{\rep@title}
\newcommand{\newreptheorem}[2]{%
	\newenvironment{rep#1}[1]{%
		\def\rep@title{#2 \ref{##1}}%
		\begin{rep@theorem}}%
		{\end{rep@theorem}}}
\newcommand{\RomanNumeralCaps}[1]
{\MakeUppercase{\romannumeral #1}}
\numberwithin{equation}{section}
\numberwithin{figure}{section}
\renewcommand{\rm}[1]{\mathrm{#1}}
\renewcommand{\cal}[1]{\mathcal{#1}}
\newcommand{\bb}[1]{\mathbb{#1}}
\newcommand{\dd}{\mathrm{d}}
\newcommand{\supp}{\rm{supp}}
\newcommand{\delx}{-\delta(X)^2/4}
\newcommand{\hfn}{\bb{H}^n_{\bb{F}}}
\def\cusp{O}
\def\varbet{\exp^\perp}
\def\mani{M}
\def\l{\langle}
\def\r{\rangle}
\def\nexp{\exp^\perp }
\def\convex{D}
\def\eigen{\err(x)}
\def\err{c }
\def\normal{\mathbf{n}}
\def\opt{T}
\def\Rplus{\bb R_{\geq 0}}
\def\R {\bb R}
\def\two{\rm{\RomanNumeralCaps{2}}}
\def\M{Z}
\def\theone{\theta}
\def\thetwo{\iota}
\begin{document}

\bibliographystyle{alpha}
\title{\textbf{Finiteness of Small Eigenvalues of Geometrically Finite Rank one Locally Symmetric Manifolds}}
\author{LI Jialun}
\date{}
\maketitle

\begin{abstract}
	Let $M$ be a geometrically finite rank one locally symmetric manifolds. We prove that the spectrum of the Laplace operator on $M$ is finite in a small interval which is optimal. 
\end{abstract}

\section{Introduction}
\label{sec:intro}

Let $M$ be a complete Riemannian manifold. The Laplace operator $\Delta$ acts on the compactly supported smooth functions and admits an extension to an unbounded self-adjoint operator on $L^2(M)$. The study of the spectrum of the Laplace operator on geometrically finite hyperbolic manifolds was started by Lax and Phillips in \cite{laxphillips82}. The motivation is to give an exponential error term in the asymptotic distribution of orbits for discrete subgroups of the group of isometries of $\tilde{M}$. They proved that on a geometrically finite real hyperbolic manifold of dimension $n$, the intersection of the interval $(-(n-1)^2/4,0]$ and the spectrum consists of at most finitely many eigenvalues. Geometrical finiteness means that the quotient manifold has a fundamental domain which is a finitely sided polyhedron. 

For general cases (see \cite{hamenstadt2004}), Hamenst\"adt proved that on geometrically finite rank one locally symmetric manifolds $\Gamma\backslash X$, where $X=\hfn$ for $\bb F=\bb R,\bb C,\bb H $ or $\hfn=\bb{H}^2_{\bb O}$, the intersection of the interval $[\delx+\chi,0]$ and the spectrum also consists of at most finitely many eigenvalues, where $\chi$ is any positive number and the volume entropy $\delta(X)$ is $(n+1)\dim_{\bb R}\bb F-2$. In this case, the applications to counting problems were given in \cite{kim2015counting}. Inspired by the method in \cite{laxphillips82}, we generalize the result of Lax and Phillips to the case of rank one locally symmetric manifolds.
\begin{thm}\label{the:main}
Let $M=\Gamma\backslash X$ be a geometrically finite rank one locally symmetric manifold. Then the intersection of the spectrum of the Laplace operator and the critical interval $(\delx,0]$ consists of finitely many eigenvalues of finite multiplicities.
\end{thm}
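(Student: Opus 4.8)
The plan is to follow the strategy of Lax and Phillips \cite{laxphillips82}: reduce the global spectral problem to local problems on the ends of $M$ by a domain-decomposition (Dirichlet--Neumann bracketing) argument, and then control the spectrum on each end using the explicit asymptotic geometry of rank one symmetric spaces. Throughout I work with the nonnegative operator $-\Delta$, whose spectrum lies in $[0,\infty)$; the interval $(\delx,0]$ corresponds to $[0,\delta(X)^2/4)$, and the claim becomes that $-\Delta$ has only finitely many eigenvalues, each of finite multiplicity, in $[0,\delta(X)^2/4)$. For $\lambda<\inf\sigma_{\mathrm{ess}}(-\Delta)$ the part of the spectrum below $\lambda$ is automatically discrete of finite multiplicity, so the assertion amounts to two statements: $\inf\sigma_{\mathrm{ess}}(-\Delta)\ge \delta(X)^2/4$, and there is no accumulation of eigenvalues at $\delta(X)^2/4$ from below.

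First I would use geometric finiteness to fix a decomposition $M=K\cup E_1\cup\dots\cup E_k$, where $K$ is compact and each $E_i$ is an end separated from $K$ by a smooth compact hypersurface $\Sigma_i$; by the structure theory of geometrically finite quotients each $E_i$ is either a flaring (funnel) end of the convex core or a cusp neighborhood of a parabolic fixed point, and on each end the metric is given by an explicit warped model built from the horospherical coordinates of $X=\hfn$. Cutting along the $\Sigma_i$ and imposing Neumann conditions enlarges the form domain, so by the min--max principle the counting functions satisfy
\[
N_M(\lambda)\le N^{\mathrm N}_K(\lambda)+\sum_{i=1}^k N^{\mathrm N}_{E_i}(\lambda),
\]
where $N_\bullet(\lambda)$ counts eigenvalues below $\lambda$ with multiplicity. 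Since $K$ is compact, $N^{\mathrm N}_K(\lambda)\le N^{\mathrm N}_K(\delta(X)^2/4)<\infty$ is bounded independently of $\lambda<\delta(X)^2/4$. Hence the theorem follows once I show that for each end $N^{\mathrm N}_{E_i}(\lambda)$ stays bounded as $\lambda\uparrow\delta(X)^2/4$, i.e.\ that the Neumann Laplacian on each $E_i$ has only finitely many eigenvalues below $\delta(X)^2/4$. The decomposition principle of Donnelly and Li also gives $\sigma_{\mathrm{ess}}(-\Delta)=\bigcup_i\sigma_{\mathrm{ess}}(-\Delta^{\mathrm N}_{E_i})$, so the same end analysis simultaneously yields $\inf\sigma_{\mathrm{ess}}(-\Delta)\ge\delta(X)^2/4$.

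The heart of the matter is therefore a sharp spectral lower bound on the ends. On an end I would separate variables using the horospherical foliation: write the radial variable $r$ (distance to $\Sigma_i$) together with coordinates on the cross-section, which is a compact nilmanifold quotient of a horosphere for a cusp, or the boundary hypersurface for a funnel. In these coordinates the volume element grows like $e^{\delta(X) r}$, and a Hardy-type manipulation (a ground-state substitution $f=e^{-\delta(X)r/2}u$ followed by integration by parts in $r$) converts the radial Dirichlet energy into the inequality $\int_{E_i}|\nabla f|^2\ge \tfrac{\delta(X)^2}{4}\int_{E_i}|f|^2$ for all $f$ in the form domain, modulo a finite-dimensional space of low cross-sectional modes. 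This pins $\inf\sigma_{\mathrm{ess}}(-\Delta^{\mathrm N}_{E_i})$ at $\delta(X)^2/4$ and shows that the only eigenvalues dropping below $\delta(X)^2/4$ arise from the finitely many cross-sectional modes, each governed by a one-dimensional Schr\"odinger operator on $[0,\infty)$ with finitely many bound states; in particular there is no accumulation at the threshold.

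The main obstacle is carrying out this end analysis with the correct constant in the genuinely rank one (non-real-hyperbolic) cases $\bb F=\bb C,\bb H,\bb O$. There the parabolic stabilizers are Heisenberg-type groups, the cusp cross-sections are nilmanifolds on which the two horospherical strata (the center and its complement) are dilated by different powers of $e^{r}$, so the warped product is not a single exponential and the naive Cheeger/volume-growth bound does not immediately produce the sharp exponent $\delta(X)=(n+1)\dim_{\bb R}\bb F-2$. One must track this anisotropic scaling carefully to see that the radial energy still controls exactly $\delta(X)^2/4$, and must verify that the transverse Schr\"odinger operators attached to the higher nilpotent Fourier modes have spectrum bounded below by $\delta(X)^2/4$, so that they contribute no low eigenvalues and only finitely many modes remain to inspect. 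Matching the flaring funnel ends and the cusp ends into a single uniform bound, and handling the Neumann boundary terms on the $\Sigma_i$, should then be routine by comparison with the explicit models.
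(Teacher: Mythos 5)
There is a genuine gap, and it sits exactly where the theorem is new. Your whole scheme rests on decomposing $M$ into a compact core and ends $E_i$ separated by \emph{compact} hypersurfaces, with each cusp end a warped product over a \emph{compact} nilmanifold $\Pi\backslash N$. This is true only for cusps of maximal rank. When the parabolic subgroup $\Pi$ has non-maximal rank, $\Pi\backslash N$ is non-compact, the quotient of a horoball is \emph{not} a neighbourhood of the parabolic end (a standard cusp region must be taken much larger and meets the ideal boundary; see the discussion after Proposition \ref{prop:mrank}), and the cusp region and the flaring region outside the convex core overlap in a non-compact set. Moreover the boundary of the convex core itself runs out into such cusps, so the ``funnel'' cross-section is non-compact as well. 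Consequently no disjoint decomposition of the kind you bracket over exists; the transverse Laplacian on a non-compact cusp cross-section has continuous spectrum reaching down to $0$, so the reduction to finitely many cross-sectional modes, each governed by a one-dimensional Schr\"odinger operator, is unavailable, and your bound on $N^{\mathrm N}_{E_i}(\lambda)$ has no proof. What survives of your argument is essentially the convex cocompact and maximal-rank-cusp cases, which is the ``simple case'' the paper treats first (and is also the setting of the prior literature); the quaternionic and Cayley geometrically finite cases that motivate the theorem require the non-maximal-rank analysis.

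Two further points would need repair even where cross-sections are compact. First, the region outside the convex core is only a \emph{quasi}-warped product: in normal exponential coordinates the transverse metric $g(x,t)$ depends on $x$ through the second fundamental form, so strict separation of variables fails; the paper replaces it by a quadratic-form (Barta-type) argument using only the Jacobi-field estimate $\partial_t h\geq 2\ell\tanh(t)\,h$ of Lemma \ref{volumefunction}, whose $O(e^{-t})f^2$ error is then absorbed by the elementary Lax--Phillips inequality (Lemma \ref{lem:lpi}) --- your ``ground-state substitution'' is the right move, but the conclusion must be stated at the level of forms with an exponentially decaying error, not as an exact mode decomposition. (Relatedly, the equidistant hypersurface you would cut along is a priori only $C^{1,1}$; one needs the Parkkonen--Paulin smoothing to get a smooth $\Gamma$-invariant convex set with bounded second fundamental form.) Second, in place of bracketing, the paper glues the cusp and funnel estimates with an overlapping partition of unity and controls the cross term on the non-compact overlap by showing $\|\nabla\theta\|^2=O(e^{-t_1}\varphi_1^2+e^{-t_2}\varphi_2^2)$ there, via smoothness of the partition up to the ideal boundary of the compactification (Lemma \ref{lem:nabf}); finiteness then follows abstractly from Rellich compactness (Proposition \ref{prop:main}). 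Without substitutes for these two ingredients --- the $e^{-t}$ control on the cusp--funnel overlap and a form-level replacement for separation of variables --- your proof does not close in the general geometrically finite case.
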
 
\begin{rem}
1. If $M$ has infinite volume, the result is optimal in the following sense. Under the same assumption as in Theorem \ref{the:main}, the interval $(-\infty,\delx]$ is contained in the essential spectrum of $\Delta$. This can be proved by using \cite[Theorem 4.4]{castillon2014harmonic} (or \cite{donnelly1981essential} and \cite[Prop 7.2]{borthwick2007spectral}). Together with our result, this implies that the essential spectrum of $\Delta$ is exactly $(-\infty,\delx]$ when the volume is infinite.

2. In the convex cocompact complex hyperbolic case, the meromorphic extension of the resolvent is already known from \cite[Appendix A7]{epstein1991resolvent}. The finiteness of the spectrum in the critical interval is a consequence of the meromorphic extension, as in \cite[Prop.7.3]{borthwick2007spectral}. The main idea is that an eigenvalue of the Laplace operator in the critical interval corresponds to a pole of the resolvent in $[0,\delta(X)/2)$.

3. In the general convex cocompact rank one case, the finiteness of the discrete spectrum is due to \cite{bunkeolbrich_2000_kleinian}. Later, they published a paper \cite{bunkeolbrich_2008_geometric} to treat the geometrically finite rank one case. But they made an additional assumption for quaternion case and they didn't treat the Cayley hyperbolic case. Our result is new in these two cases.
\end{rem}
Unless otherwise stated we assume that the manifolds, the Riemannian metrics and the functions are $C^\infty$ smooth. 
\section*{Acknowledgements}
The author wishes to express his gratitude to Jean-François Quint for many stimulating conversations and for his precious comments on a previous version of this
manuscript. The author is grateful to the referee for carefully reading the manuscript and for helpful remarks.
\section{Estimates for the spectrum on Riemannian manifolds}
\subsection{Barta's trick}
Here we will use Barta's trick, a way to estimate the bottom of the spectrum of the Laplace operator by one good function (see \cite[Lem.3.3]{roblin2015critical}). Recall that for every complete Riemannian manifold $\mani$, the bottom of the spectrum of the Laplace operator $\lambda_0(\mani)$ equals the infimum of the Rayleigh quotients $\cal R(f)=\frac{\int\|\nabla f\|^2\dd vol}{\int f^2\dd vol}$, over all nonzero smooth functions on $\mani$ with compact support. Hence it is natural to consider the quantity $\int (\|\nabla f\|^2-\lambda f^2)\dd vol$.
\begin{lem}\label{lem:barta}
	Let $u, \varphi$ be two real smooth functions on a Riemannian manifold $\mani$, the support of $u$ being compact. We have
	\begin{equation}\label{equ:barta}
	\int \|\nabla (\varphi u)\|^2\dd vol=\int \varphi^2\|\nabla u\|^2\dd vol-\int u^2\varphi\Delta\varphi\dd vol .
	\end{equation}
\end{lem}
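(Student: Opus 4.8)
The identity is an instance of the ground-state (Agmon--Allegretto--Piepenbrink) substitution, and the plan is to prove it by a direct computation combining the Leibniz rule with a single integration by parts, the latter being legitimate precisely because the compactness of $\supp u$ forces $u^2\varphi$ to have compact support, so no boundary term appears in Green's formula.

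First I would expand the gradient of the product. Since $\nabla(\varphi u)=\varphi\nabla u+u\nabla\varphi$, taking squared norms pointwise gives
\[
\|\nabla(\varphi u)\|^2=\varphi^2\|\nabla u\|^2+2u\varphi\langle\nabla u,\nabla\varphi\rangle+u^2\|\nabla\varphi\|^2 .
\]
After integration the first term on the right already matches the first term on the right-hand side of \eqref{equ:barta}, so the entire content of the lemma reduces to showing that the two remaining terms integrate to $-\int u^2\varphi\,\Delta\varphi\,\dd vol$.

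The key observation is that these two terms assemble into a single inner product with a gradient. Applying the Leibniz rule once more,
\[
\langle\nabla\varphi,\nabla(u^2\varphi)\rangle=u^2\|\nabla\varphi\|^2+2u\varphi\langle\nabla\varphi,\nabla u\rangle ,
\]
so that $\int\bigl(u^2\|\nabla\varphi\|^2+2u\varphi\langle\nabla u,\nabla\varphi\rangle\bigr)\,\dd vol=\int\langle\nabla\varphi,\nabla(u^2\varphi)\rangle\,\dd vol$. Now I would invoke Green's formula in the form $\int\langle\nabla f,\nabla g\rangle\,\dd vol=-\int g\,\Delta f\,\dd vol$ with $f=\varphi$ and $g=u^2\varphi$, which is valid here because $g=u^2\varphi$ is compactly supported; this turns the right-hand side into $-\int u^2\varphi\,\Delta\varphi\,\dd vol$. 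Adding back $\int\varphi^2\|\nabla u\|^2\,\dd vol$ yields \eqref{equ:barta}.

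There is no genuine obstacle in this computation; it is purely algebraic once the integration by parts is justified. The only two points requiring care are, first, the sign convention for the Laplacian (the paper uses the geometer's operator $\Delta=\mathrm{div}\,\nabla$, negative semidefinite, consistent with the spectrum lying in $(-\infty,0]$ and with the Rayleigh quotient $\cal R(f)=\frac{\int\|\nabla f\|^2\dd vol}{\int f^2\dd vol}$), so that Green's formula carries the minus sign appearing in \eqref{equ:barta}; and second, the vanishing of the boundary contribution in Green's formula, which is exactly where the hypothesis that $\supp u$ is compact enters and ensures $u^2\varphi$ is compactly supported, so the divergence theorem applies with no boundary term.
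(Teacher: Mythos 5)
Your proof is correct and follows essentially the same route as the paper's (which is left in a source comment, deferring to \cite[Lemma 3.3]{roblin2015critical}): expand $\|\nabla(\varphi u)\|^2$ by the Leibniz rule and eliminate the remaining terms by a single integration by parts justified by the compact support of $u$. The only cosmetic difference is bookkeeping --- you absorb both the cross term and $u^2\|\nabla\varphi\|^2$ into $\langle\nabla\varphi,\nabla(u^2\varphi)\rangle$ before applying Green's formula, whereas the paper integrates $\tfrac12\langle\nabla\varphi^2,\nabla u^2\rangle$ by parts and then cancels $u^2\|\nabla\varphi\|^2$ via the identity $\Delta\varphi^2=2\varphi\Delta\varphi+2\|\nabla\varphi\|^2$.
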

Please see \cite[Lemma 3.3]{roblin2015critical}
%
\begin{prop}\label{prop:barta}
	Let $\mani$ be a Riemannian manifold. Let $f$ be a real smooth function on $\mani$ with compact support. Assume that for some $\lambda>0$ there exists a smooth function $\varphi$ such that $\varphi >0$ and $-\Delta\varphi\geq \lambda\varphi$. Then
	\begin{equation*}
	\int \|\nabla f\|^2\dd vol-\lambda\int f^2\dd vol\geq \int \varphi^2\|\nabla(f/\varphi)\|^2\dd vol.
	\end{equation*}
\end{prop}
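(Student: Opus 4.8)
The plan is to apply Lemma~\ref{lem:barta} with the specific choice $u = f/\varphi$. Since $\varphi$ is smooth and strictly positive and $f$ is smooth with compact support, the quotient $u = f/\varphi$ is again smooth with compact support (its support is contained in that of $f$), so it is a legitimate test function for the lemma. With this choice $\varphi u = f$, and the identity \eqref{equ:barta} reads
\[
\int \|\nabla f\|^2\dd vol = \int \varphi^2\|\nabla(f/\varphi)\|^2\dd vol - \int (f/\varphi)^2\,\varphi\,\Delta\varphi\,\dd vol .
\]

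Next I would simplify the last integrand. Writing $(f/\varphi)^2\varphi\,\Delta\varphi = f^2\,\dfrac{\Delta\varphi}{\varphi}$, the hypothesis $-\Delta\varphi \geq \lambda\varphi$ together with $\varphi > 0$ gives the pointwise inequality $\dfrac{\Delta\varphi}{\varphi} \leq -\lambda$, and hence $-f^2\dfrac{\Delta\varphi}{\varphi} \geq \lambda f^2$ because $f^2 \geq 0$. Substituting this estimate into the identity above yields
\[
\int \|\nabla f\|^2\dd vol \geq \int \varphi^2\|\nabla(f/\varphi)\|^2\dd vol + \lambda\int f^2\dd vol ,
\]
and rearranging produces exactly the claimed inequality.

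The argument is essentially a one-line substitution once Barta's identity is in hand, so there is no serious obstacle. The only points that require care are the regularity and compact support of $f/\varphi$, both guaranteed by the strict positivity of $\varphi$, and keeping track of the sign when converting the differential inequality $-\Delta\varphi\geq\lambda\varphi$ into the pointwise bound on $\Delta\varphi/\varphi$. It is worth noting that $\varphi$ itself need not have compact support; this is precisely why Lemma~\ref{lem:barta} is applied through the compactly supported function $u=f/\varphi$ rather than directly to $\varphi$, so that the underlying Stokes' formula carries no boundary term.
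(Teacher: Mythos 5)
Your proposal is correct and coincides with the paper's own proof: both apply Lemma \ref{lem:barta} with $u=f/\varphi$ and then use the hypothesis $-\Delta\varphi\geq\lambda\varphi$ (with $\varphi>0$) to bound the term $-\int(f/\varphi)^2\varphi\Delta\varphi\,\dd vol$ from below by $\lambda\int f^2\,\dd vol$. Your added remarks on the smoothness and compact support of $f/\varphi$ are accurate but just make explicit what the paper leaves implicit.
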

\begin{proof}
	Applying Lemma \ref{lem:barta} with $u=f/\varphi$ implies
	\begin{align*}
	\int \|\nabla f\|^2\dd vol&=\int \varphi^2\|\nabla(f/\varphi)\|^2\dd vol-\int (f/\varphi)^2\varphi\Delta \varphi\dd vol\\
	&\geq \int \varphi^2\|\nabla(f/\varphi)\|^2\dd vol+\lambda\int f^2\dd vol.
	\end{align*}
	The proof is complete.
\end{proof}
\begin{rem}
	Compared with \cite[Proposition 3.2]{roblin2015critical}, we keep the last term $\int \varphi^2\|\nabla(f/\varphi)\|^2$. This term is important and will be exploited in Section \ref{sec:keynew}.
\end{rem}
The following proposition says that for a Riemannian manifold with quasi-warped product structure, the derivative of the volume density along the vertical geodesic can give us a good control of the bottom of the spectrum.
\begin{prop}\label{prop:product}
	Let $\mani=L\times \bb R$ be a Riemannian manifold with the metric given by 
	\begin{equation}\label{equ:gddt}
	g=\dd t^2+g(x,t),
	\end{equation}
	where $g(x,t)$ is a metric on $L\times\{ t\}$. We call this a quasi-warped product metric.
	
	The Riemannian volume element can be written as $\omega=\dd t\times h(x,t)\dd x$, where $\dd x$ is the volume element on $L\times \{ 0\}$ and $h$ is a density function from $L\times \bb R$ to $\bb R_{\geq 0}$ . Assume that for some $\lambda>0$ and a nonnegative function $\err$ on $\bb R$, we have 
	\begin{equation}\label{equ:parth}
	\partial_t h\geq 2\lambda(1-\err(t)) h.
	\end{equation}
	Then for $f\in C_c^\infty(\mani)$, we have
	\begin{equation}\label{equ:product}
	\int \|\nabla f\|^2\dd vol-\lambda^2\int(1-2\err(t)) f^2\dd vol\geq \int e^{-2\lambda t}\|\nabla(fe^{\lambda t})\|^2\dd vol.
	\end{equation}
\end{prop}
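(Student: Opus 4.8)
The plan is to apply Barta's identity (Lemma \ref{lem:barta}) with the explicit test function $\varphi = e^{-\lambda t}$, which is smooth and strictly positive on $\mani$. This choice is forced by the right-hand side of \eqref{equ:product}: since $\varphi^2 = e^{-2\lambda t}$ and $f/\varphi = f e^{\lambda t}$, the term $\int \varphi^2\|\nabla(f/\varphi)\|^2\dd vol$ that Lemma \ref{lem:barta} produces is literally $\int e^{-2\lambda t}\|\nabla(fe^{\lambda t})\|^2\dd vol$. I would emphasize at the outset that one cannot simply quote Proposition \ref{prop:barta}: the differential inequality that $\varphi$ satisfies will carry the $t$-dependent, possibly sign-changing coefficient $\lambda^2(1-2\err(t))$ in place of a positive constant, so I go back to the identity \eqref{equ:barta} and keep the estimate pointwise until the final integration.

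First I would compute $\Delta\varphi$. Because the metric \eqref{equ:gddt} has no cross terms between $t$ and the $L$-directions and the Riemannian volume density is $h$, the Laplace--Beltrami operator acting on a function of $t$ alone reduces to
\[
\Delta\varphi = \frac{1}{h}\,\partial_t\big(h\,\partial_t\varphi\big) = \partial_t^2\varphi + \frac{\partial_t h}{h}\,\partial_t\varphi .
\]
For $\varphi = e^{-\lambda t}$ this gives $-\Delta\varphi = \lambda e^{-\lambda t}\big(\tfrac{\partial_t h}{h}-\lambda\big)$. Feeding in the hypothesis \eqref{equ:parth} in the form $\partial_t h/h \geq 2\lambda(1-\err(t))$ and using $\lambda>0$, $\varphi>0$ yields the pointwise differential inequality
\[
-\Delta\varphi \;\geq\; \lambda^2\big(1-2\err(t)\big)\,\varphi .
\]

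Finally I would substitute $u = f/\varphi$ into \eqref{equ:barta}, giving $\int\|\nabla f\|^2\dd vol = \int \varphi^2\|\nabla(f/\varphi)\|^2\dd vol - \int (f/\varphi)^2\,\varphi\Delta\varphi\dd vol$. Multiplying the displayed inequality by $(f/\varphi)^2\geq 0$ and by $\varphi>0$ gives $-(f/\varphi)^2\,\varphi\Delta\varphi \geq \lambda^2(1-2\err(t))\,f^2$ pointwise; integrating this and rearranging is exactly \eqref{equ:product}. The whole argument is a direct computation, and the only genuinely delicate point—what I would flag as the main obstacle—is that $1-2\err(t)$ need not be positive, so the estimate must be propagated as a signed pointwise inequality (never as a bound on absolute values) right up to the moment of integration; respecting this is what makes the non-constant coefficient harmless and lets the term $\int\varphi^2\|\nabla(f/\varphi)\|^2\dd vol$ survive on the right-hand side.
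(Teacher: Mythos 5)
Your proof is correct and follows essentially the same route as the paper: the same test function $\varphi=e^{-\lambda t}$, the same reduction of $\Delta\varphi$ to $\frac{1}{h}\partial_t(h\,\partial_t\varphi)$ via the quasi-warped structure, the same pointwise inequality $-\Delta\varphi\geq\lambda^2(1-2\err(t))\varphi$, and the same final application of Lemma \ref{lem:barta} with $u=f/\varphi$. Your remark about keeping the estimate as a signed pointwise inequality (rather than invoking Proposition \ref{prop:barta}, whose hypothesis demands a constant $\lambda>0$) is exactly the point the paper's proof also implicitly handles.
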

\begin{rem}
	The terminology quasi-warped product is a generalization of warped product in Riemannian geometry. Let $(\mani_1,g_1),(\mani_2,g_2)$ be two Riemannian manifolds. The warped product $\mani_1\times_f\mani_2$ is the product manifold $\mani_1\times \mani_2$ equipped with the warped product metric given by 
	\[g=g_1+fg_2, \text{ where }f \text{ is a positive function on }\mani_1. \]
\end{rem}
\begin{proof}
	Let $\varphi(x,t)=e^{-\lambda t}$. We will prove that $-\Delta\varphi\geq \lambda^2(1-2\err(t))\varphi$ on $L\times \bb R$. It is sufficient to prove this inequality locally.
	
	Take a local chart on $L$ by $\phi:\bb R^n\supset U\rightarrow L$. Then $\tilde{\phi}=(\phi, Id):U\times \bb R\rightarrow L\times \bb R$ is a local chart on $\mani$. We write elements in $U\times \bb R$ by $(x_1,\dots, x_n, x_{n+1})$ with $x_{n+1}=t$. By the definition of $\Delta$, we have
	\begin{align*}
	\Delta\varphi=\frac{1}{\sqrt{|\det(g)|}}\sum_{1\leq i,j\leq n+1}\partial_i(\sqrt{|\det(g)|}g^{ij}\partial_j \varphi),
	\end{align*}
	where $g^{ij}$ is the inverse matrix of the matrix of metric. By definition, we have $$h(x,t)=\sqrt{|\det (g_{x,t})|}.$$
	Since $\varphi$ is a function only depending on $t$ and the formula of metric implies $g^{tj}=0$ when $j\neq t$, we have
	\begin{align}\label{equ:deltavarphi}
	\Delta\varphi=\frac{1}{\sqrt{|\det(g)|}}\partial_t(\sqrt{|\det(g)|}\partial_t \varphi)=\frac{1}{h}\partial_t(h\partial_t \varphi)=\partial_{tt}\varphi+\frac{\partial_th\partial_t\varphi}{h}.
	\end{align}
	Due to $\partial_th\geq 2\lambda(1-\err(t)) h$ and $\partial_t\varphi=-\lambda\varphi<0$, we have
	\begin{align*}
	-\Delta\varphi\geq -\partial_{tt}\varphi-2\lambda(1-\err(t)) \partial_t\varphi=\lambda^2(1-2\err(t))\varphi.
	\end{align*}
	Applying Lemma \ref{lem:barta} with this $\varphi$ and $u=f/\varphi$, we obtain the result.
\end{proof}
\subsection{The Lax-Phillips inequality }
\label{sec:keynew}
Compared with \cite{hamenstadt2004}, where she fully used the information from the spectrum of the Laplace operator on each component and the negative curvature, the key new input in our article is the observation that the particular form of the Laplace operator gives us more information. A version on $\bb R$ is as follows:
\begin{prop}\label{prop:lapr}
	Let $\Delta=\partial_{tt}$ be the standard Laplace operator on $\bb R$. For every $C>0$, the positive spectrum of $\Delta+Ce^{-|t|}$ on $L^2(\bb R)$ is finite.
\end{prop}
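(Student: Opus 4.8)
The plan is to combine the fact that $Ce^{-|t|}$ is a relatively compact perturbation of $\Delta=\partial_{tt}$ with a variational (min-max) count, reducing the global problem to three one-dimensional pieces on which the spectrum can be controlled by hand. First I would record that multiplication by $V(t)=Ce^{-|t|}$ is relatively compact with respect to $\partial_{tt}$ (it is bounded and tends to $0$ at infinity while $(\partial_{tt}-z)^{-1}$ is smoothing), so that $\sigma_{\mathrm{ess}}(\Delta+V)=\sigma_{\mathrm{ess}}(\Delta)=(-\infty,0]$. Consequently the positive spectrum consists only of isolated eigenvalues of finite multiplicity, and since $V\le C$ forces $\langle(\Delta+V)f,f\rangle\le C\|f\|^2$, these eigenvalues all lie in $(0,C]$ with $0$ as their only possible accumulation point. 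Finiteness is therefore equivalent to bounding their number.

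Second, I would cut $\bb R$ into the compact interval $(-R,R)$ and the two tails $(-\infty,-R)$ and $(R,\infty)$, imposing Neumann conditions at $\pm R$. Neumann bracketing enlarges the form domain and hence can only increase the number of positive eigenvalues, so the total count $N$ for $\Delta+V$ is at most the sum of the counts for the three decoupled operators. On $(-R,R)$ the Neumann Laplacian has compact resolvent and $V$ is bounded, so this middle piece has purely discrete spectrum and contributes only finitely many positive eigenvalues.

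Third — and this is the heart of the matter — on each tail I would apply the ground-state substitution of Lemma \ref{lem:barta}. The point is to produce an explicit positive supersolution: on $(R,\infty)$, for $R$ large enough (e.g.\ $R\ge 2\ln(4C)$), the function $\varphi(t)=1-e^{-t/2}$ satisfies $\varphi>0$ and $\varphi''+Ce^{-t}\varphi\le 0$. Writing $f=\varphi u$ and integrating by parts gives
\[
\int_R^\infty |f'|^2\,\dd t-C\int_R^\infty e^{-t}f^2\,\dd t=\int_R^\infty \varphi^2|u'|^2\,\dd t-\int_R^\infty \varphi(\varphi''+Ce^{-t}\varphi)u^2\,\dd t-\frac{\varphi'(R)}{\varphi(R)}f(R)^2 ,
\]
where the boundary term at $+\infty$ vanishes because form-domain functions decay. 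The first two terms on the right are nonnegative, so the Neumann tail form equals a nonnegative form minus the rank-one form $f\mapsto \frac{\varphi'(R)}{\varphi(R)}f(R)^2$. Subtracting a nonnegative rank-one form increases the number of negative directions by at most one, so each tail contributes at most one positive eigenvalue. Summing the three contributions bounds $N$ by a finite number, which is the claim.

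The main obstacle is precisely this tail analysis. The naive hope that the form is nonnegative on a half-line fails (constant functions already give a negative value), so one cannot simply discard the tails; moreover, since any positive supersolution of $\varphi''+Ce^{-t}\varphi\le 0$ is forced to be concave and hence increasing, the Neumann boundary term at $R$ carries the unfavorable sign and cannot be absorbed into the good term (the relevant weighted Hardy integral $\int_R^\infty \varphi^{-2}\,\dd t$ diverges). Recognizing that this defect is exactly a rank-one correction — costing at most one eigenvalue per end — is the step that converts the exponential decay of the potential into finiteness; it is the one-dimensional shadow of the Bargmann-type bound $\#\{\text{bound states}\}\le 1+\int |t|\,V(t)\,\dd t<\infty$.
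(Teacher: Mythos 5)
Your proof is correct, but it takes a genuinely different route from the paper's. The paper deduces Proposition \ref{prop:lapr} from the abstract criterion of Proposition \ref{prop:main}: the Lax--Phillips inequality (Lemma \ref{lem:lpi}, proved by the elementary Newton--Leibniz/Cauchy--Schwarz estimate of Lemma \ref{lem:derivative}) shows that the form $\int(\|\nabla f\|^2-Ce^{-|t|}f^2)\,\dd t$ becomes coercive after adding $C_1\int_U f^2$ over a single compact interval $U$, and then the Rellich compactness of $H^1(U)\hookrightarrow L^2(U)$ forces the positive spectral subspace of $\Delta+Ce^{-|t|}$ to be finite dimensional. You instead run the classical one-dimensional Schr\"odinger argument: relative compactness of the potential to identify $\sigma_{\mathrm{ess}}$, Neumann bracketing at $\pm R$, an explicit positive supersolution $\varphi(t)=1-e^{-t/2}$ on each tail (your verification of $\varphi''+Ce^{-t}\varphi\le 0$ for $t\ge 2\ln(4C)$ is correct; just take $R\ge\max\{1,2\ln(4C)\}$ so that $\varphi>0$), and the observation that the Neumann boundary term is a rank-one defect costing at most one eigenvalue per end; the ground-state identity and the rank-one count both check out, modulo the routine remark that the identity should be verified on the core of smooth functions compactly supported in $[R,\infty)$ and extended using the $H^1$-continuity of $f\mapsto f(R)$. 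What each approach buys: yours is more quantitative, yielding an explicit Bargmann-type bound on the number of positive eigenvalues (roughly $2R\sqrt{C}/\pi+3$), but it leans on strictly one-dimensional devices --- boundary evaluation as a rank-one form, explicit supersolutions --- whereas the paper's detour through Proposition \ref{prop:main} is precisely what generalizes: the same LPI-plus-Rellich scheme proves Theorem \ref{the:main} on geometrically finite rank one manifolds, where your tails become cusps and the complement of the convex core, and no rank-one reduction of the interface term is available. One cosmetic caveat: your parenthetical claim that $\int_R^\infty\varphi^{-2}\,\dd t$ diverges for every positive supersolution is not quite accurate --- the equation $\varphi''+Ce^{-t}\varphi=0$ has a positive solution of linear growth (a Bessel function $-Y_0(2\sqrt{C}e^{-t/2})$ in disguise), for which that integral converges and the absorption attempt fails only at the borderline constant --- but this remark is purely motivational and does not affect the validity of your proof.
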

This is a classical result in spectral theory (see \cite[Thm 8.5.1]{davies1996spectral} for a similar version, which says that the positive spectrum is at most countable). In Davies' book, the result comes from a compact perturbation. Another way to prove this type of result is to use the following observation, because the exponential function $e^{-|t|}$ has a very rapid decay. We will give a proof of Proposition \ref{prop:lapr} in Section \ref{sec:finite}.
\begin{lem}\label{lem:lpi}
	For every $C_0>0$, there exist a compact interval $U$ and $C_1>0$ such that for every real smooth, compactly supported function $f$ on $\bb R$ we have
	\begin{align}\label{ineq:lpi}
	\int \|\nabla f\|^2+C_1\int_Uf^2\geq C_0\int e^{-|t|}f(t)^2\dd t.
	\end{align}
\end{lem}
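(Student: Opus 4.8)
The plan is to prove the inequality
\[
\int \|\nabla f\|^2 + C_1\int_U f^2 \geq C_0 \int e^{-|t|} f(t)^2\,\dd t
\]
by exploiting the rapid decay of $e^{-|t|}$ together with a Hardy-type control of $f$ away from the origin. The key intuition is that since $e^{-|t|}$ decays exponentially, the weighted $L^2$ mass on the tails $|t|\geq R$ can be absorbed into $\int\|\nabla f\|^2$, while the mass on a bounded interval $U=[-R,R]$ can be crudely bounded by $C_1\int_U f^2$. So I would first split the right-hand side as $\int_{|t|\leq R} e^{-|t|}f^2 + \int_{|t|>R} e^{-|t|}f^2$. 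The first piece is immediately $\leq \int_U f^2$ (since $e^{-|t|}\leq 1$), which is handled by the $C_1$ term for any $C_1\geq C_0$.

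The heart of the argument is controlling the tail $\int_{|t|>R} e^{-|t|}f^2$ by $\int\|\nabla f\|^2$. First I would reduce to the half-line $t>R$ by symmetry and treat that side (the $t<-R$ side being identical). On $(R,\infty)$, I would use the fundamental theorem of calculus: since $f$ has compact support, for $t>R$ we can write $f(t) = -\int_t^\infty f'(s)\,\dd s$, so that by Cauchy--Schwarz with a suitable weight,
\[
f(t)^2 = \Bigl(\int_t^\infty f'(s)\,\dd s\Bigr)^2 \leq \Bigl(\int_t^\infty e^{s}\,\dd s\Bigr)\Bigl(\int_t^\infty e^{-s}f'(s)^2\,\dd s\Bigr) \leq e^{t}\int_t^\infty e^{-s}f'(s)^2\,\dd s.
\]
Multiplying by $e^{-t}$ and integrating in $t$ over $(R,\infty)$, then swapping the order of integration via Fubini, the weight collapses so that
\[
\int_R^\infty e^{-t}f(t)^2\,\dd t \leq \int_R^\infty \Bigl(\int_R^s \dd t\Bigr) e^{-s}f'(s)^2\,\dd s \leq \int_R^\infty (s-R)e^{-s}f'(s)^2\,\dd s,
\]
and since $(s-R)e^{-s}$ is bounded by a constant depending only on $R$, this is at most a constant times $\int\|\nabla f\|^2$. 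Choosing $R$ large enough makes that constant as small as we like, in particular smaller than $1/C_0$ after accounting for the factor $C_0$.

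I expect the main obstacle to be bookkeeping the constants so that the final inequality holds with a single $C_0$ on the tail and the residual mass absorbed cleanly. Concretely, the tail estimate produces a bound like $C_0\int_{|t|>R} e^{-|t|}f^2 \leq K(R)\int\|\nabla f\|^2$ where $K(R)\to 0$ as $R\to\infty$; I need $K(R)\leq 1$, which fixes the choice of the interval $U=[-R,R]$, and then set $C_1 = C_0$ to dominate the near part. The one subtlety worth care is that the Cauchy--Schwarz weight must be chosen so the resulting integral of the weight against $e^{-s}$ is finite and the Fubini swap is justified for compactly supported $f$; the exponential weight $e^{s}$ above is exactly tuned so that $e^{-t}\cdot e^{t}=1$ and the double integral telescopes, which is what forces $K(R)$ to decay. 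Once the weight is pinned down the rest is a routine verification.
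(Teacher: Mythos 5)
There is a genuine gap, and it occurs exactly at the step you flagged as the heart of the argument. First, the weighted Cauchy--Schwarz computation is wrong as written: $\int_t^\infty e^{s}\,\dd s$ diverges (you appear to have confused it with $\int_t^\infty e^{-s}\,\dd s=e^{-t}$), so the chain does not yield the pointwise bound $f(t)^2\le e^{t}\int_t^\infty e^{-s}f'(s)^2\,\dd s$. That bound is in fact false: if $f\equiv 1$ on a neighbourhood of $t$ and the support of $f$ ends near some $S\gg t$ with $|f'|\le 1$ on the ramp, the right-hand side is of order $e^{t-S}$, arbitrarily small, while the left-hand side is $1$. Second, and more fundamentally, no choice of weight can repair the strategy of anchoring the Newton--Leibniz formula at $+\infty$, because the tail estimate you are aiming for, namely $C_0\int_{|t|>R}e^{-|t|}f^2\,\dd t\le K(R)\int\|\nabla f\|^2$ with no term involving values of $f$, is false for every fixed $R$ and $K$. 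Take $f$ smooth with $f\equiv 1$ on $[-L,L]$, $L> R+1$, descending to $0$ on either side along ramps of slope $\epsilon$ over length $1/\epsilon$: then
\begin{equation*}
\int_{|t|>R}e^{-|t|}f^2\,\dd t\;\ge\; 2\left(e^{-R}-e^{-L}\right)\;\ge\; e^{-R},
\qquad
\int_{\bb R}\|\nabla f\|^2\,\dd t\;\approx\; 2\epsilon\;\longrightarrow\;0 ,
\end{equation*}
so the tail mass stays bounded below while the Dirichlet energy vanishes. The moral is that the far part of $\int e^{-|t|}f^2$ cannot be paid by the gradient term alone; it must also borrow from the values of $f$ on a compact window, i.e.\ from the $C_1\int_U f^2$ term.

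This is precisely what the paper's proof arranges via Lemma \ref{lem:derivative}: one writes $f(t)=f(r)+\int_r^t f'(s)\,\dd s$ with a \emph{finite} anchor $r$, accepting the boundary term $f(r)^2e^{-r}$. Cauchy--Schwarz is then applied over the finite interval $[r,t]$, producing the weight $(1+t-r)e^{-t}$, which (unlike your weight) is bounded and satisfies $(1+t-r)e^{-t}\le e^{-T}\le 1/C_0$ for $t\ge r\ge T$ with $e^{T}\ge C_0$. Finally, the boundary term is averaged over $r\in[T,T+1]$, turning it into $\int_T^{T+1}f(r)^2e^{-r}\,\dd r$, which is absorbed into $C_1\int_U f^2$ with $U=[0,T+1]$. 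Once you reinstate this boundary term and the finite anchor, your Fubini bookkeeping becomes correct and you recover the paper's argument; without it, the decomposition ``near part to $C_1$, tail to the gradient'' is structurally impossible.
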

	We call \eqref{ineq:lpi} the Lax-Phillips inequality (LPI). This is a consequence of the following elementary lemma.
\begin{lem}\label{lem:derivative}
		If $g$ is a real smooth bounded function on $\bb R$, then for all $r$ in $\bb R_{\geq 0}$ we have
		\begin{equation}\label{ineq:derivative}
		\int_{r}^{\infty}g(t)^2e^{-t}\dd t\leq 2\left(  g(r)^2e^{-r}+\int_{r}^{\infty}g'(t)^2(1+t-r)e^{-t}\dd t\right) .
		\end{equation}
\end{lem}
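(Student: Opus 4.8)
The plan is to peel off the boundary value $g(r)$ and reduce the statement to a sharp weighted Hardy inequality for functions vanishing at $r$. I would set $h(t)=g(t)-g(r)$, so that $h(r)=0$, $h'=g'$, and $h$ is bounded. Using the elementary bound $(g(r)+h)^2\le 2g(r)^2+2h^2$ together with $\int_r^\infty e^{-t}\dd t=e^{-r}$, I obtain
\begin{equation*}
\int_r^\infty g(t)^2 e^{-t}\dd t\le 2g(r)^2 e^{-r}+2\int_r^\infty h(t)^2 e^{-t}\dd t .
\end{equation*}
Comparing with \eqref{ineq:derivative}, it therefore suffices to prove the weighted Hardy inequality
\begin{equation*}
\int_r^\infty h(t)^2 e^{-t}\dd t\le \int_r^\infty h'(t)^2\,(1+t-r)\,e^{-t}\dd t
\end{equation*}
for every smooth bounded $h$ with $h(r)=0$. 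After the translation $t\mapsto t-r$ this is the case $r=0$, so there is no loss in working there.

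The core of the argument is the completion-of-square identity
\begin{equation*}
\int_r^\infty (1+t-r)h'(t)^2 e^{-t}\dd t-\int_r^\infty h(t)^2 e^{-t}\dd t=\int_r^\infty (1+t-r)\left(h'(t)-\frac{h(t)}{t-r}\right)^2 e^{-t}\dd t ,
\end{equation*}
whose right-hand side is manifestly nonnegative and thus yields the Hardy inequality at once. To verify the identity I would expand the square on the right; the $(h')^2$ terms cancel, $2h'h=(h^2)'$, and the claim reduces to
\begin{equation*}
\int_r^\infty \frac{1+t-r}{t-r}\,(h^2)'(t)\,e^{-t}\dd t=\int_r^\infty\left(1+\frac{1+t-r}{(t-r)^2}\right)h(t)^2 e^{-t}\dd t .
\end{equation*}
Integrating the left-hand side by parts against the weight $w(t)=\frac{1+t-r}{t-r}e^{-t}=\left(1+\frac{1}{t-r}\right)e^{-t}$, one computes $w'(t)=-\left(1+\frac{1}{t-r}+\frac{1}{(t-r)^2}\right)e^{-t}$; the three pieces of $-w'$ match exactly the two terms on the right-hand side, the extra $-e^{-t}$ coming precisely from differentiating the factor $e^{-t}$. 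This is what pins down the constant $1$ in the Hardy inequality.

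The only subtlety, and the point I would check carefully, is the singularity of the multiplier $1/(t-r)$ at $t=r$. It is harmless because $h(r)=0$: smoothness gives $h(t)=O(t-r)$ and $h(t)/(t-r)\to h'(r)$ as $t\to r$, so the integrand $(1+t-r)\bigl(h'-h/(t-r)\bigr)^2e^{-t}$ stays bounded near $r$, and the boundary term $w(t)h(t)^2$ in the integration by parts vanishes at $t=r$ (where $w(t)h(t)^2\sim h'(r)^2(t-r)\to 0$) and at $t=\infty$ (where $h$ is bounded and $e^{-t}\to 0$). Once this boundary bookkeeping is settled, the algebra of the completion of square and of the single integration by parts is entirely routine, and combining the Hardy inequality with the first display gives \eqref{ineq:derivative} with the stated constant $2$.
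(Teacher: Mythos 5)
Your proof is correct, and it reaches the lemma by a genuinely different route than the paper. The paper writes $g(t)=g(r)+\int_r^t g'(s)\dd s$, applies the pointwise Cauchy--Schwarz bound $\left(\int_r^t g'(s)\dd s\right)^2\le (t-r)\int_r^t g'(s)^2\dd s$, and then integrates against $e^{-t}$, the weight $(1+s-r)e^{-s}$ emerging from Fubini via $\int_s^\infty (t-r)e^{-t}\dd t=(1+s-r)e^{-s}$. Note that your peel-off step is implicitly the same ($h(t)=\int_r^t g'$), so both arguments in fact establish the identical intermediate inequality $\int_r^\infty h^2e^{-t}\dd t\le\int_r^\infty(1+t-r)h'(t)^2e^{-t}\dd t$ for $h(r)=0$, with the same final constant $2$; the difference is purely in the mechanism: Cauchy--Schwarz plus Fubini in the paper, versus your ground-state substitution and single integration by parts. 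Your algebra checks out: with $w(t)=\frac{1+t-r}{t-r}e^{-t}$ one indeed has $w'(t)=-\left(1+\frac{1}{t-r}+\frac{1}{(t-r)^2}\right)e^{-t}$, which matches $1+\frac{1+t-r}{(t-r)^2}$ term by term, and your boundary analysis is the right one ($h(t)=O(t-r)$ near $t=r$ since $h(r)=0$ and $h$ is smooth; boundedness of $h$ kills the term at infinity). One point worth stating explicitly, though it is implicit in your write-up: when $\int_r^\infty(1+t-r)h'^2e^{-t}\dd t=\infty$ the inequality is vacuous, and when it is finite the three pieces of the expanded square are separately integrable (the cross term by Cauchy--Schwarz), so splitting the identity into the displayed reduction is legitimate. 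The paper's route is shorter and avoids the singular multiplier $1/(t-r)$ altogether; yours buys structural information, since the nonnegative remainder $\int_r^\infty(1+t-r)\left(h'-\frac{h}{t-r}\right)^2e^{-t}\dd t$ exhibits the weight $(1+t-r)$ as exactly the one for which the Hardy constant equals $1$, at the cost of the boundary bookkeeping you rightly flag.
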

\begin{proof}
By the Newton-Leibniz formula,
\begin{equation}\label{equ:gtr}
g(t)=g(r)+\int_{r}^{t}g'(s)\dd s.
\end{equation}
By the Cauchy-Schwarz inequality,
\begin{equation}\label{equ:gt2}
g(t)^2\leq 2g(r)^2+2\left(\int_{r}^{t}g'(s)\dd s\right)^2\leq 2g(r)^2+2(t-r)\int_{r}^{t}g'(s)^2\dd s.
\end{equation}
Combining \eqref{equ:gtr} and \eqref{equ:gt2}, we have
\begin{align*}
\int_{r}^{\infty}g(t)^2e^{-t}\dd t&\leq \int_{r}^{\infty}\left(2g(r)^2+2(t-r)\int_{r}^{t}g'(s)^2\dd s\right) e^{-t}\dd t\\
&=2\left( g(r)^2e^{-r}+\int_{r}^{\infty}g'(s)^2(1+s-r)e^{-s}\dd s\right) .
\end{align*}
The proof is complete.
\end{proof}
\begin{proof}[Proof of LPI (Lemma \ref{lem:lpi})]
	We divide $\bb R$ into $\bb R_{\geq 0}$ and $\bb R_{<0}$. By symmetry, we only need to prove the inequality on $\bb R_{\geq 0}$. Let $T$ be a large number such that $e^{T}\geq C_0$. Then by monotonicity, we have $(1+t-T)e^{-t}\leq 1/C_0$ for all $t\geq T$. Integrating \eqref{ineq:derivative} on $[T,T+1]$ with $g=f$, we have
	\[\int_{T+1}^{\infty}f(t)^2e^{-t}\dd t\leq \int_{T}^{T+1}f(t)^2e^{-t}\dd t+\frac{1}{C_0}\int_{T}^{\infty}f'(t)^2\dd t.\]
	The result follows by taking $U=[0,T+1]$ and $C_1=2C_0$.
\end{proof}
This idea is already utilized in Theorem 3.3 of \cite{laxphillips82}. The error term $\int e^{-|x|}f^2$ is not artificial, which will appear naturally when we estimate the spectrum on the complement of the convex core.
\begin{cor}\label{cor:lpi}
	Under the same assumption as in Proposition \ref{prop:product}, if there exists $C_0>0$ such that $e^{2\lambda t}/C_0\leq h(t)\leq C_0e^{2\lambda t}$ for $t\in\bb R$, then for every $C>0$, there exist a compact interval $I$ and a positive constant $C_1$ such that the following holds. For every smooth, compactly supported function $f$ on $L\times \bb R$ and every compact subset $K$ of $L$, we have
	\begin{equation*}
		\int (\|\nabla f\|^2-\lambda^2 f^2)\dd vol+C_1\int_{K\times I}f^2\dd vol\geq C\int_{K\times\bb R} e^{-|t|}f^2\dd vol.
	\end{equation*} 
\end{cor}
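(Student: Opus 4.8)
The plan is to substitute $g=fe^{\lambda t}$ and reduce everything, via the two-sided bound on $h$, to the one-variable inequality of Lemma \ref{lem:lpi} applied on each vertical fiber $\{x\}\times\bb R$. Write $H(t)=e^{-2\lambda t}h(t)$, so that the hypothesis is exactly $C_0^{-1}\leq H\leq C_0$. Expanding $\nabla(fe^{\lambda t})$ as in the proof of Proposition \ref{prop:product} and integrating the cross term by parts in $t$ gives the identity
\[
\int(\|\nabla f\|^2-\lambda^2 f^2)\dd vol=\int e^{-2\lambda t}\|\nabla g\|^2\dd vol+\lambda\int_L\int_{\bb R}H'(t)g^2\dd t\,\dd x .
\]
Since the metric \eqref{equ:gddt} is block diagonal, $\|\nabla g\|^2\geq(\partial_t g)^2$, and when the density dominates the model one, i.e. $\partial_t h\geq 2\lambda h$ so that $H'\geq 0$, the last term is nonnegative and may be dropped; together with $H\geq C_0^{-1}$ this yields $\int(\|\nabla f\|^2-\lambda^2 f^2)\dd vol\geq C_0^{-1}\int_L\int_{\bb R}(\partial_t g)^2\dd t\,\dd x\geq C_0^{-1}\int_K\int_{\bb R}(\partial_t g)^2\dd t\,\dd x$. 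On the target side the bound $H\leq C_0$ gives $\int_{K\times\bb R}e^{-|t|}f^2\dd vol=\int_K\int_{\bb R}e^{-|t|}g^2H\dd t\,\dd x\leq C_0\int_K\int_{\bb R}e^{-|t|}g^2\dd t\,\dd x$.

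It now suffices to compare the flat quantities $\int_K\int_{\bb R}(\partial_t g)^2\dd t\,\dd x$ and $\int_K\int_{\bb R}e^{-|t|}g^2\dd t\,\dd x$, which is exactly what Lemma \ref{lem:lpi} does fiberwise. Fixing the LPI constant to be $CC_0^2$ and applying the lemma to $t\mapsto g(x,t)$ produces a compact interval $U$ and a constant $C_1'$, both independent of $x$, with $\int_{\bb R}(\partial_t g)^2\dd t+C_1'\int_U g^2\dd t\geq CC_0^2\int_{\bb R}e^{-|t|}g^2\dd t$. Integrating over $K$ and dividing by $C_0$ gives
\[
C_0^{-1}\int_K\int_{\bb R}(\partial_t g)^2\dd t\,\dd x+C_0^{-1}C_1'\int_K\int_U g^2\dd t\,\dd x\geq CC_0\int_K\int_{\bb R}e^{-|t|}g^2\dd t\,\dd x\geq C\int_{K\times\bb R}e^{-|t|}f^2\dd vol .
\]
Choosing any compact interval $I\supseteq U$ and using $H\geq C_0^{-1}$ on $I$ gives $\int_K\int_U g^2\leq\int_K\int_I g^2\leq C_0\int_{K\times I}f^2\dd vol$, so the correction term is at most $C_1'\int_{K\times I}f^2\dd vol$; setting $C_1=C_1'$ and combining with the first paragraph closes the argument in the case $H'\geq 0$.

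The one delicate point is the cross term $\lambda\int_L\int H'g^2$, which is the exact amount by which Proposition \ref{prop:product} (with its factor $1-2\err(t)$) falls short of the corollary. In general the differential inequality only gives $H'\geq-2\lambda\err(t)H$, so discarding the cross term costs $2\lambda^2\int\err(t)f^2\dd vol$, and one must absorb this into the right-hand side. I expect this to be the main obstacle: the clean manipulation above has to be supplemented by a bound of the form $2\lambda^2\int\err f^2\dd vol\leq\varepsilon\int_{K\times\bb R}e^{-|t|}f^2\dd vol+(\text{compact correction})$, valid uniformly in $x$ and in the noncompact directions of $L$, where the region on which the correction lives must still be matched with $K\times I$. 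This is precisely where the decay of the deficit $\err$—available in the geometric situations to which the corollary is applied—must be used, and the freedom to enlarge the LPI constant past $CC_0^2$ in the second paragraph leaves exactly the room needed to absorb such an $\varepsilon$-term.
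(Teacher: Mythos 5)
Your route is the paper's route: the substitution $g=fe^{\lambda t}$, the fiberwise application of Lemma \ref{lem:lpi} with an inflated constant (uniform in $x$, since the interval and constant produced by the lemma depend only on the chosen LPI constant), and the conversion between $\dd vol$ and $\dd t\,\dd x$ through the two-sided bound $e^{2\lambda t}/C_0\leq h\leq C_0e^{2\lambda t}$. Your identity with the cross term $\lambda\int_L\int_{\bb R} H'(t)g^2\,\dd t\,\dd x$, $H=e^{-2\lambda t}h$, is correct (it is Lemma \ref{lem:barta} with $\varphi=e^{-\lambda t}$, computed as in the proof of Proposition \ref{prop:product}), and the constant bookkeeping in your first two paragraphs is right. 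So in the case $\partial_t h\geq 2\lambda h$, i.e. $c\equiv 0$ in \eqref{equ:parth}, your proof is complete and coincides with the paper's.

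The gap is that this is the only case you actually prove. For general $c$, your last paragraph correctly isolates the deficit $2\lambda^2\int c(t)f^2\,\dd vol$ --- exactly the amount by which Proposition \ref{prop:product} falls short --- but then only conjectures an absorption of the form $\varepsilon\int_{K\times\bb R}e^{-|t|}f^2\,\dd vol$ plus a compact correction, to be extracted from decay of $c$; you do not prove it, and you cannot from the corollary's stated hypotheses, which supply neither decay of $c$ nor any localization of $\supp f$ over $K$. What the paper does at this point should be made explicit: it runs Lemma \ref{lem:lpi} with the constant $C_2=C_0^2(C+2\lambda^2)$, so that after the measure conversion the right-hand side reads $(C+2\lambda^2)\int_{K\times\bb R}e^{-|t|}f^2\,\dd vol$, and the closing citation of Proposition \ref{prop:product} sets this \emph{fixed} margin $2\lambda^2\int_{K\times\bb R}e^{-|t|}f^2\,\dd vol$ against $2\lambda^2\int c(t)f^2\,\dd vol$; this closes the argument precisely when $\int_{L\times\bb R}c(t)f^2\,\dd vol\leq\int_{K\times\bb R}e^{-|t|}f^2\,\dd vol$, which is automatic in the cusp application, where $c\equiv0$ (Lemma \ref{prop:cusp}). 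In the only other application, where $c(t)=2e^{-t}$, the deficit is not absorbed inside the corollary at all: Lemma \ref{prop:complement} carries it explicitly as the term $-4\ell^2e^{-t}f^2$ in \eqref{ineq:complement0}, and it is swallowed only in Lemma \ref{lem:complement}, where the choice $K=\rho(M_2)$ forces $\supp f\subset\nexp(K\times\bb R_{\geq 0})$, so the deficit integral lives over the $K$-region and the constant $C=2(4\ell^2+1)$ dominates it. So your instinct is sound --- the cross term genuinely cannot be dismissed, and the mechanism for killing it is support localization over $K$ plus a fixed margin rather than an $\varepsilon$-absorption from decay --- but as written your proposal establishes the corollary only for $c\equiv 0$; to finish, either add the fixed-margin step under the conditions just described, or keep the deficit $2\lambda^2\int c(t)f^2\,\dd vol$ on the left-hand side of the conclusion, which is how the estimate is in effect used in the paper.
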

\begin{proof}
	Let $f_1=fe^{\lambda t}$.
	Due to the quasi-warped product structure of the Riemannian metric,
	\begin{equation}\label{equ:gra1}
	\|\nabla f_1\|^2=\|\nabla'f_1\|^2+\partial_t f_1^2\geq \partial_t f_1^2,
	\end{equation}
	where $\nabla'$ is the gradient on $L\times \{t \}$.
	Substituting \eqref{equ:gra1} into LPI (Lemma \ref{lem:lpi}), with the constant $C_2=C_0^2(C+2\lambda^2)$, implies that there exist $I$ compact, $C_1>0$ such that
	\begin{equation}\label{ineq:lpi1}
	\int_{\bb R} \|\nabla f_1(x,t)\|^2\dd t+C_1\int_If_1(x,t)^2\dd t\geq C_2\int_{\bb R} e^{-|t|}f_1^2(x,t)\dd t, \text{ everywhere on }L.
	\end{equation}
	Integrating \eqref{ineq:lpi1} over $K$ with respect to $\dd x$, we get
		\[\int_{K\times \bb R} \|\nabla f_1\|^2 \dd x\dd t+
		C_1\int_{K\times I}f_1^2\dd x\dd t\geq (C+2\lambda^2)\int_{K\times \bb R} e^{-|t|}f_1^2\dd x\dd t.\]
	Using $\dd vol=h(x,t)\dd t\dd x$ and $e^{2\lambda t}/C_0\leq h(t)\leq C_0e^{2\lambda t}$, we obtain
	\[\int_{K\times \bb R} \|\nabla f_1\|^2 e^{-2\lambda t}\dd vol+
	C_1\int_{K\times I}f^2\dd vol\geq (C+2\lambda^2)\int_{K\times \bb R} e^{-|t|}f^2\dd vol .\]
	This formula together with Proposition \ref{prop:product} implies the result.
\end{proof}
\section{Finiteness of the spectrum}
\label{sec:finite}
\begin{prop}\label{prop:main}
	Let $M$ be a complete Riemannian manifold. If for some smooth bounded function $\eigen\geq0$, there exists a compact subset $U$ with smooth boundary and $\epsilon, C_U>0$ such that the following holds. For any compact subset $V$ there is $\epsilon_V>0$ such that for all complex valued function $f\in C_c^\infty(M)$ we have
	\begin{equation}\label{ineq:main}
	\int (\|\nabla f\|^2-\eigen|f|^2)\dd vol+C_U\int_{U}|f|^2\dd vol\geq \epsilon\int_U \|\nabla f\|^2\dd vol+\epsilon_V\int_V|f|^2\dd vol.
	\end{equation}
	Then the positive spectrum of the operator $\opt =\Delta+\eigen$ consists of at most finitely many eigenvalues of finite multiplicities.
\end{prop}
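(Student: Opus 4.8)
The plan is to translate finiteness of the positive spectrum of $\opt=\Delta+\eigen$ into a bound on the dimension of the subspaces on which the quadratic form is negative, and then to extract that bound from the compactness of a Rellich embedding on $U$. Write $b(f)=\int(\|\nabla f\|^2-\eigen|f|^2)\dd vol$, so that the quadratic form of $\opt$ is $\langle \opt f,f\rangle=-b(f)$ and the left-hand side of \eqref{ineq:main} is exactly $b(f)+C_U\int_U|f|^2\dd vol$. By the min--max principle, $\dim\mathrm{Ran}\,\mathbf 1_{(0,\infty)}(\opt)$ equals the supremum of $\dim S$ over finite-dimensional subspaces $S$ of the form domain on which $-b=\langle\opt\cdot,\cdot\rangle$ is positive definite, i.e.\ $b|_{S\setminus\{0\}}<0$. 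Finiteness of this number is precisely the assertion that the positive spectrum consists of finitely many eigenvalues of finite multiplicity, since a finite-rank spectral projection onto $(0,\infty)$ leaves no room for essential spectrum there. As $M$ is complete, $C_c^\infty(M)$ is a form core, so after approximating a basis (strict positivity of $-b$ is preserved under small form-norm perturbation) it suffices to produce $N_0<\infty$ such that every $S\subseteq C_c^\infty(M)$ with $b|_{S\setminus\{0\}}<0$ satisfies $\dim S\le N_0$.

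Fix such an $S$. For $f\in S$ we have $b(f)\le 0$, so \eqref{ineq:main} collapses to $C_U\int_U|f|^2\dd vol\ge \epsilon\int_U\|\nabla f\|^2\dd vol+\epsilon_V\int_V|f|^2\dd vol$, yielding the two estimates
\begin{equation*}
\int_U\|\nabla f\|^2\dd vol\le \frac{C_U}{\epsilon}\int_U|f|^2\dd vol,\qquad \int_V|f|^2\dd vol\le\frac{C_U}{\epsilon_V}\int_U|f|^2\dd vol,
\end{equation*}
valid on $S$ for every compact $V$. Consider the restriction $R\colon S\to H^1(U)$, $Rf=f|_U$. The second estimate, applied to an exhaustion of $M$ by compact sets $V$, shows that $f|_U=0$ forces $f=0$, so $R$ is injective and $\dim R(S)=\dim S$; the first estimate shows that $R(S)$ is a subspace of $H^1(U)$ on which the Rayleigh quotient $\int_U\|\nabla g\|^2/\int_U|g|^2$ is bounded above by $C_U/\epsilon$, with a constant that does \emph{not} depend on $S$.

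To finish I would invoke compactness. Because $U$ is compact with smooth boundary, the embedding $H^1(U)\hookrightarrow L^2(U)$ is compact (Rellich), so the Neumann Laplacian on $U$, whose form is $\int_U\|\nabla g\|^2\dd vol$ on $H^1(U)$, has discrete spectrum $\nu_1\le\nu_2\le\cdots\to\infty$. Choosing $N_0$ with $\nu_{N_0+1}>C_U/\epsilon$ and applying the min--max principle for this operator, no $(N_0{+}1)$-dimensional subspace of $H^1(U)$ can have its Rayleigh quotient bounded above by $C_U/\epsilon$ throughout; since $R(S)$ is exactly such a subspace, $\dim S=\dim R(S)\le N_0$. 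As $N_0$ depends only on $U,C_U,\epsilon$, this is the desired uniform bound.

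I expect the crux to be this last step: converting the \emph{uniform} domination of the $H^1(U)$-norm by the $L^2(U)$-norm on the negative cone into a dimension bound, which is exactly where compactness of the Rellich embedding (hence smoothness of $\partial U$) is indispensable. The two auxiliary ingredients of \eqref{ineq:main} each play a clearly delimited role and are the points to handle with care: the gradient term $\epsilon\int_U\|\nabla f\|^2$ upgrades control of the $L^2(U)$-mass to control of the full $H^1(U)$-norm, without which Rellich yields nothing, while the localized term $\epsilon_V\int_V|f|^2$ with $V$ arbitrary is what forces injectivity of the restriction to $U$, so that the abstract dimension of $S$ genuinely survives inside $L^2(U)$. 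The one routine technical check is that \eqref{ineq:main}, stated for $f\in C_c^\infty(M)$, passes to the form domain under form-norm approximation, which holds because form convergence controls $\nabla f$ in $L^2(M)$ and hence all the localized terms on $U$ and $V$.
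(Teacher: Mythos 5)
Your proposal is correct, but it takes a genuinely different route from the paper's. The paper completes $C_c^\infty(M)$ under the norm given by $F(f)=E(f)+C_U\int_U|f|^2$ to obtain a Hilbert space $\mathcal{H}$, composes the restriction $\mathcal{H}\to H^1(U)$ with the compact Rellich embedding into $L^2(U)$ to get a compact self-adjoint operator $S^*S$, and deduces (Proposition \ref{prop:finicodimension}) that $E$ is positive definite on a subspace of finite codimension; it then invokes the decomposition $L^2=H_+\oplus H_-\oplus\ker T$ with $H_+\subset D\subset H^1_0(M)\subset\mathcal{H}$ (Proposition \ref{prop:selfadjoint}) and concludes $\dim H_+<\infty$ because $E<0$ on $H_+\setminus\{0\}$. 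You instead run Glazman's variational principle directly, bounding the dimension of any subspace of $C_c^\infty(M)$ on which the energy form is negative definite by the counting function of the Neumann Laplacian of $U$ at level $C_U/\epsilon$. The compactness input is identical (Rellich on $U$, which is exactly where smoothness of $\partial U$ enters), and the two auxiliary terms of \eqref{ineq:main} play the same roles in both arguments: the gradient term upgrades $L^2(U)$-control to $H^1(U)$-control (in the paper it makes the restriction $\mathcal{H}\to H^1(U)$ bounded), and the $\epsilon_V$-term supplies injectivity (of your restriction $R$ on $S$; in the paper, of the inclusion $j\colon H^1_0(M)\to\mathcal{H}$). What your route buys: an explicit quantitative bound, namely at most $N_0$ positive eigenvalues where $N_0$ is the number of Neumann eigenvalues of $U$ below $C_U/\epsilon$, and you avoid constructing $\mathcal{H}$ and verifying the somewhat delicate chain $H_+\subset D\subset H^1_0(M)\subset\mathcal{H}$. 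What it costs: you must correctly invoke the min--max characterization of $\mathrm{rank}\,\mathbf{1}_{(0,\infty)}(T)$ for the operator $T=\Delta+c$ bounded above, and carry out the form-core approximation step; both are standard, and your perturbation argument is sound since on a finite-dimensional subspace strict negativity of $E$ self-improves to $E(f)\le-\delta\|f\|^2$ by compactness of the unit sphere, hence survives small form-norm perturbations of a basis, but these points deserve the same explicit care you gave the Rellich step.
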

\begin{proof}[Proof of Proposition \ref{prop:lapr} from Proposition \ref{prop:main}]
	From Lemma \ref{lem:lpi}, we have
	\begin{equation*}
		\int \left(\|\nabla f\|^2-\frac{C_0}{4}e^{-|t|}f^2\right)\dd t+\frac{1}{2}C_1\int_Uf^2\geq \frac{1}{2}\int \|\nabla f\|^2+\frac{C_0}{4}\int e^{-|t|}f(t)^2\dd t.
	\end{equation*}
	Take $C_0=4C$ and $c(x)=Ce^{-|x|}$. Then $c(x)$ satisfies \eqref{ineq:main}. Therefore,
	Proposition \ref{prop:lapr} follows from Proposition \ref{prop:main}.
\end{proof}
This is a ``baby case" of the main result of this manuscript, whose proof will also follow from Proposition \ref{prop:main}. We will establish \eqref{ineq:main} for geometrically finite rank one locally symmetric manifolds in Section \ref{sec:geofinman}.
 
It remains to prove Proposition \ref{prop:main}. Recall some results in spectral theory:
\begin{defi}
	Let $H$ be a complex Hilbert space, $\opt $ be a linear operator with the domain of definition $D$, which is dense in $H$. We call $\opt $ self-adjoint if the domain of definition of its adjoint $\opt ^*$ satisfies $D^*=D$ and $\opt ^*$ equals $\opt $ on $D$, where $D^*$ is defined as
	\[D^*=\{f\in H|\exists C_f>0 \text{ such that }\forall g\in D,\ |( f,\opt g)|\leq C_f|g|_H \}. \]
	
	We call $T$ positive if for every nonzero $f$ in $D$, we have $(Tf,f)> 0$.
\end{defi}
\begin{prop}
	Let $M$ be a complete Riemannian manifold. Define the space $H^1_0(M)$ as the completion of $C^{\infty}_c(M)$ under the norm
	\begin{equation*}
	\| f\|_{H^1}=\| f\|_{L^2}+\|\nabla f\|_{L^2}.
	\end{equation*}
	Let the domain of the Laplace operator be
	\begin{equation*}
	D=\{ f\in H^1_0(M)|\ \Delta f\in L^2(M)\}.
	\end{equation*}
	Then $\Delta:D\subset L^2\rightarrow L^2$ is a self-adjoint operator.
\end{prop}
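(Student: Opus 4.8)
The plan is to recognise the operator described in the statement as exactly the Friedrichs extension of $-\Delta$ and to obtain its self-adjointness from the first representation theorem for closed nonnegative quadratic forms (Kato, or Reed--Simon). Concretely, I would study the form $q(f,g)=\int_M\langle\nabla f,\nabla g\rangle\dd vol$ rather than the operator directly, since the form approach is precisely what pins down the domain to $H^1_0(M)$ and produces a self-adjoint operator automatically.

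First I would set up the form. On $C^\infty_c(M)$, Green's formula gives $q(f,g)=-\int_M(\Delta f)g\dd vol$, so $-\Delta$ is symmetric with $(-\Delta f,f)=q(f,f)\ge 0$, and $q$ is a nonnegative symmetric form. The form norm $\|f\|_{L^2}^2+q(f,f)$ is comparable to the square of the $H^1$ norm in the statement, so the form domain, namely the completion of $C^\infty_c(M)$ for this norm, is exactly $H^1_0(M)$, and the form is closed by construction. A preliminary point to check is that this completion genuinely embeds in $L^2(M)$: if $f_n\to 0$ in $L^2$ while $\nabla f_n\to v$ in $L^2$, then testing against any compactly supported vector field $X$ gives $\int\langle v,X\rangle=\lim -\int f_n\,\mathrm{div}\,X=0$, so $v=0$; hence $f\mapsto(f,\nabla f)$ is well defined and $H^1_0(M)\subseteq L^2(M)$.

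By the representation theorem the closed nonnegative form $(q,H^1_0(M))$ is represented by a unique self-adjoint nonnegative operator $A$ with
\[
\mathcal D(A)=\{f\in H^1_0(M):\exists\, w\in L^2(M),\ q(f,\phi)=(w,\phi)\ \forall\phi\in H^1_0(M)\},\qquad Af=w.
\]
The main step, where I expect the real content to sit, is to identify $\mathcal D(A)$ with $D$ and $A$ with $-\Delta$. For $f\in H^1_0(M)$ the identity $q(f,\phi)=(-\Delta f,\phi)$ holds for all $\phi\in C^\infty_c(M)$ by definition of the distributional Laplacian; since $C^\infty_c(M)$ is dense in $H^1_0(M)$ and both sides of $q(f,\phi)=(w,\phi)$ are continuous in $\phi$ for the $H^1$ norm (using $|q(f,\phi)|\le\|\nabla f\|_{L^2}\|\nabla\phi\|_{L^2}$ and Cauchy--Schwarz), representability over all of $H^1_0(M)$ is equivalent to $\Delta f\in L^2(M)$, with $w=-\Delta f$. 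Hence $\mathcal D(A)=D$ and $A=-\Delta$, so $-\Delta$, and therefore $\Delta$, is self-adjoint on $D$.

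The genuine obstacle is exactly this matching of the abstract form-representable domain with the concrete condition $\Delta f\in L^2(M)$, together with the verification that $H^1_0(M)$ sits inside $L^2(M)$; once this is done the Friedrichs extension is self-adjoint with no further input, so completeness of $M$ is in fact not needed for the stated self-adjointness. What completeness does provide, and what I would remark on to connect with the surrounding theory, is essential self-adjointness of $\Delta|_{C^\infty_c(M)}$ via Gaffney/Chernoff-type cutoff arguments (whose construction relies on completeness), which guarantees that this is the \emph{only} self-adjoint realisation of the Laplacian and hence that the phrase ``the Laplace operator'' is unambiguous.
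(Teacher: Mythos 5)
Your proof is correct and matches the paper's route: the paper gives no argument of its own for this proposition, deferring entirely to \cite[Sec.~8.2]{taylor2}, and that reference establishes self-adjointness by exactly the Friedrichs-extension/form-representation argument you carry out --- the closed nonnegative form $q(f,g)=\int_M\langle\nabla f,\nabla g\rangle\,\dd vol$ with form domain $H^1_0(M)$, followed by the identification of the representable domain with $\{f\in H^1_0(M)\mid \Delta f\in L^2(M)\}$ via density of $C^\infty_c(M)$ and continuity of both sides in the $H^1$ norm. Your side remarks are also accurate: the embedding of the abstract completion $H^1_0(M)$ into $L^2(M)$ does require the check you give, and completeness of $M$ is indeed irrelevant to self-adjointness of this Friedrichs realisation (it only enters if one wants essential self-adjointness on $C^\infty_c(M)$, hence uniqueness of the self-adjoint extension).
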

(See \cite[Sec.8.2]{taylor2} for more details.) 
For $f\in C_c^\infty(M)$, define
\begin{align*}
K(f)=C_U\int_U|f|^2\dd vol,\ E(f)=\int(\|\nabla f\|^2-\eigen|f|^2)\dd vol, 
\end{align*} 
and $F(f)=E(f)+K(f)$.
Inequality \eqref{ineq:main} implies that $F$ is a positive definite quadratic form on $C^{\infty}_c(M)$. We define a Hilbert space $\cal H$ as the completion of $C_c^\infty(M)$ with respect to the norm $|\cdot |_F$, written as $\mathcal{H}=\overline{C^{\infty}_c(M)_F}$. 

For an open subset $V$ of $M$, we define $H^1(V)$ as the completion of $C^\infty(V)$ with respect to the norm $\|f\|_{H^1( V)}^2=\int_V|f|^2+\int_V\|\nabla f\|^2$.
\begin{prop}\label{prop:finicodimension}
	With the same assumption as in Proposition \ref{prop:main}, there exists a subspace $\cal H_1$ of finite codimension in $\cal H$,
	on which $E$ is positive definite.
\end{prop}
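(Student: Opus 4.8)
The plan is to realize $E=F-K$ as $(I-A)$ for a suitable compact self-adjoint operator $A$ on $\cal H$, and then discard the finitely many directions where $I-A$ fails to be positive.

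First I would check that $K$ extends to a bounded Hermitian form on $\cal H$. Polarize it to $K(f,g)=C_U\int_U f\bar g\,\dd vol$. Applying \eqref{ineq:main} with $V=U$ gives, for $f\in C_c^\infty(\mani)$,
\[ F(f)\;\geq\;\epsilon\int_U\|\nabla f\|^2\,\dd vol+\epsilon_U\int_U|f|^2\,\dd vol\;\geq\;\min(\epsilon,\epsilon_U)\,\|f\|_{H^1(U)}^2, \]
so the restriction map $f\mapsto f|_U$ extends to a bounded linear map $B_0:\cal H\to H^1(U)$; in particular $\|f\|_{L^2(U)}\leq\epsilon_U^{-1/2}|f|_F$. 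Hence $|K(f,g)|\leq C_U\epsilon_U^{-1}|f|_F\,|g|_F$, and by the Riesz representation theorem there is a bounded, self-adjoint, nonnegative operator $A$ on $\cal H$ with $K(f,g)=(Af,g)_F$. Then $E(f)=(f,f)_F-(Af,f)_F=((I-A)f,f)_F$.

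The crux is the compactness of $A$, and this is the step I expect to be the main obstacle (more precisely, justifying that the relevant maps are well defined on the abstract completion $\cal H$, whose elements are a priori only $F$-Cauchy classes). I would factor $A=B^*B$, where $B:\cal H\to L^2(U)$ is the composition of $B_0$ with the scaled inclusion $H^1(U)\hookrightarrow L^2(U)$, $\phi\mapsto\sqrt{C_U}\,\phi$; indeed $(Bf,Bg)_{L^2(U)}=C_U\int_U f\bar g\,\dd vol=K(f,g)$. Because $U$ is compact with smooth boundary, the Rellich--Kondrachov theorem makes $H^1(U)\hookrightarrow L^2(U)$ compact, so $B$ is compact and therefore $A=B^*B$ is compact. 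This is precisely where the hypotheses bite: the localization of $K$ to the \emph{compact} set $U$ together with the $H^1(U)$-coercivity of $F$ furnished by \eqref{ineq:main}.

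Finally I would apply the spectral theorem for the compact, self-adjoint, nonnegative operator $A$: its spectrum is $\{0\}$ together with eigenvalues of finite multiplicity accumulating only at $0$, and $\cal H$ admits an orthonormal eigenbasis $(e_i)$ with eigenvalues $\mu_i\geq 0$. Only finitely many $\mu_i$ (counted with multiplicity) satisfy $\mu_i\geq 1$; let $\cal H_0$ be the span of the corresponding $e_i$, a finite-dimensional subspace, and set $\cal H_1=\cal H_0^{\perp}$, which has finite codimension. For $0\neq f=\sum_i a_ie_i\in\cal H_1$ every contributing eigenvalue satisfies $\mu_i<1$, whence $E(f)=\sum_i(1-\mu_i)|a_i|^2>0$. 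Thus $E$ is positive definite on $\cal H_1$, completing the proof.
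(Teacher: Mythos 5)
Your proposal is correct and takes essentially the same route as the paper: both realize the localized form $K$ through the restriction map $\mathcal{H}\to H^1(U)\hookrightarrow L^2(U)$, invoke the Rellich theorem to get a compact self-adjoint operator (your $A=B^*B$ is, up to the factor $C_U$, the paper's $S^*S$, and your coercivity estimate from \eqref{ineq:main} with $V=U$ is what the paper asserts when it calls the restriction map bounded), and then discard the finitely many large eigendirections. The only cosmetic difference is your eigenvalue cutoff at $1$, which yields strict positivity of $E$ on $\mathcal{H}_1$, whereas the paper cuts at the level where $|K(f)|\leq\frac{1}{2}F(f)$ and so gets the slightly stronger bound $E(f)\geq\frac{1}{2}F(f)$; both suffice for the statement.
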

\begin{proof}
	Because of inequality \eqref{ineq:main}, we can define a bounded restriction map from $\cal H$ to $H^1(U)$ and compose it with the injection $\iota$ from $H^1(U)$ to $L^2(U)$, that is
	\begin{equation*}
	S:\cal{H}\rightarrow H^1(U)\stackrel{\iota}{\longrightarrow}L^2(U).
	\end{equation*}
	Let $S^*$ be the adjoint of $S$, then $S^*S:\cal{H}\rightarrow\cal{H}$ is a self-adjoint operator. For $f\in \cal H$, we have
	$$F(S^*Sf,f)=\int_U|f|^2\ \dd vol=\frac{1}{C_U}K(f).$$ 
	By the Rellich theorem \cite[Chapter 4, Proposition 4.4]{taylor1} and the smoothness of the boundary of $U$, the injection $\iota$ is compact. Therefore $S^*S$ is a compact self-adjoint operator. The set of eigenvalues has a unique accumulation point $0$. In a subspace of finite codimension in $\cal{H}$, we have $|K(f)|=C_U|F(S^*Sf,f)|\leq \frac{1}{2}|F(f)|$. Therefore
	$$E(f)=F(f)-K(f)\geq\frac{1}{2}F(f).$$
	The proof is complete.
\end{proof}
\begin{prop}\label{prop:selfadjoint}
	If $T$ is a self-adjoint operator on a Hilbert space $H$, then there is a decomposition
	\[H=H_+\oplus H_-\oplus \ker T \]
	such that $T$ preserves the decomposition, $T$ is self-adjoint on $H_+$, $H_-$ and $T$ is positive, negative on $H_+,H_-$, respectively.
	
	If there exists $\lambda>0$ such that $\lambda Id-T$ is positive, then $H_+$ in the above decomposition is actually in $D$, the domain of definition.
\end{prop}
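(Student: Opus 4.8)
The plan is to build the decomposition directly from the spectral theorem for the self-adjoint operator $T$. By the spectral theorem there is a projection-valued measure $E$ on the Borel subsets of $\mathbb{R}$ with $T=\int \mu\, dE(\mu)$ and $D=\{f : \int \mu^2\, d(E(\mu)f,f)<\infty\}$. I would set $H_+ = \mathrm{Ran}\,E((0,\infty))$, $H_-=\mathrm{Ran}\,E((-\infty,0))$ and $\ker T = \mathrm{Ran}\,E(\{0\})$. Since the three sets $(-\infty,0)$, $\{0\}$, $(0,\infty)$ partition $\mathbb{R}$ and $E$ is a projection-valued measure, the corresponding projections are mutually orthogonal and sum to the identity, which gives the orthogonal decomposition $H=H_+\oplus H_-\oplus\ker T$.

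Next I would verify the stated properties. Because every spectral projection $E(B)$ commutes with $T$, each of the three summands is a reducing subspace, so $T$ preserves the decomposition and its restriction to each summand is again self-adjoint for the induced domain. The sign statements follow from the functional calculus: for a nonzero $f\in H_+\cap D$ one has $(Tf,f)=\int_{(0,\infty)}\mu\,d(E(\mu)f,f)>0$, because $d(E(\mu)f,f)$ is a nonzero positive measure carried by $(0,\infty)$, where $\mu>0$. The same computation over $(-\infty,0)$ shows $T$ is negative on $H_-$, and $T$ vanishes identically on $\ker T=\mathrm{Ran}\,E(\{0\})$. The only mild subtlety here is to match the paper's definition of positivity, namely strict positivity on nonzero vectors of the domain, which the integral representation supplies immediately.

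For the second assertion, the hypothesis that $\lambda\,\mathrm{Id}-T$ is positive translates, via the definition of positivity, into $(Tf,f)\le\lambda|f|^2$ for all $f\in D$. For a self-adjoint operator this quadratic-form bound is equivalent to the spectral containment $\mathrm{spec}(T)\subseteq(-\infty,\lambda]$, hence $E((\lambda,\infty))=0$ and $H_+=\mathrm{Ran}\,E((0,\lambda])$. Consequently, for any $f\in H_+$ the spectral measure $d(E(\mu)f,f)$ is supported in the bounded set $(0,\lambda]$, so $\int\mu^2\,d(E(\mu)f,f)\le\lambda^2|f|^2<\infty$, which is exactly the condition defining membership in $D$. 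Therefore $H_+\subseteq D$.

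I expect the one point requiring genuine care to be the passage from the form bound $(Tf,f)\le\lambda|f|^2$ to the spectral bound $\mathrm{spec}(T)\subseteq(-\infty,\lambda]$, since this is where self-adjointness is used essentially; the analogous implication fails for a merely symmetric operator. Once that is in place, the domain membership $H_+\subseteq D$ is an immediate consequence of the square-integrability criterion furnished by the spectral theorem, and everything else reduces to bookkeeping with the projection-valued measure.
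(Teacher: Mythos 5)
Your proof is correct, and it follows the standard route: the paper itself offers no proof of this proposition, merely citing Lax's \emph{Functional Analysis} (Chapter 32, Theorem 1), and your argument via the projection-valued measure --- decomposing $H$ by the spectral projections of $(0,\infty)$, $\{0\}$, $(-\infty,0)$, and deducing $H_+\subseteq D$ from the form bound $(Tf,f)\le\lambda|f|^2$, which for a self-adjoint operator forces $\operatorname{spec}(T)\subseteq(-\infty,\lambda]$ and hence boundedness of the spectral support on $H_+$ --- is exactly the content behind that citation. You also correctly identify the one genuinely delicate step (passing from the quadratic-form bound to the spectral bound, valid for self-adjoint but not merely symmetric operators) and correctly match the paper's strict notion of positivity on nonzero vectors of the domain.
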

(See \cite[32 Thm1]{lax1} for more details.)
\begin{proof}[Proof of Proposition \ref{prop:main}]
	By definition,
	$$\|f\|_F\leq\|f\|_{L^2}+\|\nabla f\|_{L^2}=\|f\|_{H^1}.$$
	Thus we can extend the injection $C^{\infty}_c(M)\rightarrow\cal{H}$ to an application $j:H^1_0(M)\rightarrow\cal{H}$. By \eqref{ineq:main}, for any compact subset $V$ of $M$, we have $F(f)\geq \epsilon_V \int_V |f|^2$ for $f$ in $C_c^\infty(M)$. Therefore $j$ is injective and can be seen as an inclusion.
	
	Since $\Delta$ is self-adjoint and $c$ is bounded, the operator $\opt =\Delta+\eigen$ is also self-adjoint. Since $(\|c\|_\infty +1)Id-T$ is positive, by using Proposition \ref{prop:selfadjoint} with $\opt $ and $H=L^2(M)$, we have $H_+\subset D\subset H^1_0(M)\subset \cal H$. For a nonzero element $u$ in $H_+$, we have 
	\begin{align*}
	E(u)&=\int_M (\|\nabla u\|^2-\eigen|u|^2)\dd vol=\int_M-(\opt u)\bar u\dd vol=-(Tu,u)<0.
	\end{align*}
	Proposition \ref{prop:finicodimension} implies $\cal H_1\cap H_+=\{0\}$, therefore $H_+$ is of finite dimension. Due to Proposition \ref{prop:selfadjoint}, the operator $\opt $ is self-adjoint on $H_+$, hence the positive spectrum of $\opt $ is finite and each element is an eigenvalue of finite multiplicity. 
\end{proof} 
\section{Rank one locally symmetric manifolds}
We study the spectrum on cusps and complement of convex sets in this section, which will be used in Section \ref{sec:geofinman} to obtain global result.
\subsection{Real rank one globally symmetric spaces}
\label{sec:rankone}
Real rank one globally symmetric spaces are usually classified into four families: $\bb H^n_{\bb R}=SO_o(1,n)/SO(n) $, $ \bb H^n_{\bb C}=PU(1,n)/U(n)$, $\bb H^n_{\bb H}=PSp(1,n)/(Sp(1)\times Sp(n))$ and an exceptional one, the Cayley hyperbolic plane $\bb H^2_{\bb O}=F_4/Spin(9)$. For the first three types, we have a uniform treatment by projective models, in which the metric and the curvature can be computed explicitly (see for example \cite{mostow1973strong}, \cite{pansu1989carnot}, \cite{quintpatterson}). But the Cayley hyperbolic plane needs different model. 

The space $\hfn$ is a Riemannian manifold with pinched curvature between $-1$ and $-4$. 
The ideal boundary of the symmetric space $\hfn$, denoted by $\partial\hfn$, is the set of equivalent classes of geodesic rays. From now on, abbreviate $\hfn,\ \partial \hfn,\ \hfn\cup\partial\hfn$ to $X,\  X_I,\ X_c$. Equip $X_c$ with the topology such that $X_c$ becomes a compact manifold with boundary. The space $X$ is homeomorphic to an open ball of dimension $n\dim_{\bb R}\bb F$, and $X_c$ is homeomorphic to a closed ball of the same dimension. 
Let $\xi$ be a point in $X_I$, and let $x, y$ be two points in $X$. We define the Busemann function by
$$b_{\xi}(x,y)=\lim_{t\rightarrow+\infty}(d(x,\gamma(t))-d(y,\gamma(t))),$$
where $d$ is the distance induced by the Riemannian metric, and $\gamma(t)$ is a geodesic ray asymptotic to $\xi$. Let $o$ be a fixed reference point in $X$. The level sets of $b_\xi(\cdot,o)$ are called horospheres based at $\xi$. A horoball based at $\xi$ is a set $b_\xi(\cdot,o)^{-1}(-\infty,r]$.

We introduce the horospherical model for $ X$ following \cite[Section 9]{pansu1989carnot}. Fix a point $\infty$ of $X_I$. Let $G$ be the group of isometries of the symmetric space $X$.  Let $G=KAN$ be the Iwasawa decomposition with $A$ and $N$ fixing $\infty$, and $K$, a maximal compact subgroup, fixing $o$. The group $A$ is isomorphic to $\bb R$ and $N$ is a simply connected nilpotent Lie group. Let $\M=K\cap Stab_\infty(G)$. Here $\M$ is also the maximal subgroup of $K$ which commutes with $A$. Let $\cal N$ be the Lie algebra of $N$. The group $A$ normalizes $N$, and the conjugation action of $A$ induces an automorphism on $\cal N$. We make a choice of a particular generator of $A$ such that $\cal N$ admits a decomposition 
\[\cal N=V^1\oplus V^2, \]
where $V^j$ equals to $Ker(\rm{Ad}(a_t)-e^{jt})$ for $t\neq 0$ and $a_t$ is an element in $A$. Moreover, $V^2$ is the centre of $\cal N$ and $[V^1,V^1]=V^2$.
We can identify $V^1, V^2$ with $\bb F^{n-1}, \Im\bb F$, and the Lie algebra structure is given by
\[[(x_1,\dots,x_{n-1}),(y_1,\dots, y_{n-1})]=\Im \sum_{j=1}^{j=n-1}\bar{x_j}y_j. \] 
Starting from $V^1\oplus V^2$, we obtain two left invariant distributions $W^1,W^2$ on $N$.
Let $\alpha$ be the endomorphism on $\cal N$, which is the differential of $\rm{Ad}(a_t)$ at $t=0$, mapping vector $v$ in $V^j$ to $jv$. Let $e^\alpha$ be the induced automorphism on $N$. Let $c(t)$ be the geodesic ray starting from $o$ to $\infty$, that is $c(t)=a_to$. We have a diffeomorphism from $N\times A$ to $X$ given by $(\nu,t)\rightarrow \nu\cdot c(t)$, and the action of $A$ reads as $a_t(\nu,s)=(e^{t\alpha}\nu,t+s)$. Let $m=(n-1)\dim_{\bb R}\bb F$ and $q=\dim_{\bb R}\bb F-1$.
The hyperbolic metric on $X$ can be written as
\begin{equation}\label{equ:duamet}
	g=g_t\oplus \dd t^2,
\end{equation}
where these left invariant metrics $g_t$ on $N$ have, under the distribution $W^1,W^2$, a matrix of the form
\begin{equation*}
\begin{pmatrix}
	e^{2t}Id_{m} & 0 \\ 0 & e^{4t}Id_{q}
\end{pmatrix}.
 \end{equation*}
 
We also need to calculate the Riemann curvature tensor. Let $\normal$ be the unit tangent vector at $o$ to the geodesic $c(t)$. If $v\in V^1$, then $\normal,v$ are tangent to a totally real $2-$plane of curvature $-1$. If $v\in V^2$, then $\normal,v$ are tangent to a $\bb F-$line of curvature $-4$. Therefore
\begin{equation}\label{equ:riecur}
R(\normal,v_j,\normal)=-j^2v_j\text{ for }v_j\in V^j\ \ j=1,2.
\end{equation}

We introduce a half space model. As in the horospherical model, with $o$ in $X$ and $\infty$ in $X_I$ fixed, let $y=e^{-t}$. The coordinate map is replaced by a map from $N\times \bb R_{>0}$ to $X$, that is $(\nu,y)\rightarrow \nu\cdot c(\log y)$. 
We also give a differential structure to $X_c$. Locally the compactification is obtained by adding the hyperplane $\{ y=0 \}$. Locally, this also gives a differential structure to the manifold with boundary. If we fix another point in $X_I$, and give a differential structure by the same procedure, then by the compatibility of the differential structures, we have a differential structure on $X_c$.
\subsection{Discrete subgroups}
\label{sec:geofin}
Recall that $G$ is the group of isometries of the symmetric space $X$, and let $\Gamma$ be a discrete subgroup of $G$. The group $\Gamma$ may have torsion. In the geometrically finite case, the group $\Gamma$ is always finitely generated \cite[Prop5.5.1]{bowditch1995}. We can use a result of Selberg \cite{selberg60discontinuous}, passing to a normal subgroup $\Gamma'$ which is of finite index in $\Gamma$ and has no torsion. Since the spectrum of the Laplace operator on a finite covering space contains the spectrum of the original one, we suppose that $\Gamma$ is without torsion. 

 Fix a point $x$ in $X$, and let $\overline{\Gamma\cdot x}$ be the closure of the orbit $\Gamma\cdot x$ in $X_c$. The limit set of the group $\Gamma$ is defined by
$$\Lambda(\Gamma)=X_I\cap\overline{\Gamma\cdot x}.$$
This definition of the limit set is independent of the choice of $x$. Let $\Omega=X_I-\Lambda(\Gamma)$. Since the actions of $\Gamma$ on $X\cup\Omega$ and $X$ are properly discontinuous \cite[Prop3.2.6]{bowditch1995}, we set
$$M_c(\Gamma)=\Gamma\backslash(X\cup\Omega),\ M(\Gamma)=\Gamma\backslash X.$$ 
These are a manifold with boundary and a rank one locally symmetric manifold, respectively. 

In order to study the spectrum, we define an energy form, which will be used throughout the paper. Let
\begin{equation}\label{equ:energy}
E(f)=\int_{M(\Gamma)}(\|\nabla f\|^2-\ell^2f^2)\dd vol
\end{equation} 
for $f\in C_c^\infty(M(\Gamma))$, where $\Gamma$ is a discrete subgroup of $G$ and $\ell=\delta(X)/2=((n+1)\dim_{\bb R}\bb F-2)/2$ is half the volume entropy.
\subsection{Cusps}
\label{sec:cusreg}
\begin{defi}[Parabolic subgroup]
	Let $\Pi$ be a subgroup of $G$. We call $\Pi$ parabolic if the set of fixed points of $\Pi$ in $X_c$ consists of a unique point $\xi$ in $X_I$, and $\Pi$ preserves setwise every horosphere based at $\xi$.
\end{defi}
Let $\Pi$ be a discrete parabolic subgroup with fixed point $\infty$, a point in $X_I$. We use the horospherical model $ N\times A\rightarrow X$ introduced in Section \ref{sec:rankone}. Then $\Pi $ is a subgroup of $\M\ltimes N$. Recall that $\M$ is the subgroup of $K$ which preserves setwise every horosphere based at $\infty$ and the metric $g_t$ on $N$. The part $N$ of $\M\ltimes N$ acts as translation on $N$ and $\M$ acts as rotation. Let $\Pi\backslash N$ be the quotient space of $N$ under the action of $\Pi$. The quotient manifold satisfies
\[M(\Pi)\simeq(\Pi\backslash N)\times \bb R\text{ as a topologic space.} \]

We will apply Proposition \ref{prop:product} to $M(\Pi)$. Since $\Pi$ preserves the metric $g_t$, we have a quotient metric on $\Pi\backslash N$. The formula \eqref{equ:duamet} implies that the metric on $(\Pi \backslash N)\times \bb R$ is a quasi-warped product metric, and the volume element is equal to $e^{2\ell t}\dd t\dd\eta$, where $\dd\eta$ is the volume element on $(\Pi \backslash N)\times \{0 \}$. So the function $h$ in Proposition \ref{prop:product} equals $e^{2\ell t}$, and $h$ satisfies \eqref{equ:parth} with $\lambda=\ell$ and $c(t)=0$. Applying Proposition \ref{prop:product} and Corollary \ref{cor:lpi}, we obtain 
\begin{lem}\label{prop:cusp}
	Let $E$ be as in \eqref{equ:energy} with $\Gamma=\Pi$. For a function $f\in C_c^{\infty}((\Pi \backslash N)\times \bb R)$, we have 
	\begin{equation}\label{ineq:cusp0}
	E(f)=\int \|\nabla (e^{\ell t}f)\|^2e^{-2\ell t}\dd vol.
	\end{equation}
	Given $C>0$, there exist a compact interval $I\subset\bb R$ and a constant $C_1>0$ such that for all compact set $K$ in $\Pi \backslash N$ we have
	\begin{equation}\label{ineq:cusp1} 
	\begin{split}
	E(f)+C_1\int_{K\times I}f^2\dd vol\geq C\int_{K\times \bb R_{\geq 0}} e^{-t}f^2\dd vol.
	\end{split}
	\end{equation}
\end{lem}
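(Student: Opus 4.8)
The plan is to obtain both assertions as specializations of Proposition \ref{prop:product} and Corollary \ref{cor:lpi} to the concrete cusp geometry, exploiting the fact that here the warping is exact rather than merely bounded. The geometric input, already recorded in the paragraph preceding the statement, is that the quotient metric on $(\Pi\backslash N)\times\bb R$ is a quasi-warped product $g=\dd t^2+g_t$ whose volume density is exactly $h(t)=e^{2\ell t}$: from the matrix of $g_t$ under $W^1,W^2$ in \eqref{equ:duamet} the density is $e^{(m+2q)t}$, and $m+2q=(n+1)\dim_{\bb R}\bb F-2=2\ell$. In particular hypothesis \eqref{equ:parth} holds with $\lambda=\ell$ and $\err\equiv 0$, and $\partial_t h=2\ell h$ is an exact equality.

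For the identity \eqref{ineq:cusp0} I would not merely quote the inequality of Proposition \ref{prop:product}, but rerun its argument and observe that the one inequality appearing there collapses to equality once $\err\equiv 0$. Concretely, put $\varphi=e^{-\ell t}$ and $u=f/\varphi=e^{\ell t}f$. Using \eqref{equ:deltavarphi} with $\partial_t h/h=2\ell$ gives $\Delta\varphi=\partial_{tt}\varphi+(\partial_t h/h)\partial_t\varphi=\ell^2\varphi-2\ell^2\varphi=-\ell^2\varphi$, so $\varphi\Delta\varphi=-\ell^2 e^{-2\ell t}$ exactly. Substituting $\varphi$ and $u$ into Barta's identity (Lemma \ref{lem:barta}) yields
\[
\int\|\nabla f\|^2\dd vol=\int e^{-2\ell t}\|\nabla(e^{\ell t}f)\|^2\dd vol-\int (e^{\ell t}f)^2\,\varphi\Delta\varphi\,\dd vol,
\]
and since $(e^{\ell t}f)^2\varphi\Delta\varphi=-\ell^2 f^2$ the last integral equals $-\ell^2\int f^2\dd vol$; hence $\int\|\nabla f\|^2\dd vol=\int e^{-2\ell t}\|\nabla(e^{\ell t}f)\|^2\dd vol+\ell^2\int f^2\dd vol$. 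Subtracting $\ell^2\int f^2\dd vol$ from both sides gives $E(f)=\int e^{-2\ell t}\|\nabla(e^{\ell t}f)\|^2\dd vol$, which is \eqref{ineq:cusp0}.

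For the Lax--Phillips inequality \eqref{ineq:cusp1} I would apply Corollary \ref{cor:lpi} directly with $L=\Pi\backslash N$, $\lambda=\ell$, and $C_0=1$, the latter being legitimate because $h(t)=e^{2\ell t}$ satisfies $e^{2\ell t}/C_0\leq h(t)\leq C_0 e^{2\ell t}$ with $C_0=1$. For the prescribed $C>0$ the corollary furnishes a compact interval $I$ and a constant $C_1>0$ with
\[
E(f)+C_1\int_{K\times I}f^2\dd vol\geq C\int_{K\times\bb R}e^{-|t|}f^2\dd vol
\]
for every compact $K\subset\Pi\backslash N$. Since $e^{-|t|}f^2\geq 0$ and $e^{-|t|}=e^{-t}$ on $\bb R_{\geq 0}$, discarding the contribution of $t<0$ from the right-hand side produces the stated bound $C\int_{K\times\bb R_{\geq 0}}e^{-t}f^2\dd vol$.

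Both parts are thus routine specializations and I do not expect a substantial obstacle. The only point demanding care is the upgrade from the inequality of Proposition \ref{prop:product} to the exact identity \eqref{ineq:cusp0}, which hinges on the exact homogeneity $\partial_t h=2\ell h$ of the cusp; I would therefore verify the density computation $h=e^{2\ell t}$ (equivalently $m+2q=2\ell$) with some care, since it is precisely what converts the curvature-driven inequality into an equality.
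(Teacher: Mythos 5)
Your proof is correct and takes essentially the same route as the paper, which likewise obtains the lemma by specializing Proposition \ref{prop:product} and Corollary \ref{cor:lpi} to the cusp data $\lambda=\ell$, $c(t)\equiv 0$, $h(t)=e^{2\ell t}$ (with $m+2q=2\ell$). Your explicit rerun of Barta's identity with $\varphi=e^{-\ell t}$, showing $\Delta\varphi=-\ell^2\varphi$ exactly and hence upgrading the inequality of Proposition \ref{prop:product} to the equality \eqref{ineq:cusp0}, merely spells out a step the paper leaves implicit.
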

\subsection{Convex subsets and the normal exponential map} 
\label{sec:comcov}
We need a lemma which says that in the normal exponential coordinate, the Riemannian manifold has a quasi-warped product metric. This lemma is similar to the Gauss lemma, where the hypersurface $S$ degenerates to a point.
\begin{lem}\label{lem:normal}
	Let $S$ be a smooth hypersurface of a Riemannian manifold $M$. Let $\nexp :S\times \bb R_{\geq 0}\rightarrow M$ be the normal exponential map given by $\nexp (x,t)=\exp_x(t\normal(x))$. Assume $\nexp$ is an embedding. Then for every $x$ in $S$ and $s\geq 0$, the curve $\gamma_x:t\rightarrow \exp_x(t\normal(x))$ is normal to the hypersurfaces $\nexp (S\times \{s \})$. 
\end{lem}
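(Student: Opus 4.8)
The plan is to prove this by the classical Gauss-lemma computation, realizing the tangent vectors to the level hypersurface $\nexp(S\times\{s\})$ as values of Jacobi fields along the radial geodesics and showing that these stay orthogonal to the radial direction.

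First I would fix $x\in S$ and a tangent vector $w\in T_xS$, choosing a curve $\sigma(u)$ in $S$ with $\sigma(0)=x$ and $\sigma'(0)=w$. I then consider the variation $\Gamma(u,t)=\exp_{\sigma(u)}(t\normal(\sigma(u)))=\nexp(\sigma(u),t)$. For each fixed $u$ the curve $t\mapsto\Gamma(u,t)$ is a geodesic, so, writing $T=\partial_t\Gamma$ and $J=\partial_u\Gamma$, the field $J$ is a Jacobi field along $\gamma_x$ satisfying $J(0)=w$ and $T(0)=\normal(x)$. Note that as $w$ ranges over $T_xS$ the vectors $J(s)=\partial_u\Gamma(0,s)$ span the tangent space of $\nexp(S\times\{s\})$ at $\gamma_x(s)$, precisely because $\nexp$ is an embedding and hence $d\nexp$ is injective.

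The heart of the argument is to show that $\varphi(t):=\l T,J\r$ vanishes identically. Differentiating, and using that $t\mapsto\Gamma(u,t)$ is a geodesic (so $\nabla_tT=0$) together with the symmetry $\nabla_tJ=\nabla_uT$ (torsion-freeness and $[\partial_t,\partial_u]=0$), I obtain
$$\varphi'(t)=\l T,\nabla_tJ\r=\l T,\nabla_uT\r=\tfrac12\partial_u\l T,T\r.$$
Since each radial curve is the geodesic emanating with the unit speed vector $\normal(\sigma(u))$, the squared speed $\l T,T\r\equiv1$ is independent of $u$, hence $\varphi'\equiv0$. Evaluating at $t=0$ gives $\varphi(0)=\l\normal(x),w\r=0$, because $\normal(x)$ is the unit normal to $S$ and $w\in T_xS$. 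Therefore $\varphi\equiv0$, that is $\l\gamma_x'(s),J(s)\r=0$ for every $s\geq0$.

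Finally, since the $J(s)$ span the tangent space to $\nexp(S\times\{s\})$ at $\gamma_x(s)$, the vanishing of $\varphi$ shows that $\gamma_x'(s)$ is orthogonal to this hypersurface, which is exactly the assertion. I do not expect a serious obstacle here; the computation is short and robust. The only point requiring a little care is the spanning claim, which is where the embedding hypothesis on $\nexp$ is genuinely used, so that the images of $\partial_u$ really sweep out the full tangent space of each level set rather than a proper subspace.
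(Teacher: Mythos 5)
Your proof is correct, and it is the same variation-of-geodesics computation as the paper's, with one small but genuine difference in how the orthogonality is propagated along the geodesic. The paper sets up the same family $\Gamma(u,t)$ (there via a local chart, with $J=\partial_{u_i}\tilde\phi$), uses the same symmetry identity $\nabla_t J=\nabla_u T$ to check the two initial conditions $J(0)\perp\normal(x)$ and $J'(0)\perp\normal(x)$, and then concludes by the standard fact that a Jacobi field whose value and covariant derivative at $t=0$ are orthogonal to $\gamma'$ stays orthogonal for all $t$ --- a step which tacitly uses the Jacobi equation and the curvature symmetry $\l R(\gamma',J)\gamma',\gamma'\r=0$ to make $t\mapsto\l J,\gamma'\r$ linear. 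You instead run the identity $\varphi'(t)=\tfrac12\,\partial_u\l T,T\r$ at \emph{every} $t$ and kill it using the fact that all the radial geodesics are unit speed, so $\l T,T\r\equiv 1$ in $u$; this is the pure Gauss-lemma computation and needs no curvature input or Jacobi-field theory at all, making your closing step marginally more elementary and self-contained. You also make explicit the spanning claim --- that the vectors $J(s)$, for $w\in T_xS$, exhaust $T_{\gamma_x(s)}\nexp(S\times\{s\})$ because the embedding hypothesis forces $d\nexp$ to be injective --- which the paper leaves implicit in the phrase ``this is true for all $i$, and the result follows.'' Both arguments are sound; nothing is missing from yours.
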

\begin{defi}
	Let $M$ be a complete Riemannian manifold, and let $D$ be a closed subset of $M$. We call $D$ geodesically convex if the preimage $\tilde{D}$ of $D$ in the universal cover $\tilde{M}$ is convex, that is for any two points $x,y$ in $\tilde D$ all geodesics between $x$ and $y$ are contained in $\tilde D$.
\end{defi}
	Let $M$ be a rank one locally symmetric manifold such that its universal cover has volume entropy $2\ell$.
Let $\convex$ be a geodesically convex closed subset of $M$ with non empty interior and with smooth boundary. Let $S=\partial\convex$, and let $\nexp :S\times \bb R_{\geq 0}$ be the outer normal exponential map given by $\nexp (x,t)=\exp_x(t\normal(x))$. Assume that $\nexp $ is a diffeomorphism from $S\times\bb R_{\geq 0}$ to $M-\mathring{D}$.
By Lemma \ref{lem:normal}, the metric can be written as in \eqref{equ:gddt}. Let $h$ be the density function defined as in Proposition \ref{prop:product}.
\begin{lem}\label{volumefunction}
	With the above assumption, if we have an upper bound on the second fundamental form on $S$, then there exists $C_1>0$ depending on the bound such that the density function $h$ satisfies the following inequalities for every $x$ in $S$, and every $t\geq 0$:
	\begin{align}\label{equ:partialh}
	\partial_th(x,t)\geq 2\ell(\tanh t)h(x,t),\quad e^{2\ell t}/C_1\leq h(x,t)\leq C_1e^{2\ell t}.
	\end{align}
\end{lem}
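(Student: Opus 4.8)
The plan is to translate the statement into a comparison for the Riccati equation governing the shape operators of the equidistant hypersurfaces $S_t=\nexp(S\times\{t\})$. In the normal chart the differential of $\nexp$ along $S$ is encoded by the Jacobi tensor $A(t)$ with $A(0)=\mathrm{Id}$, so that $h(x,t)=\det A(t)$ and in particular $h(x,0)=1$ (by the normalization that $\dd x$ is the volume element on $S=L\times\{0\}$ in Proposition \ref{prop:product}). Writing $U(t)=A'(t)A(t)^{-1}$ for the symmetric shape operator of $S_t$ with respect to the outward normal $\normal=\partial_t$, one has the standard identity $\partial_t\log h=\mathrm{tr}\,U$ together with the Riccati equation $U'+U^2+\mathcal R=0$ along each normal geodesic $\gamma_x$, where $\mathcal R(t)v=R(v,\gamma_x'(t))\gamma_x'(t)$. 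Geodesic convexity of $\convex$ forces $U(0)\ge 0$ (for the outward normal), and the hypothesis on the second fundamental form gives a constant $B$ with $U(0)\le B\,\mathrm{Id}$.

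Because $X$ is symmetric, $\nabla R=0$, so $\mathcal R$ is parallel along $\gamma_x$; by \eqref{equ:riecur} it equals $-1$ on a parallel subbundle $\mathcal V^1$ of rank $m=(n-1)\dim_{\bb R}\bb F$ and $-4$ on a parallel subbundle $\mathcal V^2$ of rank $q=\dim_{\bb R}\bb F-1$, where $m+2q=(n+1)\dim_{\bb R}\bb F-2=2\ell$. Using that this splitting is parallel, I would introduce two block-diagonal solutions of the same Riccati equation: the lower model $U_0=\tanh(t)\,\mathrm{Id}_{\mathcal V^1}\oplus 2\tanh(2t)\,\mathrm{Id}_{\mathcal V^2}$ with $U_0(0)=0$, and an upper model $U_1=u_1(t)\,\mathrm{Id}_{\mathcal V^1}\oplus u_2(t)\,\mathrm{Id}_{\mathcal V^2}$ where $u_1,u_2$ solve $u'=1-u^2$ and $u'=4-u^2$ with $u_1(0)=u_2(0)=B$. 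One checks directly (using $\mathrm{sech}^2+\tanh^2=1$) that $U_0$ and $U_1$ do solve $U'+U^2+\mathcal R=0$.

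The heart of the argument is the comparison theorem for matrix Riccati equations: two symmetric solutions with a common curvature term $\mathcal R$ and ordered initial data remain ordered for $t\ge 0$. Applied with $U_0(0)=0\le U(0)\le B\,\mathrm{Id}=U_1(0)$, it yields $U_0(t)\le U(t)\le U_1(t)$. Taking traces and using $\tanh 2t\ge\tanh t$ gives
\[
\mathrm{tr}\,U(t)\ge m\tanh t+2q\tanh 2t\ge (m+2q)\tanh t=2\ell\tanh t,
\]
which is exactly the asserted inequality $\partial_t h\ge 2\ell(\tanh t)h$. Integrating from $0$ with $h(x,0)=1$ yields $h(x,t)\ge(\cosh t)^{2\ell}\ge e^{2\ell t}/2^{2\ell}$. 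For the upper bound, $u_1,u_2$ are bounded with $\int_0^t u_1\le t+C$ and $\int_0^t u_2\le 2t+C$ for a constant $C=C(B)$, so $\log h(x,t)=\int_0^t\mathrm{tr}\,U\le\int_0^t\mathrm{tr}\,U_1\le 2\ell t+C(B)$, whence $h(x,t)\le e^{C(B)}e^{2\ell t}$. Taking $C_1=\max(2^{2\ell},e^{C(B)})$ completes the proof.

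I expect the main obstacle to be the matrix, rather than scalar, character of the Riccati equation: $U(0)$ need not respect the parallel splitting $\mathcal V^1\oplus\mathcal V^2$, so one cannot simply decouple into scalar ODEs, and the sharp rate $2\ell=m+2q$ must come from the operator comparison. A crude mean-curvature comparison, applying Cauchy--Schwarz to $\mathrm{tr}\,U^2$ in $\eta'=-\mathrm{tr}\,U^2-\mathrm{Ric}(\gamma',\gamma')$ with $\eta=\mathrm{tr}\,U$, only produces the equilibrium $\sqrt{(m+q)(m+4q)}\ge m+2q$, which is strictly larger than $2\ell$ whenever $q>0$; recovering the exact value $2\ell$ genuinely requires using that the curvature is $-1$ on $\mathcal V^1$ and $-4$ on $\mathcal V^2$. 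The remaining care is with the sign convention of convexity ($U(0)\ge 0$ for the outward normal) and with the normalization $h(x,0)=1$.
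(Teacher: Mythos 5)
Your proof is correct, and it takes a genuinely different route from the paper. Both arguments exploit the same two structural facts --- the curvature operator along each normal geodesic is parallel with eigenvalue $-1$ on a rank-$m$ subbundle and $-4$ on a rank-$q$ subbundle, and the trace inequality $m\tanh t+2q\tanh 2t\geq(m+2q)\tanh t=2\ell\tanh t$ --- but they process them differently. The paper integrates the Jacobi equation explicitly: since the curvature is parallel, the matrix ODE $A''=A\,\mathrm{diag}(Id_m,4Id_q)$ with $A(0)=Id$, $A'(0)=B$ has the closed-form solution of Lemma \ref{lem:at}, giving the exact formula \eqref{equ:at} for $h(x,t)=\det A(t)$; the three inequalities of \eqref{equ:partialh} are then extracted by elementary linear algebra (Lemma \ref{lem:determinant}), with the derivative bound obtained by differentiating the determinant column by column and applying that lemma to each resulting determinant. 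You instead never solve for $A(t)$: you pass to $U=A'A^{-1}$, sandwich it between the explicit block-scalar barrier solutions $U_0$ and $U_1$ via the matrix Riccati comparison theorem (ordered initial data, common curvature term), and take traces. Your route is cleaner and more robust --- it would survive with curvature pinching in place of exact curvatures, and it avoids the paper's somewhat delicate determinant-derivative manipulation --- but it imports a nontrivial external theorem (Eschenburg--Heintze comparison theory for Riccati equations), which you should cite or prove, whereas the paper's argument is self-contained and yields strictly more information, namely the exact formula for $h$. Two small points in your favor: you correctly identify that the operator (not scalar) comparison is needed because $U(0)$ need not respect the parallel splitting, and your observation that the crude mean-curvature bound via Cauchy--Schwarz only gives the rate $\sqrt{(m+q)(m+4q)}>2\ell$ when $q>0$ is accurate and explains why the block structure is essential. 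One hypothesis worth making explicit: the global existence of $U$ on $[0,\infty)$ (no focal points, $A(t)$ invertible), which follows from the standing assumption that $\nexp$ is a diffeomorphism onto $M-\mathring{\convex}$.
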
	
\begin{rem}
	The first inequality in \eqref{equ:partialh} has already been used in \cite[Lemma 2.3]{hamenstadt2004} without proof. For completeness, a proof is given here.
\end{rem}
\begin{rem}
	This lemma is a consequence of the negative curvature and the convexity. We do not have a better inequality $\partial_th(t)\geq 2\ell h(t)$. For example,  in $\bb H^2$, let $\convex$ be the $r-$neighbourhood of a geodesic, then we can compute explicitly that $h(x,t)=\cosh(t+r)/\cosh r$.
\end{rem}
%
%
%
The proofs of these two lemmas, which use the standard computations for Jacobi fields, will be given later. By Lemma \ref{volumefunction}, we have 
\begin{equation}\label{ineq:parth}
\partial_t h\geq 2\ell(\tanh t)h\geq 2\ell(1-2e^{-t} )h.
\end{equation}
Using Proposition \ref{prop:product} and Corollary \ref{cor:lpi} with $S\times \bb R_{\geq 0}$, (Proposition \ref{prop:product} deals with manifolds $L\times \bb R$, but the proof of $L\times \bb R_{\geq 0}$ is exactly the same) we have
\begin{lem}\label{prop:complement}
	Let $E$ be as in \eqref{equ:energy}. With the same assumptions as in Lemma \ref{volumefunction}, for a function $f\in C_c^{\infty}(\nexp(S\times \bb R_{\geq 0}))$, we have
	\begin{equation}\label{ineq:complement0}
	E(f)\geq \int (\|\nabla (e^{\ell t})f)\|^2e^{-2\ell t}-4\ell^2e^{-t}f^2)\dd vol.
	\end{equation}
Given $C>0$, there exist a compact set $I\subset\bb R_{\geq 0}$ and a constant $C_1>0$ such that for all compact set $K$ in $S$
	\begin{equation}\label{ineq:complement1} 
	\begin{split}
	E(f)+C_1\int_{\nexp(K\times I)}f^2\dd vol\geq C\int_{\nexp(K\times \bb R_{\geq 0})} e^{-t}f^2\dd vol.
	\end{split}
	\end{equation}
\end{lem}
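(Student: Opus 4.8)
The plan is to read off both inequalities from the quasi-warped product estimates of Section~2, applied in the normal exponential coordinate with base $L=S$. By Lemma~\ref{volumefunction} the density $h$ on $S\times\Rplus$ obeys $\partial_t h\geq 2\ell(\tanh t)h$ together with the two-sided bound $e^{2\ell t}/C_1\leq h\leq C_1e^{2\ell t}$. The first of these yields the coarser estimate \eqref{ineq:parth}, $\partial_t h\geq 2\ell(1-2e^{-t})h$, which is exactly the hypothesis \eqref{equ:parth} of Proposition~\ref{prop:product} with $\lambda=\ell$ and $\err(t)=2e^{-t}$; the two-sided bound is the hypothesis of Corollary~\ref{cor:lpi} with the same $\lambda$. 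With these identifications both claimed inequalities should fall out directly.

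For \eqref{ineq:complement0} I would simply invoke Proposition~\ref{prop:product}. Its conclusion reads $\int\|\nabla f\|^2\dd vol-\ell^2\int(1-2\err(t))f^2\dd vol\geq\int e^{-2\ell t}\|\nabla(fe^{\ell t})\|^2\dd vol$. Since $\err(t)=2e^{-t}$ gives $1-2\err(t)=1-4e^{-t}$, the left-hand side is $\int\|\nabla f\|^2\dd vol-\ell^2\int f^2\dd vol+4\ell^2\int e^{-t}f^2\dd vol=E(f)+4\ell^2\int e^{-t}f^2\dd vol$, using the definition \eqref{equ:energy} of $E$ (the warping constant $\ell$ being half the volume entropy, as in the hypothesis of Lemma~\ref{volumefunction}). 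Moving the error term to the right gives exactly \eqref{ineq:complement0}.

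For \eqref{ineq:complement1} I would apply Corollary~\ref{cor:lpi} with $\lambda=\ell$ and the given $C>0$, licensed by the two-sided bound on $h$; note that its left-hand energy $\int(\|\nabla f\|^2-\ell^2 f^2)\dd vol$ is exactly $E(f)$. On the half-line $|t|=t$, so $e^{-|t|}=e^{-t}$, and the corollary supplies a compact $I\subset\Rplus$ and a constant $C_1>0$ such that $E(f)+C_1\int_{K\times I}f^2\dd vol\geq C\int_{K\times\Rplus}e^{-t}f^2\dd vol$ for every compact $K\subset S$; transporting along the diffeomorphism $\nexp$ turns this into \eqref{ineq:complement1}.

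The only genuine point to check — and the reason the reduction is not entirely mechanical — is that the base is now the half-line $\Rplus$, so $\nexp(S\times\Rplus)=M-\mathring{D}$ carries a boundary $S=\{t=0\}$ on which a compactly supported $f$ need not vanish. Tracing Barta's identity (Lemma~\ref{lem:barta}), which underlies Proposition~\ref{prop:product}, on a manifold with boundary, the integration by parts leaves an extra boundary term $\tfrac12\int_S u^2\,\partial_\nu(\varphi^2)\,\dd S$ with $u=fe^{\ell t}$ and $\varphi=e^{-\ell t}$. I would verify that its sign is favourable: the outward normal of $M-\mathring{D}$ points in the $-\partial_t$ direction, so $\partial_\nu(\varphi^2)=-\partial_t e^{-2\ell t}|_{t=0}=2\ell>0$ and the term equals $\ell\int_S f^2\,\dd S\geq0$; it therefore only strengthens the lower bound and may be discarded. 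This is what is meant by the remark that the proof on $L\times\Rplus$ is the same as on $L\times\R$. The one-dimensional ingredient of Corollary~\ref{cor:lpi}, namely Lemma~\ref{lem:derivative}, is already formulated on half-lines $[r,\infty)$ and requires no vanishing at the endpoint, so it restricts to $\Rplus$ verbatim and the remainder of the corollary's argument is unaffected.
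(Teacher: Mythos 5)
Your proposal is correct and follows the paper's own route exactly: derive $\partial_t h\geq 2\ell\tanh t\,h\geq 2\ell(1-2e^{-t})h$ from Lemma \ref{volumefunction}, then apply Proposition \ref{prop:product} with $\lambda=\ell$, $\err(t)=2e^{-t}$ to get \eqref{ineq:complement0} and Corollary \ref{cor:lpi} to get \eqref{ineq:complement1}. Your verification of the boundary term in Barta's identity on $S\times\bb R_{\geq 0}$ (outward normal $-\partial_t$, hence an extra nonnegative term $\ell\int_S f^2\,\dd S$ that may be discarded) correctly fills in what the paper dismisses with the parenthetical remark that the proof on $L\times\bb R_{\geq 0}$ is ``exactly the same.''
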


It remains to prove Lemma \ref{lem:normal} and Lemma \ref{volumefunction}.
\begin{proof}[Proof of Lemma \ref{volumefunction}]
	The main idea is to compute the density function $h$ by the second fundamental form. The second fundamental form of $S$ at $x$ is defined to be the symmetric form $\two_S:T_xS\times T_xS\rightarrow \R$,
	\[\two_S(v,u)=g(D_v\normal(x),u), \]
	where $v,u$ are two vectors in the tangent space $T_xS$. By the convexity of $S$, the second fundamental form $\two_S$ is positive definite at every $x$ in $S$.
	
	Fix a point $x$ in $S$. By \eqref{equ:riecur}, starting from the outer unit normal vector $\normal(x)$, we can find an orthonormal basis $\{\normal(x), (Y_j)_{1\leq j\leq m}, (Y_{k})_{m+1\leq k\leq m+q}\}$ of $T_xM$ such that
	\begin{equation*}
	R(\normal(x),Y_j,\normal(x))=-Y_j,\ \ R(\normal(x),Y_{k},\normal(x))=-4Y_k, \text{ where }m=(n-1)\dim_{\bb R}\bb F, q=\dim_{\bb R}\bb F-1.
	\end{equation*}
	Let $B$ be the matrix representation of $\two_S$ with the basis $\{(Y_j)_{1\leq j\leq m}, (Y_{k})_{m+1\leq k\leq m+q}\}$ of $T_xS$.
	\begin{lem}\label{lem:at}
		With the same assumption as in Lemma \ref{volumefunction}, we have
		\begin{equation}\label{equ:at}
		h(x,t)=\det\left(\begin{pmatrix}
		\cosh t Id_{m} & 0 \\ 0 & \cosh 2t Id_{q}
		\end{pmatrix} + B\begin{pmatrix}
		\sinh t Id_{m} & 0 \\ 0 & \frac{1}{2}\sinh 2t Id_{q}
		\end{pmatrix} \right).
		\end{equation}
	\end{lem}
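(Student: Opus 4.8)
The plan is to compute the volume density $h(x,t)$ via the theory of Jacobi fields along the normal geodesic $\gamma_x(t)=\exp_x(t\normal(x))$. By Lemma \ref{lem:normal}, the map $\nexp$ gives a quasi-warped product metric, and the density $h(x,t)$ measures how the tangent space to $S$ is spread by the exponential map: concretely, $h(x,t)=\det(J(t))$ where $J(t)$ is the matrix whose columns are the Jacobi fields $(J_i(t))$ along $\gamma_x$ obtained by varying $x$ within $S$ in the directions of the chosen orthonormal basis $\{Y_i\}$ of $T_xS$. The key point is that these are exactly the Jacobi fields with the initial conditions $J_i(0)=Y_i$ and $J_i'(0)=D_{Y_i}\normal(x)$, the latter being dictated by the second fundamental form.

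First I would set up the Jacobi equation $J''(t)+R(\gamma_x'(t),J(t),\gamma_x'(t))=0$ along $\gamma_x$, using a parallel orthonormal frame $\{E_i(t)\}$ extending $\{Y_i\}$. Because $X$ is locally symmetric, the curvature tensor is parallel, so $R(\normal,\cdot,\normal)$ keeps the same matrix form \eqref{equ:riecur} all along $\gamma_x$: it acts as $-1$ on the $m$-dimensional $V^1$-block and as $-4$ on the $q$-dimensional $V^2$-block. This decouples the Jacobi equation into scalar ODEs $f''=f$ on the first block and $f''=4f$ on the second. The scalar solutions with the relevant boundary data are combinations of $\cosh t,\sinh t$ (respectively $\cosh 2t,\tfrac12\sinh 2t$), which is precisely where the two diagonal blocks in \eqref{equ:at} come from.

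Next I would solve the matrix ODE with the correct initial conditions. Writing $J(t)=A(t)+B\,S(t)$ where $A(t)=\mathrm{diag}(\cosh t\,Id_m,\cosh 2t\,Id_q)$ is the solution with $J(0)=Id$, $J'(0)=0$, and $S(t)=\mathrm{diag}(\sinh t\,Id_m,\tfrac12\sinh 2t\,Id_q)$ is the solution with $S(0)=0$, $S'(0)=Id$, one checks that $J(0)=Id$ and $J'(0)=B$. Here $B$ is the matrix of the second fundamental form $\two_S$, since $J_i'(0)=D_{Y_i}\normal(x)$ has components $g(D_{Y_i}\normal,Y_k)=\two_S(Y_i,Y_k)=B_{ki}$ in the frame. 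Taking determinants gives $h(x,t)=\det(A(t)+B\,S(t))$, which is exactly \eqref{equ:at}. One should note that $A(t)$ and $B\,S(t)$ are simultaneously written in the fixed block form, so the product $B\,S(t)$ is genuinely $B$ times the block-diagonal matrix $S(t)$, matching the statement.

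The main obstacle I anticipate is justifying the identification $h(x,t)=\det J(t)$ cleanly, including the sign and the claim that the chosen parallel frame stays adapted to the $V^1\oplus V^2$ splitting for all $t$. The first requires relating the pullback volume element under $\nexp$ to the Jacobi field matrix (a standard but slightly technical computation, since $\gamma_x'(t)$ is orthogonal to the $J_i(t)$ by Lemma \ref{lem:normal}, so the density is the determinant of the $J_i$-block alone). The second requires the parallelism of the curvature tensor on the locally symmetric space $X$ to guarantee that the eigenspaces $V^1,V^2$ of $R(\normal,\cdot,\normal)$ are preserved by parallel transport along $\gamma_x$; this is what keeps the equation block-diagonal and is essential for getting the clean closed form rather than a general matrix exponential. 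Once these two points are in place, the determinant formula \eqref{equ:at} follows immediately, and the inequalities \eqref{equ:partialh} of Lemma \ref{volumefunction} will come out by differentiating and estimating this explicit expression using the positive-definiteness of $B$.
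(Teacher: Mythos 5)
Your proposal is correct and follows essentially the same route as the paper's proof: Jacobi fields $J_i$ coming from the variation $\tilde\phi(u,t)=\exp_{\phi(u)}(t\normal(\phi(u)))$, written in a parallel orthonormal frame adapted to the curvature eigenspaces, with initial data $A(0)=Id$, $A'(0)=B$ via Schwarz's theorem, the parallelism of $R$ on the locally symmetric space keeping the Jacobi equation in the block-diagonal form $A''=A\,\mathrm{diag}(Id_m,4Id_q)$, and the explicit solution whose determinant is $h(x,t)$. The two technical points you flag are exactly the ones the paper settles the same way: $h(x,t)=\sqrt{\det(g(J_i,J_w))}=\det A(t)$ (positive since $\nexp$ is an embedding, so $A(t)$ never degenerates and $\det A(0)=1$), and the invariance of the $V^1\oplus V^2$ splitting under parallel transport via the cited result of Helgason that the curvature tensor of a locally symmetric space is parallel.
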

	\begin{proof}
	There exists a local chart on $S$ defined by ($\phi$, $U$) $\phi:\bb R^{m+q}\supset U\rightarrow S$ such that $\phi(0)=x$ and $\frac{\partial}{\partial u_i}\phi(0)=Y_i$. This particular choice of local chart implies that $\dd u$ is the volume element at $x$. Let $$\tilde{\phi}(u,t)=\exp_{\phi(u)}(t\normal(\phi(u))$$ 
	and $\tilde{U}=U\times \bb R_{\geq 0}$. Then $(\tilde{\phi},\tilde{U})$ is a local chart of $M$. For every fixed $u$, the curve $t\rightarrow\tilde{\phi}(u,t)$ is a geodesic starting from $\phi(u)$ with tangent vector $\normal(\phi(u))$.
	
	Fix all $u_w$ to $0$ except $w=i$. Then the map $H:\bb R^2\rightarrow M$ defined by $(u_i,t)\mapsto \tilde{\phi}(u_i,t)$, is a variation of geodesic. Let $J_i(t)$ be the Jacobi field defined by $J_i(t)=\frac{\partial}{\partial u_i}H(0,t)$. The volume element, in a local chart, can be written as $\sqrt{\det\left(g\left(\frac{\partial}{\partial x_i},\frac{\partial}{\partial x_w}\right)\right)}\dd x$. In our case, the volume element 
	at $\tilde{\phi}(0,t)$ is $\sqrt{\det(g(J_i(t),J_w(t))) }\dd u\dd t$. Therefore by definition, we have $h(x,t)=\sqrt{\det(g(J_i(t),J_w(t))) }$.
	
	Let $Y_i(t)$, $\normal(t)$ be the images of $Y_i$ and $\normal(x)$ under the parallel transport along the geodesic $t\mapsto\tilde{\phi}(0,t)$ with $t\geq 0$. They also form an orthonormal basis. By Lemma \ref{lem:normal}, the vectors $J_i(t)$ are orthogonal to $\normal(t)$. We decompose $J_i(t)$ with respect to $Y_i(t)$, that is $J_i(t)=\Sigma_w a_{iw}(t)Y_w(t)$, and write $A(t)=(a_{iw}(t))_{1\leq i,w\leq m+q}$. Then
	$A(0)=Id_{m+q}$ and the matrix $A'(0)$ equals $B$, the matrix representation of the second fundamental form. This is because by Schwarz's theorem, we have
	\begin{align*}
	a'_{iw}(0)=\partial_tg(J_i(t),Y_w(t))|_{t=0}=g\left(\frac{D}{\dd t}J_i(0),Y_w\right)=g\left(D_{\frac{\partial}{\partial  u_i}}\frac{\partial}{\partial t},Y_{w}\right)
	=g(D_{Y_i}\normal(x),Y_w).
	\end{align*}

	Because $J_i$ are Jacobi fields, they satisfy the Jacobi equation:
	\begin{equation}\label{equ:jacobi1}
	\frac{D^2}{\dd t^2}J_i(t)+R(\normal(t),J_i(t))\normal(t)=0.
	\end{equation}
	The map $R(\normal(t),\cdot)\normal(t)$ is a linear map on the orthogonal complement of $\normal(t)$ in the tangent space $T_{\tilde{\phi}(0,t)}M$. By \cite[IV, Thm1.3]{helgason1979differential}, in locally symmetric spaces the curvature tensor $R$ is invariant under parallel transport. Hence in our choice of the orthonormal basis $\{Y_i(t) \}_{1\leq i\leq m+q}$, the linear map $R(\normal(t),\cdot)\normal(t)$ can be represented by the matrix $diag\{Id_m,4Id_q\}$.
	From \eqref{equ:jacobi1}, we have
	\begin{equation*}
	A''(t)=A(t)\begin{pmatrix}
	Id_{m} & 0 \\ 0 & 4Id_{q}
	\end{pmatrix} .
	\end{equation*}
	The solution is determined by the initial conditions. Due to $A(0)=Id$ and $A'(0)=B$, it is
	\begin{equation*}
	A(t)=\begin{pmatrix}
	\cosh t Id_{m} & 0 \\ 0 & \cosh 2t Id_{q}
	\end{pmatrix} + B\begin{pmatrix}
	\sinh t Id_{m} & 0 \\ 0 & \frac{1}{2}\sinh 2t Id_{q}
	\end{pmatrix}.
	\end{equation*}
	Hence
	$h(x,t)=\det(g(J_i(t),J_w(t)))^{1/2}=\det(A(t))$, which implies the result.
\end{proof}

	For computing the determinant, we need a lemma.
	\begin{lem}\label{lem:determinant}
		Let $D$ be a diagonal matrix with nonnegative entries, let $B$ be a symmetric positive semidefinite matrix, and let $\begin{pmatrix}
		B_{11} & B_{12}\\ B_{21} & B_{22}
		\end{pmatrix}$ be a block partition of $B$ such that $B_{11}, B_{22}$ are square matrices. Then for all $\lambda_1,\lambda_2>0$ we have
		\[\det\left(D+\begin{pmatrix}
		\lambda_1B_{11} & \lambda_2B_{12}\\ \lambda_1B_{21} & \lambda_2B_{22}
		\end{pmatrix}\right)\geq \det(D).\]
	\end{lem}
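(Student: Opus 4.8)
The plan is to recognize the perturbation as a matrix product and then strip away its apparent non-symmetry by two similarity transformations. Write $\Lambda=\begin{pmatrix}\lambda_1 I & 0\\ 0 & \lambda_2 I\end{pmatrix}$, with the block sizes matching those of $B$. Right multiplication by $\Lambda$ scales the two block-columns of $B$ by $\lambda_1$ and $\lambda_2$, so the matrix appearing in the statement is exactly $B\Lambda$, and the claim becomes $\det(D+B\Lambda)\ge \det(D)$. The useful structural facts are that $D$ and $\Lambda$ are both diagonal, hence commute, that $B$ is positive semidefinite, and that $\Lambda$ is positive definite; the only nuisance is that $B\Lambda$ is not symmetric.

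First I would treat the case where $D$ is positive definite. Then I factor $D+B\Lambda=D(I+D^{-1}B\Lambda)$, so that $\det(D+B\Lambda)=\det(D)\det(I+D^{-1}B\Lambda)$, and it suffices to show $\det(I+D^{-1}B\Lambda)\ge 1$. Conjugating by $D^{1/2}$, and using that $D^{1/2}$ commutes with $\Lambda$, shows that $D^{-1}B\Lambda$ is similar to $D^{-1/2}BD^{-1/2}\Lambda=C\Lambda$, where $C=D^{-1/2}BD^{-1/2}$ is again positive semidefinite. In turn $C\Lambda$ is similar (via $\Lambda^{1/2}$) to $\Lambda^{1/2}C\Lambda^{1/2}$, which is positive semidefinite, so all eigenvalues $\mu_i$ of $D^{-1}B\Lambda$ are real and nonnegative. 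Hence $\det(I+D^{-1}B\Lambda)=\prod_i(1+\mu_i)\ge 1$, which yields the inequality.

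To pass to an arbitrary diagonal matrix $D$ with nonnegative entries, I would use a continuity argument: apply the positive definite case to $D_\varepsilon=D+\varepsilon I$, which is still diagonal and hence still commutes with $\Lambda$, to obtain $\det(D_\varepsilon+B\Lambda)\ge \det(D_\varepsilon)$ for every $\varepsilon>0$; then let $\varepsilon\to 0$, using that the determinant is continuous and that $B\Lambda$ does not depend on $\varepsilon$.

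The only real subtlety, and the step I expect to be the crux, is exactly the non-symmetry of $B\Lambda$: one cannot directly invoke the monotonicity of the determinant under addition of a positive semidefinite matrix, since $B\Lambda$ is not one. The two successive similarities—first by $D^{1/2}$ to replace $B$ by the positive semidefinite $C$, then by $\Lambda^{1/2}$ to symmetrize $C\Lambda$—are precisely what repairs this, reducing the statement to nonnegativity of the eigenvalues of a genuinely positive semidefinite matrix. Everything else is routine bookkeeping.
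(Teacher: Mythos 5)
Your proof is correct, and while it starts from the same observation as the paper---that the perturbing block matrix is exactly $B\Lambda$ with $\Lambda=\mathrm{diag}(\lambda_1 Id_m,\lambda_2 Id_q)$---it then takes a genuinely different route. The paper factors $\Lambda$ out on the \emph{right}: it writes $D+B\Lambda=(D\Lambda^{-1}+B)\Lambda$, observes that $D\Lambda^{-1}$ is again diagonal with nonnegative entries, so that $D\Lambda^{-1}+B$ is a sum of two symmetric positive semidefinite matrices, and invokes the elementary monotonicity $\det(A_1+A_2)\geq\det(A_1)$ for such matrices; this yields $\det(D+B\Lambda)\geq\det(D\Lambda^{-1})\det(\Lambda)=\det(D)$ in one step, with no regularization even when $D$ is singular (which genuinely occurs in the application, where the lemma is used with $D=\mathrm{diag}(0,Id_{m+q-1})$). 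You instead factor $D$ out on the \emph{left}, which forces you to first assume $D$ invertible, then symmetrize $D^{-1}B\Lambda$ by two similarities---by $D^{1/2}$ and then by $\Lambda^{1/2}$, both legitimate because the diagonal matrices involved all commute---to conclude that its spectrum is real and nonnegative, whence $\det(I+D^{-1}B\Lambda)=\prod_i(1+\mu_i)\geq 1$, and finally you dispose of singular $D$ via the $D+\varepsilon I$ limit. In effect you have inlined, in this special case, the proof of the elementary determinant monotonicity that the paper simply quotes: your argument is more self-contained and additionally identifies the nonnegativity of the spectrum of $D^{-1}B\Lambda$, while the paper's right-factorization is shorter and covers degenerate $D$ directly without any continuity argument. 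Both proofs are complete.
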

	\begin{proof}
		It is elementary that the sum of two symmetric positive semidefinite matrices has determinant no less than the determinant of each one. We only need to transform our matrix to a symmetric matrix. We have
		\begin{align*}
		\det\left(D+\begin{pmatrix}
		\lambda_1B_{11} & \lambda_2B_{12}\\ \lambda_1B_{21} & \lambda_2B_{22}
		\end{pmatrix}\right)=\det\left((D\begin{pmatrix}
		\lambda_1^{-1} Id_m & 0\\ 0 &\lambda_2^{-1} Id_q
		\end{pmatrix}+\begin{pmatrix}
		B_{11} & B_{12}\\ B_{21} & B_{22}
		\end{pmatrix})\begin{pmatrix}
		\lambda_1 Id_m & 0\\ 0 &\lambda_2 Id_q
		\end{pmatrix}\right).
		\end{align*}
		Since $D$ is diagonal, the first matrix in the right-hand side is again symmetric. We have
		\begin{align*}
		\det\left(D+\begin{pmatrix}
		\lambda_1B_{11} & \lambda_2B_{12}\\ \lambda_1B_{21} & \lambda_2B_{22}
		\end{pmatrix}\right)\geq \det(D).
		\end{align*}
		The proof is complete.
	\end{proof}
	Return to the proof of Lemma \ref{volumefunction}.
	Let $\begin{pmatrix}
	B_{11} & B_{12}\\ B_{21} & B_{22}
	\end{pmatrix}$ be the block partition of $B$ such that $B_{11}, B_{22}$ are square matrices of order $m$, $q$. By \eqref{equ:at}
	\begin{equation}\label{equ:hxt}
	\begin{split}
	h(x,t)=(\cosh t)^m(\cosh 2t)^q\det\left(Id_{m+q}+\begin{pmatrix}
	\tanh tB_{11} & \frac{\tanh 2t}{2}B_{12}\\\tanh tB_{21}&\frac{\tanh 2t}{2} B_{22}
	\end{pmatrix}\right).
	\end{split}
	\end{equation}
	Since $B$ is positive semidefinite, using Lemma \ref{lem:determinant}, we have $h(t)\geq \cosh^mt\cosh^q2t\geq e^{2\ell t}/C_1$. The upper bound of $h(t)$ is due to the upper bound on the second fundamental form, that is to say $B$ is bounded. 
	
	The derivative in the scalar part of \eqref{equ:hxt}, $\cosh^mt\cosh^q2t$, gives us
	\begin{equation*}
	(m\tanh t+2q\tanh 2t)h(t)\geq (m+2q)\tanh t\,h(t)=2\ell\tanh t\,h(t).
	\end{equation*}
	It remains to prove the positivity of the derivative of the determinant part of \eqref{equ:hxt}, which is the sum of derivatives in every column. Since all the terms are similar, we need only to show that the derivative in the first column is non negative. The derivative of $\tanh t$ is $1/\cosh^2 t$, and we multiply the first column with $\tanh t\cosh^2 t$ to recover the original column. The determinant of the derivative of the first column becomes
	\begin{align*}
	\frac{1}{\tanh t\cosh^2t}\det\left(\begin{pmatrix}
	0&0\\
	0&Id_{m+q-1}
	\end{pmatrix}+\begin{pmatrix}
	\tanh tB_{11} & \frac{\tanh 2t}{2}B_{12}\\\tanh tB_{21}&\frac{\tanh 2t}{2} B_{22}
	\end{pmatrix}\right),
	\end{align*}
	which is nonnegative by Lemma \ref{lem:determinant}.
\end{proof}
It remains to prove Lemma \ref{lem:normal}.
\begin{proof}[Proof of Lemma \ref{lem:normal}]
	Use the same notation as in the proof of Lemma \ref{lem:at}. Let 
	$$J(t)=\frac{\partial}{\partial u_i}H(0,t)=\frac{\partial}{\partial u_i}\tilde{\phi}(u,t)|_{u=0}.$$
	Then $J(t)$ is a variation of geodesic, and it is a Jacobi field, which is determined by its value and derivative at $0$. We have $J(0)=\frac{\partial}{\partial u_i}\tilde\phi(u,0)|_{u=0}=\frac{\partial}{\partial u_i}\phi(0)$, which is a tangent vector of $S$ at $x$. Hence $J(0)$ is normal to $\normal(x)$. For the derivative, by Schwarz's theorem we have
	\begin{equation}\label{equ:jacobi}
	J'(0)=\frac{D}{\dd t}J(t)|_{t=0}=D_{\frac{\partial}{\partial  t}}\frac{\partial}{\partial u_i}=D_{\frac{\partial}{\partial  u_i}}\frac{\partial}{\partial t}. 
	\end{equation}
	Therefore
	\[g(J'(0),\normal(x))=g\left(D_{\frac{\partial}{\partial  u_i}}\frac{\partial}{\partial t},\frac{\partial}{\partial t}\right)=\frac{1}{2}\frac{\partial}{\partial u_i}\left\|\frac{\partial }{\partial t}\right\|^2=0. \]
	Hence $J(t)$ is normal to the tangent vector of the geodesic. This is true for all $i$, and the result follows.
\end{proof}
\section{Geometrically finite manifolds}
\label{sec:geofinman}
We return to the study of the spectrum of the whole manifold. We give a topological decomposition of geometrically finite manifolds. In order to describe it, we use standard cusp regions. For a subset $S$ of $M_c$, we use $\mathring{S}$ to denote its topological interior, that is $\mathring{S}=S-\bar{S^c}\cap S$. The reader who is only interested in convex cocompact manifolds (geometrically finite manifolds without cusps) can skip Section \ref{sec:stacus} and go directly to Section \ref{sec:goopar}. Our goal in this section is to obtain the Lax-Phillips inequality, then Theorem \ref{the:main} follows by Proposition \ref{prop:main}.
\subsection{Standard cusp regions}
\label{sec:stacus}
The manifold with boundary $M_c$ may have some cusps. To analyse the structure of cusps, we introduce the concept of a topological end. 
\begin{defi}\label{defi:end}
	Let $T$ be a differential manifold. An end $e$ is a function which assigns every compact subset $K$ of $T$ a non empty connected component of $T\backslash K$, such that if $K\subset K'$, then $e(K)\supset e(K')$. 
	
	Let $K_1\subset K_2\subset \dots$ be an ascending sequence of compact subsets whose interiors cover $T$. We call $e(K_i)$ a system of neighbourhoods for the end $e$. 
	
	A neighbourhood for the end $e$ is an open subset $U$ such that $U\supset e(K_n)$ for some $n$.
\end{defi}
\begin{prop}\cite[Proposition 4.4]{bowditch1995}\label{prop:end}
	If $ \Pi$ is a discrete parabolic subgroup of $G$, then $M_c( \Pi)$ has precisely one topological end. Moreover, we can find a system of neighbourhoods for the end consisting of geodesically convex submanifolds of $M_c( \Pi)$.
\end{prop}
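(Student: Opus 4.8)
\textbf{Proof proposal for Proposition \ref{prop:end}.}

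The plan is to produce the topological end directly from the geometry of the horospherical model. Recall that $M(\Pi)\simeq (\Pi\backslash N)\times\bb R$ as a topological space, with the $\bb R$-factor recording the geodesic flow toward the fixed point $\infty$. The first step is to understand $M_c(\Pi)$, which additionally compactifies the $\Omega$-directions. Since $\Pi$ fixes only $\infty$ in $X_c$, the limit set $\Lambda(\Pi)=\{\infty\}$, so $\Omega=X_I\setminus\{\infty\}$, and the domain of discontinuity adds a boundary to $M(\Pi)$ in the ``outward'' direction $t\to-\infty$. The heuristic is that $M_c(\Pi)$ deformation-retracts onto a copy of $\Pi\backslash N$ sitting at the boundary, while the noncompact cusp direction $t\to+\infty$ (equivalently $y=e^{-t}\to 0$, approaching $\infty$) is the unique place the manifold fails to be compact.

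The key steps, in order, are as follows. First, I would make precise the product description of $M_c(\Pi)$: using the half-space coordinates $(\nu,y)\in N\times\bb R_{\geq0}$ from Section \ref{sec:rankone} and quotienting by $\Pi\subset\M\ltimes N$, the boundary $\{y=0\}$ modulo $\Pi$ glues on a copy of the closed orbit space, so $M_c(\Pi)\simeq(\Pi\backslash N)\times[0,\infty)$ where the cusp sits at $y\to\infty$. Second, for the end count, I would fix an exhaustion by compact sets $K_i$ and show each complement $M_c(\Pi)\setminus K_i$ has exactly one unbounded connected component: because $\Pi\backslash N$ is a closed (compact, since $\Pi$ acts cocompactly on $N$ in the parabolic case) manifold, the region $(\Pi\backslash N)\times[r,\infty)$ is connected for every $r$, and for large $r$ it is disjoint from $K_i$; this isolates the single end $e$. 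Third, to realize the neighbourhoods as geodesically convex submanifolds, I would use horoballs: the preimage in $X$ of a horoball based at $\infty$, namely $b_\infty(\cdot,o)^{-1}(-\infty,r]$, is convex in $\tilde M=X$, is $\Pi$-invariant (since $\Pi$ preserves every horosphere based at $\infty$ by the definition of parabolic), and descends to a geodesically convex submanifold of $M_c(\Pi)$; shrinking $r$ gives a nested system of neighbourhoods of $e$.

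The main obstacle I anticipate is verifying geodesic convexity of the horoball neighbourhoods in the \emph{quotient} sense required by the paper's definition, i.e. that the preimage in the universal cover $X$ is convex. This reduces to the standard fact that horoballs in a Hadamard manifold of pinched negative curvature are convex, which holds here since $X=\hfn$ has curvature in $[-4,-1]$; the Busemann function $b_\infty(\cdot,o)$ is convex along geodesics, so its sublevel sets are convex. The only subtlety is checking that the resulting submanifolds of $M_c(\Pi)$ really are neighbourhoods of the end in the sense of Definition \ref{defi:end} (they contain $e(K_n)$ for some $n$) and that they are submanifolds with smooth boundary; smoothness of horospheres follows from smoothness of the Busemann function on $\hfn$, and the nesting $b_\infty^{-1}(-\infty,r]\supset b_\infty^{-1}(-\infty,r']$ for $r<r'$ supplies the required inclusions. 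Since this is cited from \cite[Proposition 4.4]{bowditch1995}, I would present the argument as a sketch and defer the detailed convexity estimates to that reference.
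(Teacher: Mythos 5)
There is a genuine gap here, and it sits exactly where the paper is most careful. Your argument rests on the parenthetical claim that $\Pi\backslash N$ is ``compact, since $\Pi$ acts cocompactly on $N$ in the parabolic case''. This is false in general: a discrete parabolic subgroup is only virtually nilpotent, and $\Pi\backslash N$ is compact precisely when $\Pi$ has \emph{maximal rank} in the sense of Definition \ref{def:maxran} (for instance $\Pi\cong\bb Z$ acting parabolically on $\bb H^3_{\bb R}$ gives $\Pi\backslash N\cong S^1\times\bb R$). This wrong compactness claim infects both halves of your proof. For the end count, your exhaustion argument only exhibits a single unbounded component in the cusp direction; when $\Pi\backslash N$ is noncompact one must also rule out additional ends in the $N$-directions, which is where the boundary piece $\Pi\backslash\Omega$ glued at $y=0$ genuinely enters. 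More seriously, your proposed system of convex neighbourhoods fails: when the rank is not maximal, the quotient of a horoball is \emph{not} a neighbourhood of the end in the sense of Definition \ref{defi:end}. Its complement in $M_c(\Pi)$ is $(\Pi\backslash N)\times[0,y_0)$, which is not relatively compact, and points with small $y$ lying far out in $\Pi\backslash N$ belong to $e(K_n)$ for every $n$ yet avoid every horoball quotient. The paper says this explicitly in the remark following Proposition \ref{prop:mrank}: a standard cusp region ``has to be much larger'' than a horoball quotient (see the rank 1 example in $\bb H^3$ of Figure \ref{fig:stacus}). Horoballs suffice only in the maximal rank case, and that is precisely the content of Proposition \ref{prop:mrank}, not of Proposition \ref{prop:end}.

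Note also that the paper gives no proof of this statement --- it is quoted from \cite[Proposition 4.4]{bowditch1995} --- so the work you defer to the reference is exactly the hard part you have skipped: Bowditch constructs $\Pi$-invariant geodesically convex end neighbourhoods that engulf a neighbourhood of the entire domain of discontinuity near the cusp (exploiting convexity of displacement-type functions in pinched negative curvature), rather than sublevel sets of the Busemann function. Your convexity discussion is correct as far as it goes --- Busemann functions are convex, so horoballs descend to geodesically convex subsets --- but it establishes convexity of sets that are not end neighbourhoods in the non-maximal-rank case. A minor point in the same direction: your coordinate bookkeeping is inconsistent, since you first describe the cusp direction as $y=e^{-t}\to 0$ ``approaching $\infty$'', whereas in the paper's half-space model $y\to 0$ is the ideal boundary $\Omega$ and the cusp sits at $y\to\infty$, as you correctly state later.
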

Following \cite{bowditch1995}, we define a \textit{standard cusp region} (with boundary). The definition of Bowditch is not explicitly written, but it is implicitly defined in \cite[Sec.5.1]{bowditch1995}. For more details on the real hyperbolic case, we refer to \cite[Sec.3.1]{bowditch_geometrical_1993} and \cite[Page 6]{guillarmou2012resolvent}
\begin{defi}\label{defi:cusp region}
	Let $\Gamma$ be a discrete subgroup of $G$. A standard cusp region of $M_c(\Gamma)$ is a closed subset $\cusp$ which is isometric to a closed subset $E$ of $M_c(\Pi)$, where $\Pi$ is a discrete parabolic subgroup of $\Gamma$ and the interior of $E$ is supposed to be a geodesically convex submanifold and a neighbourhood for the end of $M_c(\Pi)$. 
\end{defi}
We have a corollary of Proposition \ref{prop:end}.

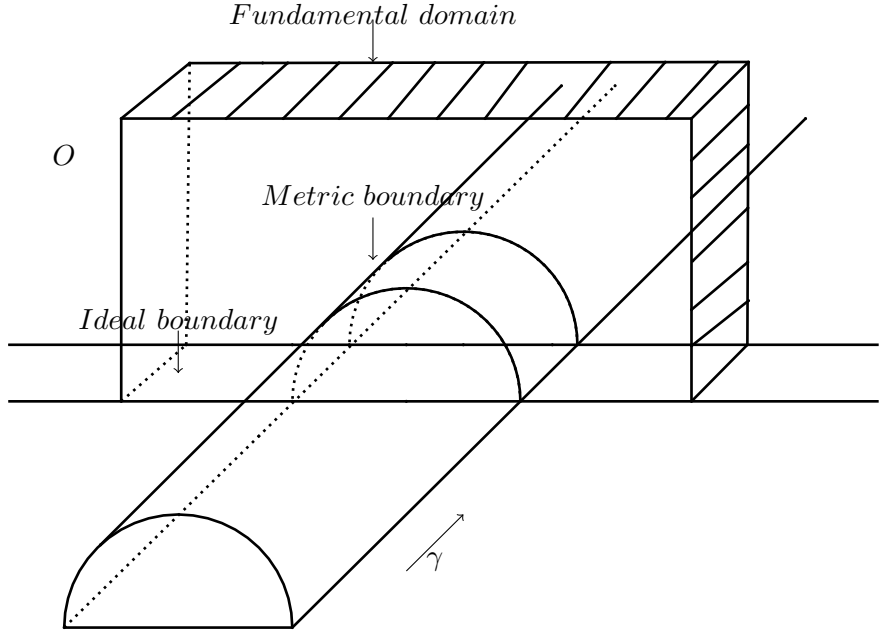
\begin{figure}\label{fig:stacus}
	\begin{center}
		\begin{tikzpicture}[scale =0.75]
		[line cap=round,line join=round,>=triangle 45,x=1cm,y=1cm]
		\clip(-4.981492655620974,-4.668612002520705) rectangle (10.282710898012887,7.982631013265871);\draw [line width=1pt,domain=-4.981492655620974:10.282710898012887] plot(\x,{(--2-0*\x)/2});\draw [line width=0.4pt,dotted,domain=-4.981492655620974:10.282710898012887] plot(\x,{(--2.56-0*\x)/2.56});\draw [shift={(2,0)},line width=1pt,dotted]  plot[domain=0:3.141592653589793,variable=\t]({1*2*cos(\t r)+0*2*sin(\t r)},{0*2*cos(\t r)+1*2*sin(\t r)});\draw [shift={(3,1)},line width=1pt,dotted]  plot[domain=0:3.141592653589793,variable=\t]({1*2*cos(\t r)+0*2*sin(\t r)},{0*2*cos(\t r)+1*2*sin(\t r)});\draw [line width=1pt] (-3,0)-- (-3,5);\draw [line width=1pt] (-3,5)-- (7,5);\draw [line width=1pt] (7,5)-- (7,0);\draw [shift={(3,1)},line width=1pt]  plot[domain=0:2.292451177659658,variable=\t]({1*2*cos(\t r)+0*2*sin(\t r)},{0*2*cos(\t r)+1*2*sin(\t r)});\draw [shift={(2,0)},line width=1pt]  plot[domain=0:2.390663591191853,variable=\t]({1*2*cos(\t r)+0*2*sin(\t r)},{0*2*cos(\t r)+1*2*sin(\t r)});\draw [line width=1pt] (0,-4)-- (9,5);\draw [line width=1pt,dotted] (-4,-4)-- (5.66,5.58);\draw [shift={(-2,-4)},line width=1pt]  plot[domain=0:3.141592653589793,variable=\t]({1*2*cos(\t r)+0*2*sin(\t r)},{0*2*cos(\t r)+1*2*sin(\t r)});\draw [line width=1pt] (-3.364636500720022,-2.537889463514262)-- (4.72,5.58);\draw [line width=1pt] (-4,-4)-- (0,-4);\draw [line width=1pt,dotted] (-1.86,1)-- (-1.8,5.98);\draw [line width=1pt] (7.98,1)-- (8,6);\draw [line width=1pt] (-1.8,5.98)-- (8,6);\draw [line width=1pt] (-3,5)-- (-1.8,5.98);\draw [line width=1pt] (7,5)-- (8,6);\draw [line width=1pt,dotted] (-3,0)-- (-1.86,1);\draw [line width=1pt] (7,0)-- (7.98,1);\draw [line width=1pt,domain=-4.981492655620974:10.282710898012887] plot(\x,{(-0-0*\x)/10});\draw [line width=1pt] (-2.12,5)-- (-0.9198812166546578,5.981796160782337);\draw [line width=1pt] (-1.16,5)-- (-0.07996634749542901,5.9835102727602125);\draw [line width=1pt] (-0.16,5)-- (0.8199482717689639,5.985346833207691);\draw [line width=1pt] (0.82,5)-- (1.760025989062936,5.987265359161352);\draw [line width=1pt] (1.68,5)-- (2.5001453554962287,5.988775806847951);\draw [line width=1pt] (2.54,5)-- (3.360100957513713,5.990530818280639);\draw [line width=1pt] (3.38,5)-- (4.119893711396454,5.992081415737543);\draw [line width=1pt] (4.78,5)-- (5.58005089524825,5.995061328357648);\draw [line width=1pt] (5.68,5)-- (6.539965264617806,5.99702033727473);\draw [line width=1pt] (6.56,5)-- (7.460002249053523,5.998897963773578);\draw [line width=1pt] (7,4.26)-- (7.996960688628982,5.2401721572454845);\draw [line width=1pt] (7,3.6)-- (7.994160733428266,4.540183357066287);\draw [line width=1pt] (7.02,2.48)-- (7.9896798451224775,3.41996128061951);\draw [line width=1pt] (7,1.62)-- (7.985840866546136,2.4602166365338154);\draw [line width=1pt] (7,0.98)-- (8,1.78);
		\begin{scriptsize}\draw [fill=xdxdff] (0,1) circle (0.5pt);\draw [fill=ududff] (2,1) circle (0.5pt);\draw [fill=xdxdff] (4.56,1) circle (0.5pt);\draw [fill=uuuuuu] (0,0) circle (0.5pt);\draw [fill=xdxdff] (1,1) circle (0.5pt);\draw [fill=xdxdff] (4,0) circle (0.5pt);\draw [fill=xdxdff] (5,1) circle (0.5pt);\draw [fill=xdxdff] (0.5378894635142621,1.3646365007200225) circle (0.5pt);\draw [fill=xdxdff] (1.6787442102159507,2.5014270338455105) circle (0.5pt);\draw [fill=xdxdff] (-3,0) circle (0.5pt);\draw [fill=ududff] (-3,5) circle (0.5pt);\draw [fill=ududff] (7,5) circle (0.5pt);\draw [fill=xdxdff] (7,0) circle (0.5pt);\draw [fill=xdxdff] (2,0) circle (0.5pt);\draw [fill=uuuuuu] (3,1) circle (0.5pt);\draw [fill=black] (0,-4) circle (0.5pt);\draw [fill=ududff] (9,5) circle (0.5pt);\draw [fill=black] (-4,-4) circle (0.5pt);\draw [fill=ududff] (5.66,5.58) circle (0.5pt);\draw [fill=xdxdff] (-3.364636500720022,-2.537889463514262) circle (0.5pt);\draw [fill=ududff] (4.72,5.58) circle (0.5pt);\draw [fill=xdxdff] (-0.5857548030168775,-0.6140301255591809) circle (0.5pt);\draw [fill=xdxdff] (-1.86,1) circle (0.5pt);\draw [fill=ududff] (-1.8,5.98) circle (0.5pt);\draw [fill=xdxdff] (7.98,1) circle (0.5pt);\draw [fill=ududff] (8,6) circle (0.5pt);\draw [fill=black] (-2.12,5) circle (0.5pt);\draw [fill=black] (-0.5202094118725038,5.9826118175267915) circle (0.5pt);\draw [fill=black] (-0.9198812166546578,5.981796160782337) circle (0.5pt);\draw [fill=black] (-1.16,5) circle (0.5pt);\draw [fill=black] (-0.07996634749542901,5.9835102727602125) circle (0.5pt);\draw [fill=black] (-0.16,5) circle (0.5pt);\draw [fill=black] (0.8199482717689639,5.985346833207691) circle (0.5pt);\draw [fill=black] (0.82,5) circle (0.5pt);\draw [fill=black] (1.760025989062936,5.987265359161352) circle (0.5pt);\draw [fill=black] (1.68,5) circle (0.5pt);\draw [fill=black] (2.5001453554962287,5.988775806847951) circle (0.5pt);\draw [fill=black] (2.54,5) circle (0.5pt);\draw [fill=black] (3.360100957513713,5.990530818280639) circle (0.5pt);\draw [fill=black] (3.38,5) circle (0.5pt);\draw [fill=black] (4.119893711396454,5.992081415737543) circle (0.5pt);\draw [fill=black] (4.78,5) circle (0.5pt);\draw [fill=black] (5.58005089524825,5.995061328357648) circle (0.5pt);\draw [fill=black] (5.68,5) circle (0.5pt);\draw [fill=black] (6.539965264617806,5.99702033727473) circle (0.5pt);\draw [fill=black] (6.56,5) circle (0.5pt);\draw [fill=black] (7.460002249053523,5.998897963773578) circle (0.5pt);\draw [fill=black] (7,4.26) circle (0.5pt);\draw [fill=black] (7.996960688628982,5.2401721572454845) circle (0.5pt);\draw [fill=black] (7,3.6) circle (0.5pt);\draw [fill=black] (7.994160733428266,4.540183357066287) circle (0.5pt);\draw [fill=black] (7.02,2.48) circle (0.5pt);\draw [fill=black] (7.9896798451224775,3.41996128061951) circle (0.5pt);\draw [fill=black] (7,1.62) circle (0.5pt);\draw [fill=black] (7.985840866546136,2.4602166365338154) circle (0.5pt);\draw [fill=black] (7,0.98) circle (0.5pt);\draw [fill=black] (8,1.78) circle (0.5pt);\end{scriptsize}
		\draw ({sqrt(2)},6.5) node[above]{$Fundamental\ domain$};
		\draw [<-] ({sqrt(2)},6) -- ({sqrt(2)},6.75);
		\draw(-4,4) node[above]{$\cusp$};
		\draw (-2,1) node[above]{$Ideal\ boundary$};
		\draw [->] (-2,1.25) -- (-2,0.5);
		\draw ({sqrt(2)},4) node[below] {$Metric\  boundary$};
		\draw [->] ({sqrt(2)},3.25) -- ({sqrt(2)},2.5);
		\draw (2.5,-2.5) node[below]{$\gamma$};
		\draw [->] (2,-3) -- (3,-2);
		\end{tikzpicture}
	\end{center}
	\caption{A standard cusp region of rank one}
\end{figure}
\begin{prop}\label{prop:cuspregion} 
	Let $\Gamma$ be a discrete subgroup of $G$, and let $\cusp$ be a standard cusp region of $M_c(\Gamma)$. There exists a smaller standard cusp region $\cusp'$ such that $\cusp'$ is contained in the interior of $\cusp$.
\end{prop}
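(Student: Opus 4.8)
The plan is to transfer the problem to the cusp model $M_c(\Pi)$ and there produce a deeper convex neighbourhood of the unique end whose closure still lies inside the original one; the input that the end is unique and admits geodesically convex neighbourhoods is supplied entirely by Proposition \ref{prop:end}. Write $\Phi\colon \cusp \to E$ for the isometry of Definition \ref{defi:cusp region}, where $E\subset M_c(\Pi)$ is closed, $\mathring E$ is a geodesically convex submanifold, and $\mathring E$ is a neighbourhood for the (unique, by Proposition \ref{prop:end}) end $e$ of $M_c(\Pi)$. Since standard cusp regions arise by restricting the local isometry induced by $\Pi\hookrightarrow\Gamma$, the map $\Phi$ respects ambient interiors and boundaries; hence it suffices to build a closed $E'\subset \mathring E$ with $\mathring{E'}$ geodesically convex and a neighbourhood for $e$, and then set $\cusp' = \Phi^{-1}(E')$.

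First I would invoke Proposition \ref{prop:end} to fix a system of neighbourhoods $\{e(K_i)\}_{i\ge 1}$ for $e$, where $K_1\subset K_2\subset\cdots$ is a compact exhaustion of $M_c(\Pi)$ that we may arrange to satisfy $K_i\subset \mathring K_{i+1}$, and where each $e(K_i)$ is a geodesically convex submanifold with smooth boundary. Because $\mathring E$ is a neighbourhood for $e$, there is an index $n$ with $e(K_n)\subset \mathring E$. I then propose to take
\[
E' \;=\; \overline{e(K_{n+1})},\qquad \mathring{E'}=e(K_{n+1}).
\]
The equality $\mathring{E'}=e(K_{n+1})$ uses that a geodesically convex open submanifold with smooth boundary is regular open; and $e(K_{n+1})$ is automatically a convex neighbourhood for $e$, belonging to the chosen system.

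The heart of the argument is the inclusion $\overline{e(K_{n+1})}\subset e(K_n)\subset\mathring E$. Two facts combine here. From $K_n\subset K_{n+1}$ the defining monotonicity of an end gives $e(K_{n+1})\subset e(K_n)$. Moreover the topological boundary satisfies $\partial e(K_{n+1})\subset \partial K_{n+1}$, which is disjoint from $K_n$ because $K_n\subset \mathring K_{n+1}$. Thus every point of $\partial e(K_{n+1})$ lies in the open set $M_c(\Pi)\setminus K_n$ and is a limit of points of $e(K_{n+1})\subset e(K_n)$; since $e(K_n)$ is a connected component of $M_c(\Pi)\setminus K_n$, hence closed in that open set, such a limit point already lies in $e(K_n)$. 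Therefore $\overline{e(K_{n+1})}=e(K_{n+1})\cup\partial e(K_{n+1})\subset e(K_n)$, as required.

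Finally I would set $\cusp'=\Phi^{-1}(E')$ and check the bookkeeping: $E'$ is closed in $M_c(\Pi)$ and contained in $E$, so $\cusp'$ is closed in $\cusp$ and hence in $M_c(\Gamma)$; the set $\mathring{\cusp'}=\Phi^{-1}(\mathring{E'})$ is geodesically convex and a neighbourhood for the end; and $\cusp'=\Phi^{-1}(E')\subset\Phi^{-1}(\mathring E)=\mathring{\cusp}$. This exhibits $\cusp'$ as a standard cusp region, for the same parabolic subgroup $\Pi$, contained in the interior of $\cusp$. I expect the only delicate point to be the regularity claim $\mathring{\overline{e(K_{n+1})}}=e(K_{n+1})$ — that passing to the closure and back to the interior does not enlarge the convex neighbourhood — which is exactly why it matters that Proposition \ref{prop:end} furnishes neighbourhoods with genuine submanifold (smooth) boundary rather than arbitrary open convex sets.
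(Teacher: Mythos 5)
Your proof is correct and follows essentially the same route as the paper: both use Proposition \ref{prop:end} to select a deeper geodesically convex neighbourhood of the unique end of $M_c(\Pi)$ whose closure lies in $\mathring{E}$ (the paper phrases this as choosing a larger compact $K_1$ with $\mathring{K_1}\supset (\mathring E)^c$ and $K_1^c$ convex, then setting $E_1=\overline{K_1^c}$), and then pull back through the isometry to obtain $\cusp'$. Your version merely makes explicit the closure inclusion $\overline{e(K_{n+1})}\subset e(K_n)$ and the regular-openness bookkeeping that the paper leaves implicit.
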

\begin{proof}
	By Definition \ref{defi:cusp region}, the cusp region $\cusp$ is isometric to $E$ and the complement of the interior of $E$ is a compact subset $K$ of $M_c(\Pi)$. Denote by $f$ the isometric map from $\cusp$ to $E$. By Proposition \ref{prop:end}, $M_c(\Pi)$ has only one end and we have a larger compact subset $K_1$, whose interior contains $K$ and whose complement is geodesically convex. Let $E_1=\overline{K_1^c}$ and let $\cusp'$ be the corresponding subset of $\cusp$, that is $\cusp'=f^{-1}(E_1)$.
\end{proof}
In order to give more intuitive, we add some explications.
For a symmetric space of real rank one, we use the half space model introduced at the end of Section \ref{sec:rankone}, that is a diffeomorphism from $N\times \R_{>0}$ to $X$. Recall that $\Pi$ is a parabolic subgroup of $G$ which fixes $\infty$. It is a subgroup of $\M\ltimes N$, which acts isometrically on $N$ equipped with the metric $g_t$. We have
\[M_c(\Pi)\simeq(\Pi\backslash N)\times \bb R_{\geq 0} \text{ as topological spaces} \]
Then our standard cusp region is isometric to the complement of an open relatively compact subset in $M_c(\Pi)$, with additional convex condition on the interior. (see Figure \ref{fig:stacus}.)

Now, we give a description of the standard cusp region in maximal rank case. This will be used in the proof of the main theorem for geometrically finite manifolds with only maximal rank cusps. The main idea is the same for manifolds with only maximal ranks cusps and the general cases.

\begin{defi}[Maximal rank]\label{def:maxran}
	Let $\Pi$ be a discrete parabolic subgroup of $G$. We call $\Pi$ a subgroup of maximal rank if the quotient $\Pi\backslash N$ is compact.
	
	A cusp region is said to be maximal rank, if the corresponding discrete parabolic subgroup is of maximal rank
\end{defi}
\begin{rem}
	We explain our definition as follows. The rank of a nilpotent group is defined to the sum of the rank of its central series.
	
	For real hyperbolic case, $\Pi$ is a discrete subgroup of $Isom(\bb R^{n-1})=O(n-1)\ltimes\bb R^{n-1}$. By Bieberbach's theorem (see for example \cite[Theorem 2.2.5]{bowditch_geometrical_1993}), the group $\Pi$ is virtually abelian. The rank of $\Pi$ is defined to be the rank of its maximal normal abelian subgroup. Hence when $\Pi$ attends maximal rank, the quotient space is compact.
	
	For rank one symmetric spaces, by Margulis' Lemma, the discrete parabolic subgroup $\Pi$ is virtually nilpotent. As in \cite[Lemma 3.4]{corletteIozzi1999}, for a virtually nilpotent discrete subgroup $\Pi<\M\ltimes N$, we can find a subgroup $\Pi_1<\Pi$ of finite index which is nilpotent, and there exists a subgroup $\Pi_2< N$ which is isomorphic to $\Pi_1$ and satisfies $\Pi_1\cdot x=\Pi_2\cdot x$ for some $x$ in $N$. This means that the $\Pi_1$, $\Pi_2$-orbits of $x$ are the same in $N$. Let $N_2$ be the Zariski closure of $\Pi_2$. Then the rank of $\Pi$ is the same as the rank of $N_2$. When $\Pi$ attends maximal rank, $N_2$ coincides with $N$. Then $\Pi_2$ is a cocompact subgroup of $N$ because $N$ is nilpotent. Due to $\Pi_2\cdot x=\Pi_1\cdot x$, every point in $N$ has a bounded distance to the orbit $\Pi_1\cdot x$. Hence $\Pi_1\backslash N$ is compact, so is $\Pi\backslash N$. 
\end{rem}
\begin{prop}[Maximal rank version of Proposition \ref{prop:cuspregion}]\label{prop:mrank}
	Let $\Gamma$ be a discrete subgroup of $G$.  Let $\cusp$ be a standard cusp region of $M_c(\Gamma)$ with maximal rank. Then we can find a smaller cusp region $\cusp_1$, which is isomorphic to the quotient of a horoball by a discrete parabolic subgroup of $\Gamma$.
\end{prop}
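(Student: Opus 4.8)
The plan is to realize the smaller region explicitly as a horoball quotient, exploiting that the maximal rank hypothesis makes $\Pi\backslash N$ compact. First I would record the setup from Definition \ref{defi:cusp region}: there is an isometry $f:\cusp\to E$ onto a closed subset $E$ of $M_c(\Pi)$, where $\Pi<\Gamma$ is a discrete parabolic subgroup fixing $\infty\in X_I$, and $\mathring E$ is a geodesically convex neighbourhood for the unique end of $M_c(\Pi)$ (Proposition \ref{prop:end}). Using the half-space model $N\times\bb R\to X$ of Section \ref{sec:rankone}, which descends to $M_c(\Pi)\simeq(\Pi\backslash N)\times\bb R_{\geq 0}$ with the cusp end corresponding to $t\to+\infty$, I would introduce, for each $s\in\bb R$, the horoball $H_s$ based at $\infty$ whose preimage in $X$ is $\{(\nu,t):t\geq s\}$. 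Since $\Pi$ is parabolic and hence preserves every horosphere based at $\infty$, this preimage is $\Pi$-invariant, so $H_s=(\Pi\backslash N)\times[s,\infty)$ is a well-defined subset of $M_c(\Pi)$.

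Next I would verify the two defining properties of a standard cusp region for $H_s$. For convexity: the preimage of $H_s$ in the universal cover $X=\hfn$ is a genuine horoball, which is convex because $X$ is a Hadamard manifold of pinched negative curvature; hence $H_s$ is geodesically convex in the sense of the definition. For the end property: since $\Pi\backslash N$ is compact (maximal rank), the complement of $H_s$ in $M_c(\Pi)$ is relatively compact, so $H_s$ is a neighbourhood for the unique end given by Proposition \ref{prop:end}; conversely any neighbourhood of the end contains $H_s$ for all sufficiently large $s$.

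Finally I would match scales. Since $\mathring E$ is a neighbourhood for the end, by the converse just noted it contains $H_s$ for all sufficiently large $s$; fixing such an $s$, the horoball quotient $H_s$ is a standard cusp region contained in $\mathring E$. Transporting through the isometry, $\cusp_1:=f^{-1}(H_s)$ is a smaller cusp region lying in $\mathring\cusp$ and isometric to $H_s=\Pi\backslash(\text{horoball})$, the quotient of a horoball by the discrete parabolic subgroup $\Pi<\Gamma$, as required.

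The step I expect to be the crux is the end property in the second paragraph, and this is precisely where maximal rank enters: only when $\Pi\backslash N$ is compact does a horoball quotient have relatively compact complement and therefore qualify as a neighbourhood of the end. Without this hypothesis the cross-section $\Pi\backslash N$ is noncompact, so $(\Pi\backslash N)\times[s,\infty)$ fails to be an end-neighbourhood and one cannot replace the convex region $E$ by an honest horoball quotient — which explains why this maximal-rank statement is needed separately from Proposition \ref{prop:cuspregion}. The only other point to handle with care is the convexity of horoballs in $\hfn$, which is a standard consequence of the non-positivity of the curvature.
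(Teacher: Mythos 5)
Your proof is correct and takes essentially the same route as the paper's: identify $\cusp$ with a closed subset $E$ of $M_c(\Pi)$ via the isometry, pass to the quotient of a sufficiently deep horoball $(\Pi\backslash N)\times[s,\infty)$ (the paper normalizes so that $E^c\subset(\Pi\backslash N)\times[0,1]$ and takes $s=1$), use the maximal-rank compactness of $\Pi\backslash N$ to see that this horoball quotient has relatively compact complement and hence is a legitimate standard cusp region, and pull back through the isometry. The only difference is cosmetic: you explicitly verify the geodesic convexity of the horoball quotient (via convexity of horoballs in the Hadamard manifold $X$) and the end-neighbourhood property, both of which the paper asserts without proof.
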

\begin{proof}
	By Definition \ref{defi:cusp region}, there is a discrete subgroup $\Pi$ of $\Gamma$, such that $\cusp$ is isometric to a closed subset $E$ of $M_c(\Pi)$, whose interior is a neighbourhood for the end of $M_c(\Pi)$. By Definition \ref{defi:end} and Proposition \ref{prop:end}, the complement of $E$ in $M_c(\Pi)$ is relatively compact. Under the half space model, we can suppose that $E^c\subset (\Pi\backslash N)\times[0,1]$.
	Let $B$ be the horoball, which is homeomorphic to $N\times \R_{\geq 1}$. Then the quotient $\Pi\backslash B$ is a subset of $E$. Due to maximal rank, the quotient $\Pi\backslash N$ is compact. Hence
	\[M_c(\Pi)-\Pi\backslash\mathring{ B}\simeq(\Pi\backslash N)\times [0,1] \]
	is compact. The quotient $\Pi\backslash B$ is geodesically convex. Let $\cusp_1$ be the preimage of $\Pi\backslash B$ in $\cusp$ under the isometric map from $\cusp$ to $E$. The proof is complete.
\end{proof}

In later proof, for cusps of maximal rank, we will always take the quotient of horoball as a standard cusp region.
\begin{rem}
	If the cusp region is not of maximal rank, then the quotient of a horoball by the corresponding parabolic group is not a neighbourhood of the parabolic end in the sense of Definition \ref{defi:end}. Therefore it cannot by taken as a standard cusp region, which has to be much larger.  See the example of a standard cusp region for a rank 1 parabolic end in $\bb H^3$ given in Figure \ref{fig:stacus}.
\end{rem}
\subsection{A good partition of unity}
\label{sec:goopar}
\begin{defi}[Geometrical finiteness]\label{def:geofinite}
	A discrete subgroup $\Gamma$ in $G$ is called geometrically finite, if $M_c(\Gamma)$ is the union of a compact set and a finite number of standard cusp regions $\cusp_i$ for $1\leq i\leq k$, that is to say $M_c(\Gamma)-\bigcup_{1\leq i\leq k} \mathring{\cusp_i}$ is compact.
\end{defi}
This definition is not explicitly written in \cite{bowditch1995}, but is given in the discussion after \cite[Def. F1]{bowditch1995}. (See also \cite[Def.(GF1)]{bowditch_geometrical_1993} for the real hyperbolic case. In \cite{bowditch_geometrical_1993}, Bowditch explained the equivalence of the definition in the introduction and Definition \ref{def:geofinite} for the real hyperbolic case.) By \cite[Lemma 6.2]{bowditch1995}, if $\Gamma$ is geometrically finite, then there exist standard cusp regions $\cusp_i$ for $1\leq i\leq k$, such that $\cusp_i$ are pairwise disjoint. For the purpose of the exposition, we can limit our consideration to the case that there is at most one end; the results hold and the methods of proof work for general cases.

For a real number $r>0$, we define the $r$-neighbourhood of a set $Q$ in $X$ by  $N_r(Q)=\{x\in X|d(x,Q)\leq r\}$. Let $W_r=N_r(\rm{hull}(\Lambda(\Gamma))\cap X)$ be the $r$-neighbourhood of the convex hull of the limit set $\Lambda(\Gamma)$. Let $C(M)$ be the convex core defined by $C(M)=\Gamma\backslash( \rm{hull}(\Lambda(\Gamma))\cap X)$. Let $C_{r}(M)=\Gamma\backslash W_r$ be the $r$-neighbourhood of the convex core. One problem here is that the boundary of $C_{r}(M)$ may not be $C^\infty$-smooth, but is only $C^{1,1}$-smooth (see for instance \cite{walter76} or \cite{federer1959}). To overcome this difficulty, we use a result of \cite[Prop.6]{parkkonenpaulin2012}.
(In the statement of Proposition 6 in \cite{parkkonenpaulin2012}, they do not have a $\Gamma$-invariant condition. But if we start from a $\Gamma$-invariant set, their method automatically gives us a $\Gamma$-invariant set.) We can find a closed convex subset  $W'$ with $C^\infty$ smooth boundary such that $W_1\subset W'\subset W_{3/2}$ and $W'$ is also $\Gamma$-invariant. Let $\convex=\Gamma\backslash W'$. Then $\convex\subset \mathring{C_2(M)}$.

Let $\cusp$ be a standard cusp region of the unique cusp in $M_c$. We have a smaller standard cusp region $\cusp'$ such that $\cusp'\subset\mathring{\cusp}$.

We can cover the geometrically finite manifold with three open sets
\begin{equation}
M_c=\mathring{\cusp}\cup(\convex^c-\cusp')\cup(\mathring{C_{2}(M)}-\cusp') ,
\end{equation}
where $\convex^c$ is the complement in $M_c$. For the simplicity of the notation, we write 
$$M_1=\mathring{\cusp}, M_2=\convex^c-\cusp' \text{ and } M_3=\mathring{C_{2}(M)}-\cusp'.$$ 
Since $M_c$ inherits the differential structure from $X\cup \Omega$,  the covering is about a differential manifold with boundary. We can find a smooth partition of unity subordinate to this cover, written as $\{\bar{\varphi_1},\  \bar{\varphi_2},\ \bar{\varphi_3}\}$, which is smooth on the boundary. Here $M_1,M_2$ may intersect the ideal boundary $M_I=M_c- M$.
\definecolor{qqqqqq}{rgb}{0,0,0}
\begin{figure}
	\begin{center}
\begin{tikzpicture}
[line cap=round,line join=round,>=triangle 45,x=1cm,y=1cm]\clip(-10.61738836520274,-4.468047622401873) rectangle (6.437457615263054,4.322335548146636);\draw [rotate around={-43.70783311375007:(-2.952139588352604,-2.7726201372157995)},line width=1pt] (-2.952139588352604,-2.7726201372157995) ellipse (1.1085578566372463cm and 0.5865155765298693cm);\draw [line width=1pt] (3.9,0.7)--  (3.9,0.7)-- (3.78,0.7)-- (3.66,0.7)-- (3.52,0.7)-- (3.44,0.7)-- (3.34,0.7)-- (3.22,0.7)-- (3.14,0.7)-- (3.02,0.7)-- (2.92,0.7)-- (2.84,0.7)-- (2.74,0.7)-- (2.64,0.7)-- (2.56,0.7)-- (2.48,0.7)-- (2.4,0.7)-- (2.32,0.72)-- (2.24,0.72)-- (2.16,0.72)-- (2.06,0.74)-- (1.92,0.74)-- (1.84,0.74)-- (1.74,0.76)-- (1.62,0.76)-- (1.56,0.78)-- (1.46,0.78)-- (1.32,0.8)-- (1.24,0.82)-- (1.14,0.84)-- (1.04,0.84)-- (0.96,0.86)-- (0.88,0.88)-- (0.82,0.9)-- (0.72,0.92)-- (0.66,0.94)-- (0.58,0.96)-- (0.52,0.98)-- (0.46,1)-- (0.4,1.02)-- (0.34,1.04)-- (0.28,1.08)-- (0.22,1.1)-- (0.14,1.14)-- (0.06,1.22)-- (0,1.26)-- (-0.04,1.32)-- (-0.1,1.38)-- (-0.14,1.44)-- (-0.18,1.5)-- (-0.24,1.56)-- (-0.28,1.62)-- (-0.3,1.68)-- (-0.32,1.74)-- (-0.34,1.8)-- (-0.38,1.86)-- (-0.38,1.94)-- (-0.4,2)-- (-0.42,2.06)-- (-0.42,2.14)-- (-0.42,2.22)-- (-0.42,2.3)-- (-0.42,2.38)-- (-0.42,2.46)-- (-0.4,2.52)-- (-0.38,2.58)-- (-0.36,2.64)-- (-0.32,2.7)-- (-0.3,2.76)-- (-0.24,2.82)-- (-0.18,2.86)-- (-0.12,2.9)-- (-0.06,2.94)-- (0,2.98)-- (0.06,3)-- (0.14,3.04)-- (0.2,3.06)-- (0.28,3.08)-- (0.36,3.08)-- (0.44,3.08)-- (0.52,3.08)-- (0.6,3.08)-- (0.7,3.06)-- (0.78,3.04)-- (0.84,3.02)-- (0.92,2.98)-- (0.98,2.96)-- (1.04,2.92)-- (1.1,2.88)-- (1.16,2.84)-- (1.2,2.78)-- (1.24,2.72)-- (1.28,2.66)-- (1.32,2.6)-- (1.34,2.54)-- (1.38,2.48)-- (1.4,2.42)-- (1.44,2.36)-- (1.48,2.3)-- (1.52,2.24)-- (1.58,2.18)-- (1.62,2.12)-- (1.68,2.08)-- (1.74,2.04)-- (1.8,2)-- (1.86,1.96)-- (1.92,1.92)-- (1.98,1.88)-- (2.04,1.84)-- (2.1,1.8)-- (2.16,1.76)-- (2.22,1.74)-- (2.28,1.7)-- (2.34,1.66)-- (2.4,1.62)-- (2.46,1.58)-- (2.52,1.54)-- (2.58,1.5)-- (2.64,1.48)-- (2.7,1.44)-- (2.78,1.4)-- (2.84,1.38)-- (2.9,1.34)-- (2.96,1.32)-- (3.02,1.3)-- (3.08,1.26)-- (3.14,1.24)-- (3.2,1.22)-- (3.26,1.2)-- (3.32,1.16)-- (3.38,1.14)-- (3.44,1.12)-- (3.5,1.08)-- (3.56,1.06)-- (3.62,1.02)-- (3.68,1)-- (3.74,0.96)-- (3.8,0.94)-- (3.86,0.9)-- (3.92,0.88)-- (3.98,0.84)-- (4.04,0.82)-- (4.1,0.78)-- (4.16,0.76)-- (4.22,0.72)-- (4.28,0.68)-- (4.34,0.66)-- (4.4,0.62);\draw [line width=1pt] (3.82,0.7)--  (3.82,0.7)-- (3.9,0.7)-- (3.98,0.7)-- (4.06,0.7)-- (4.12,0.68)-- (4.2,0.68)-- (4.26,0.66)-- (4.32,0.64)-- (4.38,0.62)-- (4.44,0.58)-- (4.5,0.56);\draw [line width=1pt] (-3.7,-1.96)--  (-3.7,-1.96)-- (-3.7,-1.84)-- (-3.7,-1.76)-- (-3.7,-1.66)-- (-3.7,-1.54)-- (-3.7,-1.4)-- (-3.7,-1.28)-- (-3.7,-1.14)-- (-3.7,-1)-- (-3.7,-0.88)-- (-3.72,-0.76)-- (-3.74,-0.62)-- (-3.78,-0.52)-- (-3.82,-0.38)-- (-3.86,-0.26)-- (-3.94,-0.12)-- (-4,0)-- (-4.08,0.12)-- (-4.14,0.22)-- (-4.22,0.34)-- (-4.3,0.44)-- (-4.36,0.54)-- (-4.46,0.64)-- (-4.52,0.72)-- (-4.62,0.82)-- (-4.72,0.9)-- (-4.8,0.98)-- (-4.88,1.08)-- (-4.96,1.16)-- (-5.04,1.22)-- (-5.1,1.3)-- (-5.18,1.36)-- (-5.26,1.44)-- (-5.36,1.5)-- (-5.44,1.56)-- (-5.52,1.64)-- (-5.62,1.7)-- (-5.72,1.76)-- (-5.82,1.84)-- (-5.94,1.9)-- (-6.02,1.98)-- (-6.12,2.04)-- (-6.22,2.1)-- (-6.3,2.16)-- (-6.4,2.22)-- (-6.5,2.26)-- (-6.6,2.32)-- (-6.68,2.36)-- (-6.78,2.4)-- (-6.86,2.44)-- (-6.92,2.46)-- (-7.06,2.52)-- (-7.14,2.54)-- (-7.22,2.56)-- (-7.32,2.6)-- (-7.44,2.62)-- (-7.52,2.62)-- (-7.6,2.62)-- (-7.68,2.62)-- (-7.76,2.62)-- (-7.84,2.62)-- (-7.76,2.62)-- (-7.68,2.62)-- (-7.6,2.62)-- (-7.52,2.62)-- (-7.44,2.62)-- (-7.36,2.62)-- (-7.28,2.62)-- (-7.22,2.6)-- (-7.14,2.6)-- (-7.04,2.6)-- (-6.94,2.6)-- (-6.84,2.6)-- (-6.76,2.6)-- (-6.68,2.6)-- (-6.62,2.58)-- (-6.52,2.58)-- (-6.42,2.58)-- (-6.34,2.58)-- (-6.28,2.56)-- (-6.2,2.56)-- (-6.12,2.56)-- (-6.02,2.56)-- (-5.96,2.54)-- (-5.86,2.54)-- (-5.78,2.52)-- (-5.68,2.52)-- (-5.6,2.5)-- (-5.52,2.5)-- (-5.42,2.48)-- (-5.32,2.48)-- (-5.22,2.48)-- (-5.16,2.46)-- (-5.08,2.46)-- (-5,2.46)-- (-4.94,2.44)-- (-4.86,2.44)-- (-4.8,2.42)-- (-4.72,2.42)-- (-4.64,2.42)-- (-4.56,2.42)-- (-4.5,2.4)-- (-4.42,2.4)-- (-4.34,2.4)-- (-4.28,2.38)-- (-4.2,2.38)-- (-4.12,2.38)-- (-4.02,2.38)-- (-3.96,2.36)-- (-3.88,2.36)-- (-3.82,2.34)-- (-3.76,2.32)-- (-3.68,2.3)-- (-3.6,2.3)-- (-3.52,2.28)-- (-3.44,2.26)-- (-3.36,2.24)-- (-3.28,2.24)-- (-3.2,2.22)-- (-3.12,2.2)-- (-3.04,2.18)-- (-2.96,2.18)-- (-2.9,2.16)-- (-2.82,2.14)-- (-2.74,2.14)-- (-2.66,2.14)-- (-2.58,2.14)-- (-2.5,2.14)-- (-2.42,2.14)-- (-2.34,2.14)-- (-2.26,2.14)-- (-2.18,2.14)-- (-2.1,2.14)-- (-2.02,2.14)-- (-1.94,2.14)-- (-1.86,2.16)-- (-1.78,2.16)-- (-1.72,2.18)-- (-1.64,2.18)-- (-1.58,2.2)-- (-1.52,2.22)-- (-1.44,2.22)-- (-1.38,2.24)-- (-1.32,2.26)-- (-1.24,2.28)-- (-1.18,2.3)-- (-1.1,2.3)-- (-1.04,2.34)-- (-0.98,2.36)-- (-0.9,2.4)-- (-0.84,2.42)-- (-0.76,2.44)-- (-0.7,2.46)-- (-0.64,2.48)-- (-0.58,2.5)-- (-0.52,2.54)-- (-0.46,2.56)-- (-0.4,2.6)-- (-0.36,2.66)-- (-0.32,2.72)-- (-0.28,2.78);\draw [rotate around={59.76110146492801:(-4.820349201932394,1.861481227852139)},line width=1pt, dotted] (-4.820349201932394,1.861481227852139) ellipse (0.6229664764385643cm and 0.37382715222752294cm);\draw [line width=1pt] (4.44,0.56)--  (4.44,0.56)-- (4.36,0.56)-- (4.28,0.56)-- (4.18,0.56)-- (4.1,0.54)-- (4.02,0.52)-- (3.92,0.52)-- (3.86,0.5)-- (3.76,0.48)-- (3.68,0.44)-- (3.62,0.42)-- (3.54,0.4)-- (3.48,0.38)-- (3.42,0.36)-- (3.36,0.34)-- (3.28,0.32)-- (3.2,0.3)-- (3.1,0.28)-- (3.04,0.26)-- (2.98,0.24)-- (2.92,0.22)-- (2.86,0.2)-- (2.76,0.18)-- (2.7,0.16)-- (2.64,0.14)-- (2.58,0.12)-- (2.52,0.1)-- (2.46,0.08)-- (2.4,0.06)-- (2.34,0.02)-- (2.28,-0.02)-- (2.22,-0.06)-- (2.18,-0.12)-- (2.12,-0.18)-- (2.06,-0.24)-- (2.02,-0.3)-- (1.96,-0.34)-- (1.94,-0.4)-- (1.9,-0.46)-- (1.88,-0.52)-- (1.86,-0.58)-- (1.82,-0.64)-- (1.8,-0.72)-- (1.78,-0.78)-- (1.76,-0.84)-- (1.74,-0.92)-- (1.72,-0.98)-- (1.72,-1.06)-- (1.72,-1.14)-- (1.72,-1.22)-- (1.72,-1.3)-- (1.72,-1.38)-- (1.72,-1.46)-- (1.72,-1.54)-- (1.72,-1.62)-- (1.74,-1.68)-- (1.76,-1.74)-- (1.76,-1.84)-- (1.78,-1.9)-- (1.78,-1.98)-- (1.8,-2.04)-- (1.8,-2.12)-- (1.82,-2.18)-- (1.84,-2.24)-- (1.86,-2.3)-- (1.88,-2.36)-- (1.94,-2.38)-- (2.02,-2.38)-- (2.08,-2.34)-- (2.14,-2.32)-- (2.2,-2.28)-- (2.26,-2.26)-- (2.32,-2.22)-- (2.38,-2.2)-- (2.44,-2.16)-- (2.48,-2.1)-- (2.52,-2.04)-- (2.54,-1.98)-- (2.56,-1.92)-- (2.6,-1.86)-- (2.62,-1.78)-- (2.64,-1.72)-- (2.66,-1.66)-- (2.68,-1.58)-- (2.7,-1.52)-- (2.72,-1.44)-- (2.74,-1.38)-- (2.78,-1.3)-- (2.8,-1.24)-- (2.82,-1.18)-- (2.84,-1.12)-- (2.88,-1.06)-- (2.9,-1)-- (2.92,-0.94)-- (2.96,-0.88)-- (2.98,-0.82)-- (3.02,-0.76)-- (3.04,-0.7)-- (3.08,-0.64)-- (3.1,-0.58)-- (3.12,-0.52)-- (3.16,-0.46)-- (3.18,-0.4)-- (3.22,-0.34)-- (3.24,-0.28)-- (3.28,-0.22)-- (3.3,-0.16)-- (3.36,-0.1)-- (3.38,-0.04)-- (3.42,0.02)-- (3.46,0.08)-- (3.52,0.12)-- (3.56,0.18)-- (3.62,0.24)-- (3.68,0.28)-- (3.74,0.32)-- (3.8,0.36)-- (3.86,0.38)-- (3.92,0.42)-- (3.98,0.44)-- (4.04,0.48)-- (4.1,0.5)-- (4.16,0.52)-- (4.22,0.54)-- (4.28,0.56)-- (4.34,0.58)-- (4.4,0.6)-- (4.46,0.62)-- (4.52,0.64);\draw [line width=1pt] (-2.12,-3.54)--  (-2.12,-3.54)-- (-2.1,-3.46)-- (-2.04,-3.36)-- (-1.96,-3.24)-- (-1.92,-3.16)-- (-1.88,-3.1)-- (-1.84,-3.04)-- (-1.78,-2.94)-- (-1.72,-2.88)-- (-1.66,-2.82)-- (-1.62,-2.76)-- (-1.56,-2.7)-- (-1.5,-2.64)-- (-1.44,-2.58)-- (-1.38,-2.54)-- (-1.34,-2.48)-- (-1.28,-2.44)-- (-1.24,-2.38)-- (-1.18,-2.32)-- (-1.12,-2.26)-- (-1.06,-2.22)-- (-0.98,-2.14)-- (-0.92,-2.1)-- (-0.84,-2.06)-- (-0.76,-2.02)-- (-0.7,-2)-- (-0.64,-1.96)-- (-0.56,-1.92)-- (-0.5,-1.88)-- (-0.44,-1.86)-- (-0.38,-1.82)-- (-0.32,-1.8)-- (-0.26,-1.76)-- (-0.2,-1.74)-- (-0.14,-1.72)-- (-0.08,-1.7)-- (-0.02,-1.68)-- (0.06,-1.68)-- (0.14,-1.68)-- (0.22,-1.68)-- (0.3,-1.68)-- (0.38,-1.68)-- (0.46,-1.68)-- (0.54,-1.68)-- (0.6,-1.7)-- (0.66,-1.72)-- (0.72,-1.74)-- (0.78,-1.76)-- (0.84,-1.8)-- (0.9,-1.82)-- (0.96,-1.86)-- (1.02,-1.9)-- (1.08,-1.94)-- (1.14,-1.96)-- (1.2,-1.98)-- (1.24,-2.04)-- (1.3,-2.06)-- (1.38,-2.08)-- (1.44,-2.1)-- (1.5,-2.12)-- (1.56,-2.14)-- (1.62,-2.16)-- (1.68,-2.18)-- (1.74,-2.2)-- (1.8,-2.22)-- (1.86,-2.24)-- (1.88,-2.3)-- (1.9,-2.36);\draw [line width=1pt,dotted] (-4.690013224666891,2.134023137647478)--  (-4.690013224666891,2.134023137647478)-- (-4.68561875745547,2.120839736013216)-- (-4.68561875745547,2.103261867167533)-- (-4.6812242902440495,2.090078465533271)-- (-4.6812242902440495,2.0725005966875885)-- (-4.676829823032628,2.0593171950533264)-- (-4.676829823032628,2.0417393262076438)-- (-4.672435355821208,2.0285559245733817)-- (-4.672435355821208,2.0065835885162784)-- (-4.672435355821208,1.9890057196705957)-- (-4.672435355821208,1.9714278508249132)-- (-4.672435355821208,1.9538499819792305)-- (-4.672435355821208,1.9362721131335479)-- (-4.672435355821208,1.9186942442878652)-- (-4.676829823032628,1.9011163754421825)-- (-4.676829823032628,1.8835385065964998)-- (-4.6812242902440495,1.8703551049622378)-- (-4.68561875745547,1.8571717033279758)-- (-4.690013224666891,1.8439883016937137)-- (-4.694407691878311,1.830804900059452)-- (-4.698802159089732,1.81762149842519)-- (-4.703196626301152,1.804438096790928)-- (-4.711985560723994,1.791254695156666)-- (-4.725168962358256,1.778071293522404)-- (-4.738352363992518,1.7692823590995626)-- (-4.747141298415359,1.7560989574653005)-- (-4.760324700049622,1.7517044902538799)-- (-4.773508101683883,1.7385210886196178)-- (-4.786691503318146,1.7297321541967765)-- (-4.799874904952407,1.7165487525625145)-- (-4.81305830658667,1.7077598181396731)-- (-4.826241708220931,1.6989708837168318)-- (-4.839425109855194,1.6901819492939907)-- (-4.852608511489455,1.6769985476597287)-- (-4.865791913123718,1.672604080448308)-- (-4.878975314757979,1.6638151460254667)-- (-4.892158716392242,1.659420678814046)-- (-4.905342118026503,1.6550262116026253)-- (-4.918525519660766,1.6506317443912046)-- (-4.931708921295027,1.646237277179784)-- (-4.94928679014071,1.646237277179784)-- (-4.966864658986393,1.646237277179784)-- (-4.980048060620655,1.6418428099683633)-- (-4.997625929466338,1.6418428099683633)-- (-5.015203798312021,1.6418428099683633)-- (-5.032781667157703,1.6418428099683633)-- (-5.050359536003386,1.6418428099683633);\draw [line width=1pt, dotted] (-4.94928679014071,1.6506317443912046)--  (-4.94928679014071,1.6506317443912046)-- (-4.94928679014071,1.672604080448308)-- (-4.94928679014071,1.6901819492939907)-- (-4.94928679014071,1.7077598181396731)-- (-4.94928679014071,1.7297321541967765)-- (-4.94489232292929,1.7473100230424592)-- (-4.940497855717869,1.7604934246767212)-- (-4.9361033885064485,1.7736768263109832)-- (-4.931708921295027,1.791254695156666)-- (-4.927314454083607,1.804438096790928)-- (-4.922919986872186,1.8220159656366106)-- (-4.918525519660766,1.8351993672708724)-- (-4.914131052449345,1.8483827689051344)-- (-4.9097365852379244,1.8615661705393964)-- (-4.900947650815083,1.8747495721736585)-- (-4.892158716392242,1.8879329738079205)-- (-4.8833697819694,1.9011163754421825)-- (-4.874580847546559,1.9142997770764445)-- (-4.865791913123718,1.9274831787107065)-- (-4.852608511489455,1.9362721131335479)-- (-4.839425109855194,1.9450610475563892)-- (-4.826241708220931,1.9538499819792305)-- (-4.81305830658667,1.9626389164020719)-- (-4.799874904952407,1.9714278508249132)-- (-4.786691503318146,1.9758223180363337)-- (-4.773508101683883,1.9802167852477544)-- (-4.760324700049622,1.9890057196705957)-- (-4.747141298415359,1.997794654093437)-- (-4.733957896781098,2.0021891213048577)-- (-4.716380027935415,2.0065835885162784)-- (-4.698802159089732,2.0065835885162784)-- (-4.68561875745547,2.010978055727699)-- (-4.668040888609787,2.010978055727699)-- (-4.650463019764104,2.010978055727699)-- (-4.663646421398367,2.0065835885162784);\draw [line width=1pt] (-2.7801471275980205,-2.898564405095965)--  (-2.7801471275980205,-2.898564405095965)-- (-2.774353840003287,-2.915944267880166)-- (-2.768560552408553,-2.9333241306643676)-- (-2.7627672648138195,-2.950703993448569)-- (-2.7511806896243516,-2.96808385623277)-- (-2.739594114434884,-2.9854637190169715)-- (-2.7338008268401506,-3.002843581801173)-- (-2.722214251650683,-3.020223444585374)-- (-2.7048343888664816,-3.0376033073695754)-- (-2.6874545260822806,-3.054983170153777)-- (-2.670074663298079,-3.0665697453432443)-- (-2.646901512919144,-3.078156320532712)-- (-2.6295216501349428,-3.0897428957221793)-- (-2.6121417873507418,-3.0955361833169133)-- (-2.5947619245665403,-3.101329470911647)-- (-2.5773820617823393,-3.1129160461011143)-- (-2.560002198998138,-3.1187093336958482)-- (-2.536829048619203,-3.1245026212905818)-- (-2.5194491858350014,-3.1302959088853157)-- (-2.5020693230508004,-3.1360891964800492)-- (-2.4788961726718655,-3.1360891964800492)-- (-2.4557230222929305,-3.1360891964800492)-- (-2.438343159508729,-3.141882484074783)-- (-2.415170009129794,-3.141882484074783)-- (-2.391996858750859,-3.141882484074783)-- (-2.368823708371924,-3.141882484074783);\draw [line width=1pt] (2.2347381775266486,-1.5903666558252543)--  (2.2347381775266486,-1.5903666558252543)-- (2.218938302268284,-1.5745667805668895)-- (2.2084050520960408,-1.5587669053085247)-- (2.203138427009919,-1.54296703005016)-- (2.1978718019237977,-1.5271671547917949)-- (2.192605176837676,-1.51136727953343)-- (2.182071926665433,-1.4903007791889438)-- (2.176805301579311,-1.474500903930579)-- (2.1715386764931894,-1.4587010286722142)-- (2.166272051407068,-1.4376345283277276)-- (2.166272051407068,-1.4165680279832413)-- (2.1610054263209464,-1.4007681527248765)-- (2.1610054263209464,-1.37970165238039)-- (2.1610054263209464,-1.3586351520359037)-- (2.1610054263209464,-1.3375686516914171)-- (2.1610054263209464,-1.3165021513469308)-- (2.1610054263209464,-1.2954356510024443)-- (2.1610054263209464,-1.274369150657958)-- (2.166272051407068,-1.2585692753995932)-- (2.176805301579311,-1.2427694001412284)-- (2.1873385517515542,-1.2269695248828636)-- (2.203138427009919,-1.2111696496244986)-- (2.218938302268284,-1.1953697743661338)-- (2.2347381775266486,-1.179569899107769)-- (2.2505380527850134,-1.1637700238494042)-- (2.266337928043378,-1.1585033987632827)-- (2.2768711782156217,-1.1427035235049179)-- (2.2926710534739865,-1.1321702733326746)-- (2.3084709287323513,-1.126903648246553)-- (2.324270803990716,-1.1216370231604313)-- (2.340070679249081,-1.1163703980743098)-- (2.355870554507445,-1.111103772988188)--  (2.282137803301743,-1.1427035235049179)--  (2.282137803301743,-1.1427035235049179)-- (2.2979376785601078,-1.1585033987632827)-- (2.3084709287323513,-1.1743032740216475)-- (2.3137375538184726,-1.1901031492800123)-- (2.324270803990716,-1.205903024538377)-- (2.334804054162959,-1.2217028997967418)-- (2.340070679249081,-1.2375027750551066)-- (2.345337304335202,-1.2533026503134714)-- (2.350603929421324,-1.2691025255718362)-- (2.355870554507445,-1.2849024008302012)-- (2.361137179593567,-1.300702276088566)-- (2.361137179593567,-1.3217687764330524)-- (2.361137179593567,-1.3428352767775389)-- (2.361137179593567,-1.3639017771220252)-- (2.355870554507445,-1.37970165238039)-- (2.350603929421324,-1.3955015276387548)-- (2.340070679249081,-1.4113014028971196)-- (2.3295374290768374,-1.4271012781554844)-- (2.3190041789045943,-1.4429011534138494)-- (2.3032043036462295,-1.4534344035860924)-- (2.2874044283878647,-1.4692342788444572)-- (2.2716045531295,-1.4797675290167005)-- (2.255804677871135,-1.4903007791889438)-- (2.2400048026127704,-1.500834029361187)-- (2.2242049273544056,-1.51136727953343)-- (2.2084050520960408,-1.5219005297056734)-- (2.192605176837676,-1.5324337798779166);\draw [line width=1pt] (2.440136555885391,-0.7161068915290678)--  (2.440136555885391,-0.7161068915290678)-- (2.440136555885391,-0.6897737660984598)-- (2.434869930799269,-0.673973890840095)-- (2.429603305713148,-0.6529073904956085)-- (2.424336680627026,-0.6318408901511221)-- (2.413803430454783,-0.6107743898066357)-- (2.4085368053686613,-0.5897078894621492)-- (2.4085368053686613,-0.5686413891176628)-- (2.4032701802825396,-0.552841513859298)-- (2.4032701802825396,-0.5317750135148116)-- (2.4032701802825396,-0.5107085131703252)-- (2.4032701802825396,-0.4896420128258388)-- (2.4032701802825396,-0.4685755124813524)-- (2.4085368053686613,-0.4527756372229875)-- (2.413803430454783,-0.43697576196462273)-- (2.424336680627026,-0.4211758867062579)-- (2.429603305713148,-0.4053760114478931)-- (2.440136555885391,-0.3895761361895283)-- (2.4559364311437557,-0.37377626093116345)-- (2.4664696813159988,-0.35797638567279866)-- (2.4822695565743635,-0.34744313550055544)-- (2.492802806746607,-0.3316432602421906)-- (2.508602682004972,-0.32111001006994744)-- (2.519135932177215,-0.3053101348115826)-- (2.5349358074355797,-0.2947768846393394)-- (2.5454690576078227,-0.2789770093809746)-- (2.5612689328661875,-0.27371038429485295)-- (2.5770688081245523,-0.25791050903648816)-- (2.592868683382917,-0.2526438839503665)-- (2.608668558641282,-0.24211063377812334)-- (2.6244684338996467,-0.23684400869200173)-- (2.6402683091580115,-0.23157738360588012)-- (2.661334809502498,-0.22631075851975851)-- (2.677134684760863,-0.2210441334336369)-- (2.6929345600192276,-0.2157775083475153)-- (2.7140010603637137,-0.2157775083475153)-- (2.7350675607082,-0.2157775083475153)-- (2.7561340610526863,-0.2210441334336369)-- (2.771933936311051,-0.22631075851975851);\draw [line width=1pt] (2.608668558641282,-0.24737725886424494)--  (2.608668558641282,-0.24737725886424494)-- (2.608668558641282,-0.26844375920873137)-- (2.608668558641282,-0.2895102595532178)-- (2.608668558641282,-0.3105767598977042)-- (2.6139351837274036,-0.326376635156069)-- (2.6139351837274036,-0.34744313550055544)-- (2.6139351837274036,-0.37377626093116345)-- (2.6139351837274036,-0.3948427612756499)-- (2.608668558641282,-0.4159092616201363)-- (2.60340193355516,-0.4317091368785011)-- (2.598135308469039,-0.44750901213686595)-- (2.5876020582967953,-0.46330888739523074)-- (2.5718021830384306,-0.4791087626535956)-- (2.556002307780066,-0.4949086379119604)-- (2.5454690576078227,-0.5107085131703252)-- (2.529669182349458,-0.5159751382564468)-- (2.513869307091093,-0.5317750135148116)-- (2.4980694318327283,-0.5423082636870549)-- (2.4822695565743635,-0.552841513859298)-- (2.4664696813159988,-0.5581081389454197)-- (2.450669806057634,-0.5686413891176628)-- (2.429603305713148,-0.5686413891176628)-- (2.413803430454783,-0.5739080142037845)-- (2.3927369301102965,-0.5739080142037845);\draw [line width=1pt,dash pattern=on 1pt off 1pt,color=qqqqqq] (3.1724607741105935,1.22754109650411)--  (3.1724607741105935,1.22754109650411)-- (3.1724607741105935,1.204367946125175)-- (3.1724607741105935,1.1811947957462399)-- (3.1724607741105935,1.158021645367305)-- (3.1724607741105935,1.1348484949883697)-- (3.16666748651586,1.1174686322041685)-- (3.16666748651586,1.0942954818252333)-- (3.160874198921126,1.076915619041032)-- (3.1550809113263925,1.0595357562568306)-- (3.1550809113263925,1.0363626058778956)-- (3.1492876237316585,1.0189827430936942)-- (3.143494336136925,0.9958095927147592)-- (3.143494336136925,0.972636442335824)-- (3.137701048542191,0.9552565795516228)-- (3.137701048542191,0.9320834291726876)-- (3.1319077609474575,0.9147035663884864)-- (3.1319077609474575,0.8915304160095512)-- (3.1319077609474575,0.8683572656306161)-- (3.1261144733527235,0.8509774028464148)-- (3.12032118575799,0.8335975400622135)-- (3.114527898163256,0.8162176772780122)-- (3.1087346105685225,0.7988378144938109)-- (3.1029413229737886,0.7814579517096095)-- (3.097148035379055,0.7640780889254082)-- (3.091354747784321,0.7466982261412068)-- (3.0855614601895875,0.7293183633570056)-- (3.0797681725948536,0.7119385005728043)-- (3.07397488500012,0.6887653501938692)-- (3.068181597405386,0.665592199814934)-- (3.0623883098106526,0.6482123370307328);\draw [line width=1pt,dash pattern=on 1pt off 1pt,color=qqqqqq] (3.068181597405386,0.7003519253833367)--  (3.068181597405386,0.7003519253833367)-- (3.068181597405386,0.6713854874096679)-- (3.07397488500012,0.6540056246254665)-- (3.0797681725948536,0.6366257618412652)-- (3.0855614601895875,0.6192458990570638)-- (3.091354747784321,0.5960727486781288)-- (3.097148035379055,0.5786928858939274)-- (3.1029413229737886,0.5613130231097261)-- (3.1029413229737886,0.538139872730791)-- (3.1087346105685225,0.5207600099465897)-- (3.114527898163256,0.5033801471623884)-- (3.12032118575799,0.4860002843781871)-- (3.1261144733527235,0.4512405588097844)-- (3.1319077609474575,0.4338606960255831)-- (3.137701048542191,0.4164808332413818)-- (3.143494336136925,0.39910097045718046)-- (3.1492876237316585,0.3817211076729791)-- (3.1550809113263925,0.36434124488877784)-- (3.1550809113263925,0.3411680945098427)-- (3.1550809113263925,0.3179949441309076);\draw [line width=1pt,dash pattern=on 1pt off 1pt,color=qqqqqq] (3.16666748651586,0.28902850615723874)--  (3.16666748651586,0.28902850615723874)-- (3.189840636894795,0.28902850615723874)-- (3.21301378727373,0.28902850615723874)-- (3.2477735128421323,0.28902850615723874)-- (3.2709466632210673,0.283235218562505)-- (3.299913101194736,0.283235218562505)-- (3.3172929639789372,0.2774419309677712)-- (3.3346728267631383,0.26585535577830366)-- (3.3520526895473397,0.2600620681835699)-- (3.3694325523315407,0.2542687805888361)-- (3.386812415115742,0.24268220539936858)-- (3.404192277899943,0.22530234261516724)-- (3.4215721406841446,0.2137157674256997)-- (3.4389520034683456,0.1963359046414984)-- (3.456331866252547,0.19054261704676462)-- (3.473711729036748,0.17316275426256328)-- (3.4910915918209495,0.16157617907309574)-- (3.5084714546051505,0.14419631628889443)-- (3.520058029794618,0.1268164535046931)-- (3.5374378925788195,0.10943659072049178)-- (3.543231180173553,0.09205672793629045)-- (3.5606110429577544,0.08626344034155668);\draw [line width=1pt,dotted,color=qqqqqq] (3.160874198921126,1.2507142468830452)--  (3.160874198921126,1.2507142468830452)-- (3.160874198921126,1.22754109650411)-- (3.16666748651586,1.2101612337199088)-- (3.1782540617053274,1.1927813709357076)-- (3.184047349300061,1.1754015081515061)-- (3.1956339244895284,1.1522283577725712)-- (3.2014272120842624,1.1348484949883697)-- (3.207220499678996,1.1116753446094347)-- (3.21301378727373,1.0942954818252333)-- (3.2188070748684634,1.0711223314462983)-- (3.230393650057931,1.0537424686620969)-- (3.236186937652665,1.0363626058778956)-- (3.2419802252473984,1.0131894554989604)-- (3.253566800436866,0.9958095927147592)-- (3.2593600880316,0.9784297299305579)-- (3.2651533756263333,0.9552565795516228)-- (3.276739950815801,0.9320834291726876)-- (3.282533238410535,0.9089102787937525)-- (3.2883265260052683,0.8915304160095512)-- (3.2941198136000023,0.87415055322535)-- (3.3057063887894698,0.8567706904411486)-- (3.3114996763842033,0.8393908276569473)-- (3.3172929639789372,0.8220109648727459)-- (3.3230862515736708,0.8046311020885446)-- (3.3346728267631383,0.7872512393043433)-- (3.3462594019526057,0.769871376520142)-- (3.3520526895473397,0.7524915137359407)-- (3.3578459771420732,0.7351116509517394)-- (3.363639264736807,0.717731788167538)-- (3.3694325523315407,0.7003519253833367)-- (3.381019127521008,0.6829720625991353)-- (3.386812415115742,0.665592199814934)-- (3.3926057027104757,0.6424190494359989)-- (3.404192277899943,0.6192458990570638)-- (3.409985565494677,0.6018660362728625)-- (3.4157788530894106,0.5844861734886613)-- (3.4215721406841446,0.5555197355149923)-- (3.427365428278878,0.538139872730791)-- (3.427365428278878,0.5149667223518559)-- (3.4389520034683456,0.49758685956765464)-- (3.4447452910630796,0.4744137091887195)-- (3.450538578657813,0.4512405588097844)-- (3.456331866252547,0.4338606960255831)-- (3.4621251538472806,0.4164808332413818)-- (3.4621251538472806,0.3933076828624467)-- (3.4679184414420146,0.3701345324835116)-- (3.473711729036748,0.3411680945098427)-- (3.473711729036748,0.3122016565361739)-- (3.479505016631482,0.283235218562505)-- (3.4852983042262156,0.26585535577830366)-- (3.4910915918209495,0.24268220539936858)-- (3.496884879415683,0.22530234261516724)-- (3.502678167010417,0.20792247983096593)-- (3.5084714546051505,0.19054261704676462)-- (3.520058029794618,0.17316275426256328)-- (3.5316446049840855,0.15578289147836197)-- (3.543231180173553,0.13840302869416066)-- (3.5548177553630205,0.12102316590995932);\draw [line width=1pt,color=qqqqqq] (0.2357545232761704,2.173815026752726)--  (0.2357545232761704,2.173815026752726)-- (0.26120038037851945,2.1822969791201756)-- (0.2951281898483182,2.199260883855075)-- (0.3205740469506672,2.2162247885899746)-- (0.34601990405301625,2.2247067409574246)-- (0.3714657611553653,2.241670645692324)-- (0.4138755229926137,2.2501525980597736)-- (0.4478033324624124,2.2586345504272236)-- (0.4902130942996608,2.2586345504272236)-- (0.5326228561369092,2.2586345504272236)-- (0.5750326179741576,2.2586345504272236)-- (0.6089604274439563,2.2586345504272236)-- (0.6428882369137551,2.2586345504272236)-- (0.6683340940161041,2.2501525980597736)-- (0.6937799511184531,2.241670645692324)-- (0.7192258082208022,2.2247067409574246)-- (0.7446716653231512,2.207742836222525)-- (0.7616355700580505,2.1822969791201756)-- (0.7785994747929499,2.1568511220178266)-- (0.7870814271603995,2.1314052649154775)-- (0.7955633795278493,2.105959407813128)-- (0.8040453318952989,2.080513550710779)-- (0.8125272842627486,2.05506769360843)-- (0.829491188997648,2.0296218365060805)-- (0.8379731413650976,2.0041759794037315)-- (0.8464550937325473,1.9787301223013822)-- (0.854937046099997,1.953284265199033)-- (0.854937046099997,1.919356455729234)-- (0.854937046099997,1.8854286462594352)-- (0.8634189984674467,1.859982789157086)-- (0.8634189984674467,1.826054979687287)-- (0.8634189984674467,1.792127170217488)-- (0.8634189984674467,1.7581993607476891)-- (0.8634189984674467,1.7242715512778901);\draw [line width=1pt,color=qqqqqq] (0.3290559993181169,2.233188693324874)--  (0.3290559993181169,2.233188693324874)-- (0.3290559993181169,2.199260883855075)-- (0.3290559993181169,2.1653330743852766)-- (0.3290559993181169,2.1314052649154775)-- (0.34601990405301625,2.105959407813128)-- (0.3629838087879156,2.072031598343329)-- (0.379947713522815,2.04658574124098)-- (0.405393570625164,2.021139884138631)-- (0.42235747536006335,1.9956940270362817)-- (0.43932138009496274,1.9702481699339325)-- (0.4478033324624124,1.9448023128315832)-- (0.4732491895647614,1.9278384080966837)-- (0.49869504666711045,1.9023925509943345)-- (0.5326228561369092,1.893910598626885)-- (0.5580687132392582,1.8769466938919854)-- (0.5835145703416073,1.8684647415245357)-- (0.6089604274439563,1.859982789157086)-- (0.6428882369137551,1.859982789157086)-- (0.6768160463835537,1.859982789157086)-- (0.7107438558533524,1.859982789157086)-- (0.7446716653231512,1.859982789157086)-- (0.7701175224255002,1.8684647415245357)-- (0.7955633795278493,1.8769466938919854)-- (0.8210092366301983,1.8854286462594352)-- (0.8464550937325473,1.893910598626885)-- (0.8719009508348964,1.9023925509943345);\draw [line width=1pt,color=qqqqqq] (1.4317098070865752,1.5291866468265463)--  (1.4317098070865752,1.5291866468265463)-- (1.4571556641889243,1.537668599193996)-- (1.4826015212912733,1.5546325039288955)-- (1.5080473783936224,1.5631144562963453)-- (1.5334932354959714,1.571596408663795)-- (1.5589390925983204,1.5800783610312445)-- (1.5928669020681192,1.5800783610312445)-- (1.6183127591704682,1.5885603133986943)-- (1.6522405686402668,1.5885603133986943)-- (1.6861683781100656,1.5800783610312445)-- (1.7116142352124146,1.571596408663795)-- (1.7370600923147637,1.5631144562963453)-- (1.7625059494171127,1.5546325039288955)-- (1.7879518065194617,1.537668599193996)-- (1.8133976636218108,1.5291866468265463)-- (1.8388435207241598,1.503740789724197)-- (1.8642893778265088,1.4867768849892975)-- (1.8897352349288579,1.469812980254398)-- (1.9066991396637571,1.444367123152049)-- (1.9236630443986567,1.4189212660496997)-- (1.940626949133556,1.3934754089473504)-- (1.9575908538684552,1.3680295518450012)-- (1.966072806235905,1.3425836947426522)-- (1.9745547586033547,1.317137837640303)-- (1.9830367109708043,1.2916919805379536)-- (1.991518663338254,1.2662461234356044)-- (1.991518663338254,1.2323183139658056)-- (2.0000006157057038,1.2068724568634563)-- (2.0000006157057038,1.1729446473936573);\draw [line width=1pt,color=qqqqqq] (1.5674210449657702,1.5800783610312445)--  (1.5674210449657702,1.5800783610312445)-- (1.5674210449657702,1.5461505515614458)-- (1.5759029973332197,1.5207046944590965)-- (1.5843849497006695,1.4952588373567473)-- (1.5928669020681192,1.469812980254398)-- (1.6013488544355687,1.444367123152049)-- (1.6098308068030185,1.4189212660496997)-- (1.6352766639053675,1.3934754089473504)-- (1.6522405686402668,1.3680295518450012)-- (1.6776864257426158,1.3595475994775514)-- (1.7031322828449649,1.3425836947426522)-- (1.728578139947314,1.3256197900077527)-- (1.754023997049663,1.317137837640303)-- (1.779469854152012,1.3086558852728531)-- (1.804915711254361,1.3001739329054034)-- (1.83036156835671,1.2916919805379536)-- (1.855807425459059,1.283210028170504)-- (1.8897352349288579,1.283210028170504)-- (1.9236630443986567,1.283210028170504)-- (1.9575908538684552,1.283210028170504)-- (1.991518663338254,1.283210028170504)-- (2.0084825680731533,1.3086558852728531);\draw [line width=1pt,color=qqqqqq] (-2.9073951095685193,-2.8909205963069042)--  (-2.9073951095685193,-2.8909205963069042)-- (-2.875532027797484,-2.8909205963069042)-- (-2.8500415623806554,-2.8909205963069042)-- (-2.824551096963827,-2.8909205963069042)-- (-2.7990606315469986,-2.8909205963069042)-- (-2.77357016613017,-2.8909205963069042)-- (-2.754452317067549,-2.87817536359849)-- (-2.735334468004927,-2.8718027472442826)-- (-2.7034713862338915,-2.8718027472442826)-- (-2.6843535371712703,-2.8654301308900756)-- (-2.6588630717544417,-2.8654301308900756)-- (-2.633372606337613,-2.8654301308900756)-- (-2.607882140920785,-2.8654301308900756)-- (-2.582391675503956,-2.8654301308900756)-- (-2.5569012100871276,-2.8654301308900756)-- (-2.5377833610245064,-2.8718027472442826)-- (-2.5186655119618853,-2.87817536359849)-- (-2.4995476628992637,-2.884547979952697)-- (-2.4804298138366425,-2.8909205963069042)-- (-2.461311964774021,-2.8972932126611113)-- (-2.4421941157113998,-2.910038445369526)-- (-2.423076266648778,-2.92278367807794)-- (-2.410331033940364,-2.9419015271405615)-- (-2.3975858012319495,-2.961019376203183)-- (-2.3912131848777425,-2.9801372252658043)-- (-2.3784679521693284,-2.999255074328426)-- (-2.3720953358151213,-3.0183729233910475)-- (-2.3657227194609143,-3.0374907724536686)-- (-2.359350103106707,-3.05660862151629)-- (-2.3466048703982927,-3.075726470578912)-- (-2.3402322540440856,-3.094844319641533)-- (-2.3338596376898786,-3.1139621687041545)-- (-2.3338596376898786,-3.1458252504751902)-- (-2.3274870213356715,-3.164943099537812)-- (-2.3274870213356715,-3.1904335649546405)-- (-2.3338596376898786,-3.215924030371469)-- (-2.3338596376898786,-3.2414144957882978);\draw [line width=1pt,color=qqqqqq] (-3.4618127323845393,-2.342875589845089)--  (-3.4618127323845393,-2.342875589845089)-- (-3.4363222669677107,-2.342875589845089)-- (-3.410831801550882,-2.342875589845089)-- (-3.385341336134054,-2.342875589845089)-- (-3.359850870717225,-2.342875589845089)-- (-3.3343604053003966,-2.342875589845089)-- (-3.3088699398835684,-2.342875589845089)-- (-3.2833794744667397,-2.342875589845089)-- (-3.257889009049911,-2.349248206199296)-- (-3.232398543633083,-2.3556208225535036)-- (-3.2132806945704613,-2.3619934389077106)-- (-3.19416284550784,-2.3747386716161247)-- (-3.1750449964452185,-2.381111287970332)-- (-3.1559271473825974,-2.3938565206787463)-- (-3.136809298319976,-2.4002291370329534)-- (-3.1240640656115617,-2.419346986095575)-- (-3.1113188329031476,-2.4384648351581966)-- (-3.10494621654894,-2.4575826842208177)-- (-3.092200983840526,-2.4767005332834393)-- (-3.079455751132112,-2.495818382346061)-- (-3.073083134777905,-2.514936231408682)-- (-3.0667105184236974,-2.5340540804713036)-- (-3.0603379020694903,-2.5531719295339252)-- (-3.0603379020694903,-2.578662394950754)-- (-3.0603379020694903,-2.6041528603675825)-- (-3.0603379020694903,-2.629643325784411)-- (-3.0603379020694903,-2.6551337912012394)-- (-3.0603379020694903,-2.680624256618068)-- (-3.0603379020694903,-2.7061147220348967)-- (-3.0667105184236974,-2.7252325710975183);\draw [line width=1pt,color=qqqqqq] (-3.3725961034256393,-2.3556208225535036)--  (-3.3725961034256393,-2.3556208225535036)-- (-3.3725961034256393,-2.381111287970332)-- (-3.3725961034256393,-2.412974369741368)-- (-3.3662234870714323,-2.432092218803989)-- (-3.359850870717225,-2.4512100678666107)-- (-3.347105638008811,-2.4703279169292323)-- (-3.3343604053003966,-2.4894457659918534)-- (-3.3216151725919825,-2.508563615054475)-- (-3.3088699398835684,-2.5276814641170966)-- (-3.296124707175154,-2.546799313179718)-- (-3.2770068581125327,-2.5531719295339252)-- (-3.257889009049911,-2.5659171622423393)-- (-3.23877115998729,-2.578662394950754)-- (-3.2196533109246683,-2.585035011304961)-- (-3.200535461862047,-2.591407627659168)-- (-3.1814176127994256,-2.597780244013375)-- (-3.1622997637368044,-2.6041528603675825)-- (-3.143181914674183,-2.6105254767217896)-- (-3.1240640656115617,-2.6168980930759966)-- (-3.098573600194733,-2.6168980930759966)-- (-3.073083134777905,-2.6232707094302037)-- (-3.047592669361076,-2.6232707094302037)-- (-3.0284748202984546,-2.629643325784411);\begin{scriptsize}\draw [fill=black] (-5.195370234049666,1.375370234049646) circle (0.5pt);\draw [fill=black] (-4.518674516012442,2.4062248386708074) circle (0.5pt);\draw [fill=black] (-5,2.24) circle (0.5pt);\draw [fill=black] (-5.16,2.02) circle (0.5pt);\draw [fill=black] (-4.3923877529478395,1.9288317365306689) circle (0.5pt);\end{scriptsize}
\draw (-1,0) node[above] {$Convex\ core$};
\draw (3.5,2) node[above] {$Rank\ one\ cusp$};
\draw [->] (3.5,2) -- (3.5,1.25);
\draw (0,-2.5) node[below] {$Boundary$};
\draw [->] (1.25,-2.75) -- (2,-2.5);
\draw [->] (-1,-3) -- (-2,-3.5);
\draw (-5,0) node[above] {$Maximal$};
\draw (-5,-0.5) node[above] {$rank\ cusp$};
\draw [->] (-5,0.5) -- (-5,1);
\end{tikzpicture}
	\end{center}
	\caption{Convex core}
\end{figure}
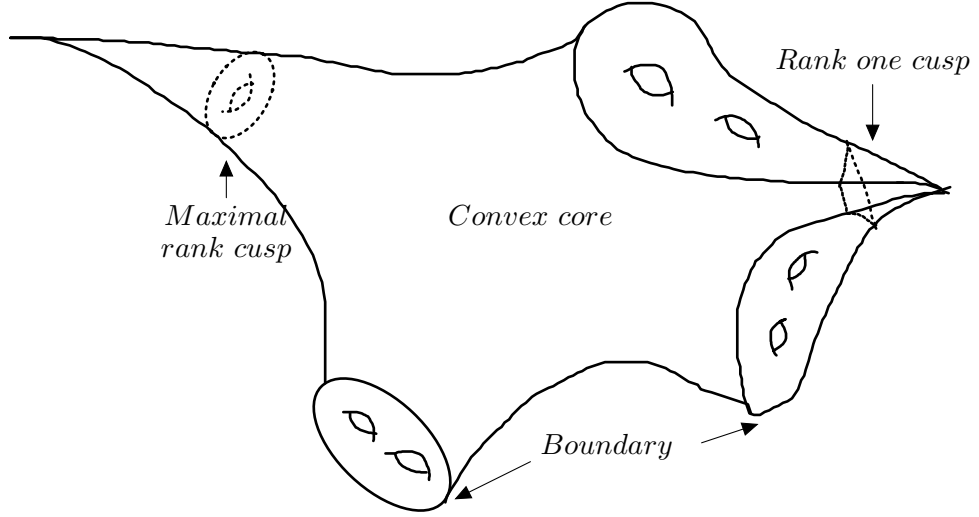

Our covering has the advantage that $M_3$ is compact, and $M_1,M_2$ have quasi-warped product Riemannian structure.
\begin{prop}\label{prop:convexcorecompact}
	The set $M_3$ is relatively compact in $M$.
\end{prop}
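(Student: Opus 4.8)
The plan is to sandwich $M_3=\mathring{C_{2}(M)}-\cusp'$ between a set that is closed in $M_c$ and avoids the ideal boundary $M_I=M_c-M$, and a compact subset of $M_c$; the intersection will be compact and contained in $M$, which is exactly relative compactness of $M_3$ in $M$. Taking closures in $M_c$ and using $\overline{(\cusp')^c}=(\mathring{\cusp'})^c$, I would first record
\[
\overline{M_3}\subset\overline{\mathring{C_{2}(M)}}\cap\overline{(\cusp')^c}\subset\overline{C_{2}(M)}\cap\bigl(M_c-\mathring{\cusp'}\bigr).
\]
It then suffices to prove two facts: (i) $\overline{C_{2}(M)}$ is disjoint from $M_I$, so that it is a closed subset of $M$; and (ii) $M_c-\mathring{\cusp'}$ is compact. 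Granting these, $\overline{M_3}$ is a closed subset of the compact set $M_c-\mathring{\cusp'}$, hence compact, and it lies in $\overline{C_{2}(M)}\subset M$, proving the proposition.

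For (i), the key point is that the preimage $W_2$ of $C_{2}(M)$ is $\Gamma$-invariant and, being a bounded ($r=2$) neighbourhood of $\mathrm{hull}(\Lambda(\Gamma))\cap X$, has ideal boundary $\overline{W_2}\cap X_I=\Lambda(\Gamma)$. Suppose some $\bar p\in M_I$ were a limit of points $\bar p_n\in C_{2}(M)$, and write $\bar p=\pi(\xi)$ with $\xi\in\Omega$, where $\pi\colon X\cup\Omega\to M_c$ is the quotient map. Since $\Gamma$ acts properly discontinuously on $X\cup\Omega$ and, after the reduction of Section~\ref{sec:geofin}, without torsion, I can choose a neighbourhood $V$ of $\xi$ in $X\cup\Omega$ on which $\pi$ is injective; lifting $\bar p_n$ through $V$ yields points $x_n'\to\xi$ with $x_n'=\gamma_n x_n$ and $x_n\in W_2$, and $\Gamma$-invariance gives $x_n'\in\gamma_n W_2=W_2$. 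Hence $\xi\in\overline{W_2}\cap X_I=\Lambda(\Gamma)$, contradicting $\xi\in\Omega=X_I-\Lambda(\Gamma)$. This is the main obstacle of the proof: a naive estimate only gives each $\xi\in\Omega$ an $X_c$-neighbourhood missing $W_2$, and one genuinely needs proper discontinuity to transport the separation from $X\cup\Omega$ to the quotient $M_c$.

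For (ii), I would combine geometric finiteness with the nesting $\cusp'\subset\mathring{\cusp}$. Writing
\[
M_c-\mathring{\cusp'}=\bigl(M_c-\mathring{\cusp}\bigr)\cup\bigl(\mathring{\cusp}-\mathring{\cusp'}\bigr),
\]
the first piece is compact by Definition~\ref{def:geofinite}. For the collar, I transport to the model via the isometry $f\colon\cusp\to E\subset M_c(\Pi)$ of Definition~\ref{defi:cusp region}: as in the proof of Proposition~\ref{prop:cuspregion} one has $E=\overline{K^c}$ and $E_1=\overline{K_1^c}$ with $K\subset\mathring{K_1}$ compact, so $\mathring{E_1}\supset K_1^c$ and therefore $E-\mathring{E_1}\subset K_1$ is relatively compact, uniformly in the rank of the cusp (so the argument covers the non-maximal rank case too). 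Thus $\cusp-\mathring{\cusp'}=f^{-1}(E-\mathring{E_1})$ is a closed subset of the compact set $f^{-1}(E\cap K_1)$, hence compact, and the smaller $\mathring{\cusp}-\mathring{\cusp'}$ is contained in it. Consequently $M_c-\mathring{\cusp'}$ is closed and contained in a compact set, so it is compact. Assembling (i) and (ii) with the sandwiching inclusion completes the proof.
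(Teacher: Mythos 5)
Your proof is correct and follows essentially the same route as the paper's own (very terse) argument: the paper likewise deduces from geometrical finiteness that $M_c-\mathring{\cusp'}$ is compact and then intersects with $C_2(M)\subset M$, and your (i) and (ii) simply write out the two steps the paper leaves implicit, namely that $\overline{C_{2}(M)}$ avoids $M_I$ and that the collar $\cusp-\mathring{\cusp'}$ is compact. (One minor remark: in step (i) proper discontinuity is stronger than needed --- since $W_2$ is $\Gamma$-invariant and closed in $X\cup\Omega$, and the quotient map of any group action by homeomorphisms is open, $C_2(M)$ is automatically closed in $M_c$, so your sequence/injectivity argument, while valid, is not the only way to ``transport the separation''.)
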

\begin{proof}
	By Definition \ref{def:geofinite}, we have that $M_c-\mathring{\cusp'}$ is compact in $M_c$. Therefore $C_2(M)-\mathring{\cusp'}$ is compact in $M_c$. It is also contained in $M$, hence it is a compact subset of $M$.
\end{proof}
\subsection{The energy form and projections}
\label{sec:enefor}
We keep the assumptions on $M$ of Sec.\ref{sec:goopar}, that $M$ is a geometrically finite locally symmetric manifold of real rank one with at most one end. Recall that $E$ is the energy form defined in \eqref{equ:energy}.
\begin{lem}\label{lem:enevar}
	Let $f\in C^{\infty}_c(M)$, and let $\varphi$ be a smooth function. We have
	$$E(\varphi f)=\int_{M}\varphi^2(\|\nabla f\|^2-\ell^2f^2)\dd vol-\int_{M}f^2\varphi\Delta\varphi\dd vol.$$
\end{lem}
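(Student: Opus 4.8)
The plan is to recognize this as an immediate corollary of Barta's formula (Lemma \ref{lem:barta}), requiring essentially no new work beyond bookkeeping. First I would note that the identity is well-posed: since $f\in C_c^\infty(M)$ has compact support and $\varphi$ is smooth, the product $\varphi f$ is again smooth with compact support, so $E(\varphi f)$ is defined by \eqref{equ:energy}. Unwinding that definition gives
\[
E(\varphi f)=\int_M \|\nabla(\varphi f)\|^2\dd vol-\ell^2\int_M \varphi^2 f^2\dd vol.
\]

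Next I would apply Lemma \ref{lem:barta} directly, taking $u=f$ (compactly supported, as required) and keeping the given $\varphi$. This rewrites the first integral as
\[
\int_M \|\nabla(\varphi f)\|^2\dd vol=\int_M \varphi^2\|\nabla f\|^2\dd vol-\int_M f^2\varphi\Delta\varphi\dd vol.
\]
Substituting this into the previous display and grouping the two terms that carry the factor $\varphi^2$ yields
\[
E(\varphi f)=\int_M \varphi^2\bigl(\|\nabla f\|^2-\ell^2 f^2\bigr)\dd vol-\int_M f^2\varphi\Delta\varphi\dd vol,
\]
which is exactly the claimed identity.

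There is no real obstacle here; the content is entirely contained in Lemma \ref{lem:barta}, and the role of $E$ is only to supply the extra $-\ell^2\int \varphi^2 f^2$ term, which combines cleanly with the $\int \varphi^2\|\nabla f\|^2$ term coming out of Barta's identity. The one point worth checking is simply that the compact-support hypothesis of Lemma \ref{lem:barta} is met by $u=f$, which holds by assumption, and that $\varphi$ need only be smooth (not compactly supported) for that lemma, as is the case. Thus the lemma follows with no further computation.
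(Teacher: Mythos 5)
Your proof is correct and is exactly the paper's argument: the paper disposes of this lemma with the single remark that it is ``a direct consequence of Lemma \ref{lem:barta},'' and your write-up simply makes explicit the substitution $u=f$ and the grouping of the $-\ell^2\int\varphi^2 f^2$ term. Nothing further is needed.
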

This is a direct consequence of Lemma \ref{lem:barta}. The following proposition says that in order to calculate the energy form on the entire manifold, it is sufficient to calculate the energy forms on $M_1,M_2,M_3$ and an error term. We want to separate the energy formula, but we need a partition of unity such that the square root of the partition function is also smooth. The exact choice of the partition function is not important and we take $\theone=\frac{\pi}{2}\bar{\varphi_1},\thetwo=\frac{\pi}{2}\bar{\varphi_2}$ and $\varphi_1=\sin\theone,\ \varphi_2=\sin\thetwo$. 
\begin{prop} For $f\in C_c^\infty(M)$, we have
	\begin{equation}\label{equ:enedec}
	\begin{split}
	E(f)&=\int_{M_3}\left((1-\varphi_1^2-\varphi_2^2)(\|\nabla f\|^2-\ell^2f^2)+(\varphi_1\Delta\varphi_1+\varphi_2\Delta\varphi_2)f^2\right)\dd vol\\ 
	&+E(\varphi_1 f)+E(\varphi_2 f)-\int_{M_3^c}\|\nabla \theone\|^2f^2\dd vol.
	\end{split}
	\end{equation}
\end{prop}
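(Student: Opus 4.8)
The plan is to expand $E(\varphi_1 f)$ and $E(\varphi_2 f)$ by Lemma \ref{lem:enevar}, solve the resulting identity for $E(f)$, and then show that all contributions coming from the region $M_3^c$ collapse into the single term $-\int_{M_3^c}\|\nabla\theone\|^2 f^2$. First I would apply Lemma \ref{lem:enevar} with $\varphi=\varphi_1$ and $\varphi=\varphi_2$ and add, obtaining
\[
E(\varphi_1 f)+E(\varphi_2 f)=\int_M(\varphi_1^2+\varphi_2^2)(\|\nabla f\|^2-\ell^2 f^2)\dd vol-\int_M f^2(\varphi_1\Delta\varphi_1+\varphi_2\Delta\varphi_2)\dd vol .
\]
Writing $\|\nabla f\|^2-\ell^2 f^2=(1-\varphi_1^2-\varphi_2^2)(\|\nabla f\|^2-\ell^2 f^2)+(\varphi_1^2+\varphi_2^2)(\|\nabla f\|^2-\ell^2 f^2)$ and integrating, this rearranges to
\[
E(f)=\int_M(1-\varphi_1^2-\varphi_2^2)(\|\nabla f\|^2-\ell^2 f^2)\dd vol+\int_M f^2(\varphi_1\Delta\varphi_1+\varphi_2\Delta\varphi_2)\dd vol+E(\varphi_1 f)+E(\varphi_2 f).
\]
Splitting each of the two remaining integrals as $\int_M=\int_{M_3}+\int_{M_3^c}$, it remains to verify that the two pieces over $M_3^c$ together equal $-\int_{M_3^c}\|\nabla\theone\|^2 f^2\dd vol$.

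The key observation is that on $M_3^c$ the partition functions are tied together by a trigonometric identity. Since the partition of unity $\{\bar\varphi_1,\bar\varphi_2,\bar\varphi_3\}$ is subordinate to the cover, $\bar\varphi_3$ vanishes on $M_3^c$, so $\bar\varphi_1+\bar\varphi_2=1$ there, i.e. $\theone+\thetwo=\frac{\pi}{2}$. Consequently $\varphi_2=\sin\thetwo=\sin(\tfrac{\pi}{2}-\theone)=\cos\theone$ on $M_3^c$, and therefore $\varphi_1^2+\varphi_2^2=\sin^2\theone+\cos^2\theone=1$. This makes the factor $1-\varphi_1^2-\varphi_2^2$ vanish identically on $M_3^c$, so the first of the two $M_3^c$ integrals is zero.

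For the second $M_3^c$ integral I would compute $\varphi_1\Delta\varphi_1+\varphi_2\Delta\varphi_2$ by the chain rule for the Laplace--Beltrami operator, namely $\Delta(F(\theone))=F'(\theone)\Delta\theone+F''(\theone)\|\nabla\theone\|^2$. With $\varphi_1=\sin\theone$ and $\varphi_2=\cos\theone$ this gives $\Delta\varphi_1=\cos\theone\,\Delta\theone-\sin\theone\,\|\nabla\theone\|^2$ and $\Delta\varphi_2=-\sin\theone\,\Delta\theone-\cos\theone\,\|\nabla\theone\|^2$, whence
\[
\varphi_1\Delta\varphi_1+\varphi_2\Delta\varphi_2=-(\sin^2\theone+\cos^2\theone)\|\nabla\theone\|^2+(\sin\theone\cos\theone-\cos\theone\sin\theone)\Delta\theone=-\|\nabla\theone\|^2 .
\]
Thus the second $M_3^c$ integral equals $-\int_{M_3^c}\|\nabla\theone\|^2 f^2\dd vol$, and combining everything with the two integrals over $M_3$ yields \eqref{equ:enedec}.

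I do not expect a genuine obstacle here; the argument is essentially bookkeeping once Lemma \ref{lem:enevar} is in hand, and all integrals converge because $f\in C_c^\infty(M)$. The one genuinely delicate point is the exact cancellation in the last display: it is precisely the reason for parametrizing the partition via $\varphi_1=\sin\theone$, $\varphi_2=\sin\thetwo$ with $\theone+\thetwo=\frac{\pi}{2}$ on $M_3^c$, so that the $\Delta\theone$ cross terms cancel and only the squared-gradient term survives. I would take care to keep the paper's sign convention for $\Delta$ (the divergence of the gradient) consistent throughout this computation, and to note that the identity $\bar\varphi_1+\bar\varphi_2=1$ is used only on $M_3^c$, where $\bar\varphi_3\equiv 0$.
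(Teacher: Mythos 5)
Your proof is correct and follows essentially the same route as the paper: expand $E(\varphi_1 f)+E(\varphi_2 f)$ via Lemma \ref{lem:enevar}, use $\theone+\thetwo=\pi/2$ (hence $\varphi_2=\cos\theone$ and $\varphi_1^2+\varphi_2^2=1$) off $M_3$, and identify $\varphi_1\Delta\varphi_1+\varphi_2\Delta\varphi_2=-\|\nabla\theone\|^2$ there. The only cosmetic difference is that you obtain this last identity by the chain rule with cancellation of the $\Delta\theone$ cross terms, whereas the paper writes it as $\frac{1}{2}\bigl(\Delta(\sin^2\theone+\cos^2\theone)-\|\nabla\sin\theone\|^2-\|\nabla\cos\theone\|^2\bigr)$ --- the same computation in different form.
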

\begin{proof} By Lemma \ref{lem:enevar}, we have
	$$E(\varphi_2 f)+E(\varphi_1 f)=\int (\varphi_2^2+\varphi_1^2)(\|\nabla f\|^2-\ell^2f^2)-\int (\varphi_2\Delta\varphi_2+\varphi_1\Delta\varphi_1)f^2.
	$$
	Since $\theone+\thetwo=\pi/2$ outside of $M_3$, we have for $x\in M_3^c$
	\begin{align*}
	\varphi_2\Delta\varphi_2+\varphi_1\Delta\varphi_1&=
	\sin\theone\Delta\sin\theone+\sin\thetwo\Delta\sin\thetwo\\
	&=\frac{1}{2}(\Delta(\sin^2\theone+\cos^2\theone)-\|\nabla\sin\theone\|^2-\|\nabla\cos\theone\|^2)=-\|\nabla\theone\|^2.
	\end{align*}
	The proof is complete.
\end{proof}
We want to prove that $E(\varphi_1 f)$ and $E(\varphi_2 f)$ are positive after adding an integral over a compact subset, and to give an estimate of the error term $\int_{M_3^c}\|\nabla \theta\|^2f^2$. For this purpose, we will study the following projections. 

Let $\tilde{\rho}$ be the nearest point retraction from $X$ to $W'$. We can extend this map continuously to $X\cup \Omega$, such that if $\xi$ is in $\Omega$, then $\tilde{\rho}(\xi)$ is the first point of contact of $W'$ with an expanding family of horoballs based at $\xi$. (See \cite[Lemma 2.2.4]{bowditch1995} and the explication before Lemma 2.2.4 for more details.) This retraction descends to a map
$$\rho:M_c\rightarrow \convex,$$ which is also continuous by the openness of the covering map.

In the cusp region, recall that we will use the half space model $(\Pi\backslash N)\times \Rplus$ for $M_c(\Pi)$. We write $M_c(\Pi)=(\Pi\backslash N)\times \bb R_{\geq 0}$, which means the equality holds under the coordinate map. Others equalities are similar.
By definition, $\cusp'$ is isometric to a subset of $M_c(\Pi)$, where $\Pi$ is a discrete parabolic subgroup. Hence we can identify the two sets.  By Definition \ref{defi:end} and Proposition \ref{prop:end}, the complement of the cusp region $\cusp'$ in $M_c(\Pi)$ is relatively compact. Hence we can suppose that $(\cusp')^c$ is contained in $(\Pi\backslash N)\times[0,1]$. (Otherwise, we can change the coordinate.) Let $H$ be the quotient of a horoball based at $\infty$, defined by $H=(\Pi\backslash N)\times [1,\infty)\subset \cusp'$. We define $proj_H$ to be the nearest point retraction from $M_c(\Pi)$ to $H$. For a point $(\eta,y)\in M_c(\Pi)$ with $y<1$, the map is given by $proj_H(\eta,y)=(\eta,1)$.
\begin{lem}\label{lem:proj}
	Let $K=\rho(M_2)$ and let $L=proj_{H}(\cusp-\cusp')$. Then $K,L$ are relatively compact in $M$. 
	
	When $\cusp$ is of maximal rank, we can take $\cusp=(\Pi\backslash N)\times \R_{\geq c_1}$ and $\cusp'=(\Pi\backslash N)\times\R_{\geq c_2}$ for some $0<c_1<c_2\leq 1$ and we have $L= (\Pi\backslash N)\times \{ 1\}$.
\end{lem}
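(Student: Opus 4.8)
The plan is to treat both assertions as instances of one principle: each of $K$ and $L$ is the image, under a continuous retraction, of a set whose closure is compact, and each image lands inside $M$ rather than on the ideal boundary $M_I$, so relative compactness in $M$ is automatic. The two geometric facts I would isolate at the outset are: first, geometric finiteness (Definition \ref{def:geofinite}), which makes $M_c-\mathring{\cusp'}$ compact because $\cusp'$ is itself a standard cusp region for the unique end; and second, the fact that $\cusp'$ is a neighbourhood of that end (Definition \ref{defi:end}, Proposition \ref{prop:end}), so that its complement, which we have arranged to lie in $(\Pi\backslash N)\times[0,1]$, is relatively compact in $M_c(\Pi)$.

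For $K=\rho(M_2)$, the starting point is that $M_2=\convex^c-\cusp'$ consists of points lying outside $\cusp'$, so $\overline{M_2}\subset\overline{M_c-\cusp'}=M_c-\mathring{\cusp'}$, which is compact by the first fact above. Since the retraction $\rho:M_c\to\convex$ is continuous (including at the ideal points, by its construction from $\tilde\rho$), the image $\rho(\overline{M_2})$ is compact. Because $\rho$ takes values in $\convex=\Gamma\backslash W'$ with $W'\subset X$, this compact set lies in $\convex\subset M$; hence $K\subset\rho(\overline{M_2})$ is relatively compact in $M$.

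For $L=proj_H(\cusp-\cusp')$, I would note that $\cusp'\subset\cusp$ gives $\cusp-\cusp'\subset(\cusp')^c\subset(\Pi\backslash N)\times[0,1]$, which is relatively compact by the second fact. Since $proj_H$ is the nearest-point retraction onto the horoball $H=(\Pi\backslash N)\times[1,\infty)$ it is continuous, so $proj_H(\overline{\cusp-\cusp'})$ is compact; as $H$ consists of genuine (non-ideal) points, $H\subset M$, and therefore $L$ is relatively compact in $M$. Moreover, every point of $\cusp-\cusp'$ has height $y\leq1$, and the defining formula $proj_H(\eta,y)=(\eta,1)$ then shows $L\subset(\Pi\backslash N)\times\{1\}$.

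In the maximal rank case I would invoke Proposition \ref{prop:mrank} to take both cusp regions as horoball quotients, $\cusp=(\Pi\backslash N)\times\R_{\geq c_1}$ and $\cusp'=(\Pi\backslash N)\times\R_{\geq c_2}$; the arrangement $(\cusp')^c\subset(\Pi\backslash N)\times[0,1]$ forces $c_2\leq1$, and one takes $0<c_1<c_2\leq1$. Then $\cusp-\cusp'=(\Pi\backslash N)\times[c_1,c_2)$, and since every height is at most $1$, $proj_H$ collapses this set onto all of $(\Pi\backslash N)\times\{1\}$, giving $L=(\Pi\backslash N)\times\{1\}$; this is genuinely compact because maximal rank means $\Pi\backslash N$ is compact. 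The only real work in the argument is the topological bookkeeping—confirming that the relevant closures sit inside the compact sets furnished by geometric finiteness and by the ``neighbourhood of the end'' property, and checking that both retractions map into $M$ and not onto $M_I$—so I expect no essential obstacle beyond these verifications.
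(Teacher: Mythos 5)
Your proposal is correct and takes essentially the same route as the paper's proof: relative compactness of $M_2$ and $\cusp-\cusp'$ (from geometric finiteness together with the fact that $\cusp'$ is a neighbourhood of the unique end), continuity of the retractions $\rho$ and $proj_H$, the observation that their images lie in $\convex\subset M$ and $H\subset M$ respectively, and Proposition \ref{prop:mrank} for the maximal rank assertion. The only difference is that you make explicit the bookkeeping the paper leaves implicit, such as the height bound $y\leq 1$ on $\cusp-\cusp'$ forcing $L\subset(\Pi\backslash N)\times\{1\}$ via the formula $proj_H(\eta,y)=(\eta,1)$.
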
 
\begin{proof}
	Since $M_c$ has only one end, by Definition \ref{def:geofinite}, the set $M_2=D^c-\cusp'$ and $\cusp-\cusp'$ are relatively compact in $M_c$.  The continuity of $\rho$ and $proj_H$ implies that $\rho(M_2)$ and $proj_{H}(\cusp-\cusp')$ are relatively compact in $\convex$ and $H$. The latter two sets are in $M$, hence we have the result. 
	
	The last assertion is due to Proposition \ref{prop:mrank}.
\end{proof}
\begin{rem}
This is an important consequence of geometrical finiteness, and the key difficulty is hid in the definition and the existence of the standard cusp region. 
\end{rem}
For $x$ in $M$,  let 
\begin{equation}\label{equ:t1t2}
t_1(x)=d(x,H), t_2(x)=d(x,\convex).
\end{equation}
Later we will see that $t_1,t_2$ are the geometric descriptions of the coordinate $t$ in cusps and the complement of convex set. 
\subsection{Positivity of the energy form}
\label{sec:positive}
In this part, we take into account the topology of the whole manifold, together with the results in standard cusp regions and the complement of convex subsets, to prove the positivity. 
\begin{prop}[the Lax-Phillips inequality]\label{prop:gradient}
	Let $M$ be a geometrically finite locally symmetric manifold of real rank one. There exist a relatively compact open set $U$ in $M$ with smooth boundary and a constant $C_U>0$ such that the following holds. For any compact set $V$ in $M$ there exists $\epsilon_V>0$ such that for all complex valued function $f\in C_c^\infty(M)$ we have
	\begin{equation}\label{ineq:gradient}
		E(f)+C_U\int_{U}|f|^2\dd vol\geq \frac{1}{4}\int_U \|\nabla f\|^2\dd vol+\epsilon_V\int_V|f|^2\dd vol.
	\end{equation}
\end{prop}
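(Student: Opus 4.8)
The plan is to assemble the global Lax--Phillips inequality \eqref{ineq:gradient} from the three local pieces furnished by the partition of unity $\{\bar\varphi_1,\bar\varphi_2,\bar\varphi_3\}$ and the energy decomposition \eqref{equ:enedec}. I would start from that identity, which expresses $E(f)$ as a sum of the two ``cuspidal/exterior'' energies $E(\varphi_1 f)$ and $E(\varphi_2 f)$, a term supported on the compact piece $M_3$, and the nonpositive error term $-\int_{M_3^c}\|\nabla\theone\|^2 f^2$. The point of having chosen $\varphi_j=\sin\theone_j$ with $\theone+\thetwo=\pi/2$ outside $M_3$ is precisely that all the Laplacian cross-terms collapse to the single error $-\|\nabla\theone\|^2 f^2$, so the whole analysis reduces to (i) bounding each $E(\varphi_j f)$ from below and (ii) absorbing this error.

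\textbf{Estimating the two local energies.} The function $\varphi_1 f$ is supported in $M_1=\mathring{\cusp}$, which is isometric to a piece of $(\Pi\backslash N)\times\bb R_{\geq 0}$ with the quasi-warped product metric, so Lemma \ref{prop:cusp} applies. Similarly $\varphi_2 f$ is supported in $M_2=\convex^c-\cusp'$, which sits inside the image of the outer normal exponential map $\nexp(S\times\bb R_{\geq 0})$, so Lemma \ref{prop:complement} applies. Both lemmas have the same shape: given a target constant $C>0$ one obtains a compact set $I$ and a constant $C_1$ with
\begin{equation*}
E(\varphi_j f)+C_1\int_{K_j\times I}(\varphi_j f)^2\dd vol\geq C\int e^{-t_j}(\varphi_j f)^2\dd vol,
\end{equation*}
where $t_1,t_2$ are the geometric coordinates of \eqref{equ:t1t2}. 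The crucial structural input is Lemma \ref{lem:proj}: the ``footprints'' $K=\rho(M_2)$ and $L=proj_H(\cusp-\cusp')$ are \emph{relatively compact} in $M$. Consequently the compact collars $K_j\times I$ needed in the LPI live inside a fixed relatively compact set; I would take $U$ to be a relatively compact open neighborhood with smooth boundary containing $M_3$ together with these collars, which is where the compactly supported terms $C_U\int_U|f|^2$ on the left of \eqref{ineq:gradient} come from.

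\textbf{Absorbing the error and producing the gradient term.} The heart of the matter is controlling $-\int_{M_3^c}\|\nabla\theone\|^2 f^2$, and this is where I expect the main obstacle to lie. Since $\theone$ is a fixed smooth function, $\|\nabla\theone\|^2$ is bounded, and on the parts of $M_3^c$ lying in the cusp and in $\convex^c$ it decays like $e^{-t_1}$ respectively $e^{-t_2}$ relative to the mass $(\varphi_j f)^2$ that dominates $f^2$ there. This is exactly the shape of the right-hand sides produced by Lemmas \ref{prop:cusp} and \ref{prop:complement}: by choosing the target constant $C$ large enough (larger than $\sup\|\nabla\theone\|^2$ times the relevant overlap factor), the exponentially weighted lower bounds $C\int e^{-t_j}(\varphi_j f)^2$ dominate the error $\int_{M_3^c}\|\nabla\theone\|^2 f^2$, so after summing the two local inequalities the error is absorbed at the cost of enlarging $C_U$. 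The subtle point requiring care is that $\|\nabla\theone\|^2 f^2$ is weighted by $f^2$ while the LPI gives back $e^{-t_j}(\varphi_j f)^2$; one must verify that on $M_3^c$ the supports force $\varphi_1$ or $\varphi_2$ close to $1$ (so $(\varphi_j f)^2\gtrsim f^2$ there) and that $\|\nabla\theone\|^2$ is genuinely dominated by the exponential weight, which is why the partition is arranged with $M_3$ compact and the overlaps occurring only in the warped regions.

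Finally, the gradient term $\tfrac14\int_U\|\nabla f\|^2$ and the term $\epsilon_V\int_V|f|^2$ are recovered exactly as in Proposition \ref{prop:finicodimension} and the proof of Proposition \ref{prop:main}: once $E(f)+C_U\int_U|f|^2\geq 0$ with a quantitative margin, I would feed a fixed fraction of the obtained surplus back through the inequality $E(f)+C_U'\int_U|f|^2\geq \tfrac14\int_U\|\nabla f\|^2$ (using that $\|\nabla f\|^2-\ell^2 f^2$ controls $\|\nabla f\|^2$ up to a compactly supported correction) and use the strict positivity of $F$ on any fixed compact $V$ to extract $\epsilon_V\int_V|f|^2$, with $\epsilon_V$ depending on $V$. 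The whole argument is for real $f$; the extension to complex valued $f$ is immediate by applying it to the real and imaginary parts, since all the quadratic forms split.
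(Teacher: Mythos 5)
Your skeleton is the paper's: start from the decomposition \eqref{equ:enedec}, bound $E(\varphi_1 f)$ and $E(\varphi_2 f)$ by the local Lax--Phillips inequalities in the cusp and in the complement of the convex core (Lemmas \ref{prop:cusp} and \ref{prop:complement}, with the compact footprints $K=\rho(M_2)$ and $L=proj_H(\cusp-\cusp')$ of Lemma \ref{lem:proj}), absorb the compactly supported leftovers into $C_U\int_U|f|^2$, and split real and imaginary parts at the end. The genuine gap is the step you yourself flag as ``the subtle point'': you \emph{assert} that $\|\nabla\theta\|^2$ decays like $e^{-t_1}$, resp. $e^{-t_2}$, on $M_3^c$, and justify it only by the partition ``overlaps occurring in the warped regions.'' That gives nothing: for a non-maximal-rank cusp the overlap $M_4=M_1\cap M_2-M_3$ is \emph{not} relatively compact in $M$, the weights $e^{-t_j}$ tend to $0$ as one approaches the ideal boundary along $M_4$, and a merely bounded $\|\nabla\theta\|^2$ can never be dominated by $C e^{-t_j}$ no matter how large you take the target constant $C$. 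The missing idea is the paper's Lemma \ref{lem:nabf}, which rests on three facts you never invoke: (i) the partition functions are smooth on the compactification $M_c$, not just on $M$; (ii) in the half space model the dual metric $g^*=y^2g_1^*$ degenerates quadratically at $y=0$ (Lemma \ref{lem:errortheta}), whence $\|\nabla\theta\|^2\leq Cy^2=Ce^{-2t_1}$ on $M_4$; and (iii) $|t_1-t_2|\leq C_{K,L}$ on $M_4$, via compactness of $K$ and $L$ (inequality \eqref{ineq:t1t2}), which converts the bound into the $\varphi_2$-weighted term $e^{-t_2}\varphi_2^2$. (Your argument is complete only under the extra hypothesis that all cusps have maximal rank, where $\cusp-\cusp'$ is relatively compact and boundedness of $\|\nabla\theta\|^2$ suffices---exactly the special case the paper treats first.) Note also that absorbing the error requires the strengthened Lemmas* \ref{lem:cusp} and \ref{lem:complement}, whose right-hand sides keep the terms $C\int_{M_4}e^{-t_j}f^2$, not merely the inequalities you quote.

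The last paragraph of your proposal contains a second, structural error: you propose to recover $\frac14\int_U\|\nabla f\|^2$ and $\epsilon_V\int_V|f|^2$ ``as in Proposition \ref{prop:finicodimension} and the proof of Proposition \ref{prop:main}.'' That is circular---those results take inequality \eqref{ineq:main} (i.e., the statement being proved) as their hypothesis and consume it; they cannot produce its right-hand side. Moreover your parenthetical claim that ``$\|\nabla f\|^2-\ell^2 f^2$ controls $\|\nabla f\|^2$ up to a compactly supported correction'' is false: $E(f)$ can be made very negative by mass placed outside $U$. The paper's mechanism is the Barta remainder deliberately retained in Proposition \ref{prop:barta}/\ref{prop:product}: the local lemmas keep the nonnegative terms $\frac12\int\|\nabla(e^{\ell t_j}\varphi_j f)\|^2e^{-2\ell t_j}$, and on the relatively compact $U$ one expands $\|\nabla(\varphi_j f e^{\ell t_j})\|^2\geq \frac12\varphi_j^2e^{2\ell t_j}\|\nabla f\|^2-C_{\varphi_j}e^{2\ell t_j}f^2$ and adds the term $\int_{M_3}(1-\varphi_1^2-\varphi_2^2)\|\nabla f\|^2$ from \eqref{equ:enedec} to obtain $\frac14\int_U\|\nabla f\|^2$ after enlarging $C_U$. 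Similarly $\epsilon_V\int_V|f|^2$ does not come from abstract ``strict positivity of $F$'' (which is again what is being proved) but concretely: in the maximal-rank case from the weighted term $\int_{L\times\bb R_{\geq 0}}e^{-t_1}f^2$ with $L\times\bb R_{\geq 0}\supset M_1$, and in general from the Poincar\'e-type inequality \eqref{ineq:poincare} applied to $g=e^{\ell t_1}f$.
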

\begin{proof}[Proof of Theorem \ref{the:main}]
	Our main theorem of this manuscript follows from Proposition \ref{prop:gradient} and \ref{prop:main} with $\eigen=\ell^2$. 
\end{proof}
It remains to prove Proposition \ref{prop:gradient}.
\begin{rem}
	For the simplicity of the exposition, here we will only prove the case that $M$ has only maximal rank cusps ($M$ may have no cusp). For the general case, please see Section \ref{sec:app}. The idea of the proof is the same, but the appearance of non-maximal cusps will add some technical difficulties.
	
	It is sufficient to prove this inequality for real valued functions. The complex version is immediate by separating $f=f_1+if_2$ with $f_1,f_2$ real valued and using the real version for each component $f_1,f_2$.
\end{rem}
Recall that $M_1,M_2$ are subsets of $M_c$, which may intersect the ideal boundary $M_I$. Recall that $t_1,t_2$ are functions on $M$ defined in \eqref{equ:t1t2}.
\begin{lem}\label{lem:cusp} 
	In the standard cusp region, there exist a compact set $U_1$ in $M_1\cap M$ and a constant $C_1>0$ such that the following holds. For any compact set $V$ in $M_1\cap M$ there exists $\epsilon_V>0$ such that for all $f\in C_c^{\infty}(M_1\cap M)$ we have
	\begin{equation}\label{ineq:cusp} 
	\begin{split}
	E(f)+C_1\int_{U_1}f^2\dd vol\geq \frac{1}{2}\int\|\nabla (e^{\ell t_1}f)\|^2e^{-2\ell t_1}\dd vol+\epsilon_V\int_{V} f^2\dd vol.
	\end{split}
	\end{equation}
\end{lem}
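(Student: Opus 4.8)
The plan is to observe that, on a cusp, the weighted gradient integral on the right of \eqref{ineq:cusp} is nothing other than $E(f)$ itself, so the lemma collapses to a one-sided spectral statement that is exactly the cusp Lax–Phillips inequality. First I would reduce to the model cusp. Because the cusp is of maximal rank, its standard region is a horoball quotient and $M_1\cap M$ sits isometrically inside the full cusp $M(\Pi)=(\Pi\backslash N)\times\bb R$ of Lemma \ref{prop:cusp}; extending $f$ by zero, I may treat $f\in C_c^\infty(M(\Pi))$. Under this identification $t_1=d(\cdot,H)$ is the warped‑product variable $t$ of \eqref{equ:duamet}, carrying the density $h=e^{2\ell t}$ (this is precisely the assertion that $t_1$ is the geometric incarnation of the coordinate $t$). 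Hence the exact energy identity \eqref{ineq:cusp0} gives
\[
\int \|\nabla(e^{\ell t_1}f)\|^2 e^{-2\ell t_1}\,\dd vol = E(f),
\]
so the gradient term appearing in \eqref{ineq:cusp} is literally $\tfrac12 E(f)$, and the whole inequality reduces to producing the remaining term $\epsilon_V\int_V f^2$ out of the leftover half of $E(f)$.

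Thus I would only need
\[
\tfrac12 E(f)+C_1\int_{U_1}f^2\,\dd vol\ \geq\ \epsilon_V\int_V f^2\,\dd vol,
\]
which follows from the cusp Lax–Phillips inequality. Applying \eqref{ineq:cusp1} (in the two–sided form coming from Corollary \ref{cor:lpi}, which controls $\int e^{-|t|}f^2$ over all of $\bb R$) with the choice $K=\Pi\backslash N$, legitimate since in the maximal rank case $\Pi\backslash N$ is compact, yields a compact interval $I$ and a constant $C_1$ with $E(f)+C_1\int_{(\Pi\backslash N)\times I}f^2\geq 2\int e^{-|t|}f^2\,\dd vol$. For a fixed compact $V\subset M_1\cap M$ the coordinate $t$ stays in a bounded range on $V$, so $e^{-|t|}\geq\delta_V>0$ there, and the weighted integral dominates $\delta_V\int_V f^2$. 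Halving and taking $U_1=(\Pi\backslash N)\times I$ (relatively compact), $\epsilon_V=\tfrac12\delta_V$, gives the displayed bound; adding back the equality $\tfrac12 E(f)=\tfrac12\int\|\nabla(e^{\ell t_1}f)\|^2 e^{-2\ell t_1}\,\dd vol$ produces \eqref{ineq:cusp}.

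The one genuinely delicate point is the opening identification: one must check that $t_1=d(\cdot,H)$ really is the signed warped coordinate $t$ (so that \eqref{ineq:cusp0} applies verbatim and the right side equals $\tfrac12 E(f)$, rather than a perturbation of it), and simultaneously that the resulting $U_1$ is relatively compact in $M_1\cap M$ while $\epsilon_V$ depends on $V$ alone. In the maximal rank case the compactness of $\Pi\backslash N$ renders both transparent — $K$ may be taken to be the entire fibre and $U_1$ is automatically compact. For non‑maximal cusps, where $\Pi\backslash N$ is noncompact and $K$ must be chosen carefully, this identification and the uniformity of $\epsilon_V$ require the additional analysis postponed to Section \ref{sec:app}, but none of that is needed for the present statement.
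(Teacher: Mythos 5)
Your overall route coincides with the paper's: reduce to the model cusp $(\Pi\backslash N)\times\bb R$, combine the exact energy identity \eqref{ineq:cusp0} with the cusp Lax--Phillips inequality, splitting $E(f)$ into two halves, and use compactness of $\Pi\backslash N$ to take the whole fibre as $K$ and to extract $\epsilon_V\int_V f^2$ from the weighted term. Your insistence on the two-sided form $\int_{K\times\bb R}e^{-|t|}f^2$ from Corollary \ref{cor:lpi} is correct and in fact necessary, since a compact $V$ may sit on the horoball side, where the one-sided integral of \eqref{ineq:cusp1} over $K\times\bb R_{\geq 0}$ gives no control; on that point you are more careful than the literal text of the paper.

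The genuine gap is your opening identification. You assert that $t_1=d(\cdot,H)$ \emph{is} the warped coordinate $t$, so that $\int\|\nabla(e^{\ell t_1}f)\|^2e^{-2\ell t_1}\dd vol=E(f)$, and you claim this is rendered transparent by compactness of $\Pi\backslash N$. But $t_1$ agrees with $t$ only on $H^c$: on the horoball $H$ itself — which is precisely the deep, noncompact part of the cusp where $f$ typically lives — one has $t_1\equiv 0$ while $t$ ranges over $(-\infty,0]$, so the integrand is $\|\nabla f\|^2$ there rather than $\|\nabla(e^{\ell t}f)\|^2e^{-2\ell t}$, and the two differ by terms of size $\ell^2 f^2$, exactly the order of the quantity being estimated. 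Concretely, take $f=e^{-\ell t}\chi(t)$ with $\chi$ a bump supported in $[-2n,-n]$ (deep in the horoball, admissible since $\Pi\backslash N$ is compact): then $E(f)=\mathrm{vol}(\Pi\backslash N)\int\chi'^2\rightarrow 0$ as $n\rightarrow\infty$, while $\int\|\nabla(e^{\ell t_1}f)\|^2e^{-2\ell t_1}\dd vol=\int\|\nabla f\|^2\dd vol\rightarrow \ell^2\,\mathrm{vol}(\Pi\backslash N)$, so your displayed equality fails by a definite amount that cannot be absorbed, and compactness of the fibre is irrelevant — the discrepancy lives in the $t$-direction inside $H$, not in the fibre. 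The paper's own proof never claims your identity: it states $t_1=t$ only on $H^c$ and runs the entire bookkeeping (adding \eqref{ineq:cusp0} and \eqref{ineq:cusp1} and halving) with the signed coordinate $t$; the gradient term in \eqref{ineq:cusp} is to be read through that coordinate. As written, your argument proves the inequality with $t$ in place of $t_1$, and the passage to $t_1$ — the very step you flagged as the delicate point — is exactly the step your justification does not cover.
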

\begin{proof}
Recall that we identify $M_1=\cusp'$ as a subset of $(\Pi\backslash N)\times\bb R_{\geq 0}$. Due to the choice of $H$, we have $\cusp-\cusp'\subset H^c$, and the coordinate $y$ of a point $x$ under the horospherical model satisfies $y=e^{-t_1(x)}$ for $x\in H^c$. We have that $t_1(x)$ equals the coordinate $t$ in horospherical model when $x\in H^c$. Using Lemma \ref{prop:cusp} with compact set $L$ and constant $C=2$, and adding \eqref{ineq:cusp0} and \eqref{ineq:cusp1}, we obtain inequality \eqref{ineq:cusp} with the last term replaced by $\int_{L\times \bb R_{\geq 0}}e^{-t_1}f^2\dd vol $. Since $\cusp$ is of maximal rank, by Lemma \ref{lem:proj} we have $L\times \bb R_{\geq 0}\simeq M_c(\Pi)\supset M_1$. The proof is complete.
\end{proof}
\begin{lem}\label{lem:complement}
 In the complement of the convex core, there exist a compact set $U_2$ in $M_2\cap M$ and a constant $C_2>0$ such that the following holds. For any compact set $V$ in $M_2\cap M$ there exists $\epsilon_V>0$ such that for all $f\in C_c^{\infty}(M_2\cap M)$ we have
\begin{align}
 E(f)+C_2\int_{U_2}f^2 \dd vol\geq &\frac{1}{2}\int\|\nabla (e^{\ell t_2}f)\|^2e^{-2\ell t_2}\dd vol+\epsilon_V\int_Vf^2 \dd vol. \label{ineq:complement}
\end{align}
\end{lem}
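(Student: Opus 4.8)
The plan is to run the proof of Lemma~\ref{lem:cusp} again, replacing the cusp data by their analogues for the complement of the convex core: the horospherical model gives way to the normal exponential coordinates of Lemma~\ref{lem:normal} and Lemma~\ref{volumefunction}, the cusp estimate Lemma~\ref{prop:cusp} is replaced by Lemma~\ref{prop:complement}, the coordinate $t_1$ by $t_2$, and the retraction $proj_H$ by the nearest point retraction $\rho$. The geometric fact to record first is that, since $\convex$ is geodesically convex with smooth boundary $S=\partial\convex$ and $\nexp\colon S\times\Rplus\to M-\mathring{\convex}$ is a diffeomorphism, the normal coordinate $t$ of a point $x\in M-\mathring{\convex}$ coincides with $t_2(x)=d(x,\convex)$, while its base point on $S$ is exactly $\rho(x)$; thus $x=\nexp(\rho(x),t_2(x))$.

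First I would set $K=\overline{\rho(M_2)}$, which is relatively compact by Lemma~\ref{lem:proj}. Every point of $M_2\cap M$ lies in $M-\mathring{\convex}$ and retracts into $K$, so $M_2\cap M\subset\nexp(K\times\Rplus)$ and each $f\in C_c^\infty(M_2\cap M)$ is supported there. I then apply Lemma~\ref{prop:complement} to this $K$ with a constant $C>4\ell^2$, obtaining a compact $I\subset\Rplus$ and $C_1>0$ for which \eqref{ineq:complement0} and \eqref{ineq:complement1} hold.

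The combination is the one used in the cusp case, with a single extra term to absorb. Adding half of the inequality \eqref{ineq:complement0} to half of \eqref{ineq:complement1}, and using that $f$ vanishes outside $\nexp(K\times\Rplus)$ to localise the term $\int e^{-t_2}f^2$ produced by \eqref{ineq:complement0}, gives
\[
E(f)+\tfrac12C_1\int_{\nexp(K\times I)}f^2\,\dd vol\geq\tfrac12\int\|\nabla(e^{\ell t_2}f)\|^2e^{-2\ell t_2}\,\dd vol+\Big(\tfrac{C}{2}-2\ell^2\Big)\int_{\nexp(K\times\Rplus)}e^{-t_2}f^2\,\dd vol.
\]
Since $C>4\ell^2$ the last coefficient is positive, so this term is nonnegative; for a compact $V\subset M_2\cap M$ it dominates $\epsilon_V\int_Vf^2$ because $t_2$ is bounded on $V$, say $t_2\leq T_V$, and $V\subset\nexp(K\times\Rplus)$, whence $\big(\tfrac{C}{2}-2\ell^2\big)\int_{\nexp(K\times\Rplus)}e^{-t_2}f^2\geq\big(\tfrac{C}{2}-2\ell^2\big)e^{-T_V}\int_Vf^2$. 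Taking $U_2=\nexp(K\times I)$ (intersected with $M_2\cap M$, the only part seen by $f$), $C_2=\tfrac12C_1$ and $\epsilon_V=(\tfrac{C}{2}-2\ell^2)e^{-T_V}$ yields \eqref{ineq:complement}.

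The only feature absent from the cusp case is the negative term $-4\ell^2\int e^{-t_2}f^2$ in \eqref{ineq:complement0}, which reflects that \eqref{ineq:complement0} is merely an inequality rather than the equality \eqref{ineq:cusp0}; this is harmless and is swallowed by choosing $C$ large in \eqref{ineq:complement1}, so I do not expect a genuine obstacle here. The substantive points---compactness of the projected base $\rho(M_2)$ and the identification of $t_2$ with the normal coordinate---are supplied by Lemma~\ref{lem:proj} and the convexity of $\convex$, so the real work has already been done upstream; the remaining care is only the routine bookkeeping that $U_2$ sits inside $M_2\cap M$.
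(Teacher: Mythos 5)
Your proposal follows the paper's proof almost line for line: the paper likewise takes $K=\rho(M_2)$ (relatively compact by Lemma \ref{lem:proj}), applies Lemma \ref{prop:complement} with a large constant (it uses $C=2(4\ell^2+1)$, playing the same role as your $C>4\ell^2$), adds \eqref{ineq:complement0} and \eqref{ineq:complement1}, uses that $\supp f\subset\nexp(K\times\bb R_{\geq 0})$ to identify the unrestricted and restricted integrals of $e^{-t_2}f^2$, and extracts $\epsilon_V$ from the leftover positive term $\int e^{-t_2}f^2$ via the boundedness of $t_2$ on the compact set $V$ --- your halving of the two inequalities is only a cosmetic variant of this.

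The one point you pass over is the verification that $\convex$ actually satisfies all the hypotheses of Lemma \ref{volumefunction}, on which Lemma \ref{prop:complement} rests. You correctly derive the diffeomorphism property of $\nexp$ and the identity $t=t_2(\nexp(x,t))$ from convexity (as the paper does, by lifting to $X$, using the nearest point retraction $\tilde{\rho}$ as the inverse of the normal exponential map, and descending to the quotient), but you never address the remaining hypothesis: a uniform upper bound on the second fundamental form of $S=\partial\convex$. This bound is genuinely needed --- it is what yields the upper bound $h(x,t)\leq C_1e^{2\ell t}$ in \eqref{equ:partialh}, and hence the two-sided bound on $h$ required by Corollary \ref{cor:lpi} to produce \eqref{ineq:complement1}. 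It is not automatic for an arbitrary smooth convex set, and it is not ``done upstream'' in the form you need: the paper secures it in the proof of this very lemma by invoking \cite[Thm.1, Prop.6]{parkkonenpaulin2012}, whose construction of the $\Gamma$-invariant smoothing $W'$ comes with a bounded second fundamental form on its boundary. Adding one sentence with this citation closes the gap; everything else in your argument is sound (and your remark that $U_2=\nexp(K\times I)$ should be intersected with $M_2\cap M$ is a harmless bookkeeping point the paper glosses over in the same way).
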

\begin{proof}
	We first verify that $\convex$ satisfies the conditions in Lemma \ref{volumefunction}.
	
	Recall that $\convex=\Gamma\backslash W'$, where $W'$ is a convex subset of $X$ with smooth boundary. By convexity, the normal exponential map, given by $\exp_x(t\normal(x))$, is a diffeomorphism from $\partial W'\times \bb R_{\geq 0}$ onto $X-\mathring{W'}$, and satisfies $t=d(\exp_x(t\normal(x)),W')$. With the help the nearest point retraction $\tilde{\rho}$, the inverse of the normal exponential map from $X-\mathring{W'}$ to $\partial W'\times \bb R_{\geq 0}$ is given by 
	\[x\mapsto (\tilde{\rho}(x), d(x,W')). \] 
	Descend to the quotient space. Let $S$ be the boundary of $\convex$.  Then the normal exponential map $\varbet:(x,t)\mapsto\exp_x(t\normal(x))$ from $S\times\bb R_{\geq 0}$ to $M-\mathring{\convex}$ is again a diffeomorphism, and 
	\[t=d(\varbet(x,t),S)=t_2(\varbet(x,t)).\] 
	The upper bound of the second fundamental form is due to \cite[Thm.1, Prop.6]{parkkonenpaulin2012}, that is the obtained convex set $W'$ has bounded second fundamental form on its boundary.
	
	Applying Lemma \ref{prop:complement} with the set $K=\rho(M_2)$, defined in Lemma \ref{lem:proj}, and the constant $C=2(4\ell^2+1)$, there exists a bounded interval $I\in\bb R_{\geq 0}$ such that \eqref{ineq:complement1} holds for $U_2=K\times I$. Adding \eqref{ineq:complement0} and \eqref{ineq:complement1} implies the result.
\end{proof}
\begin{proof}[Proof of Proposition \ref{prop:gradient}]
	In view of \eqref{equ:enedec}, the main problem is the term $\int_{M_3^c}\|\nabla\theta\|^2f^2\dd vol$. The support of $\|\nabla\theta\|$ is contained in $\cusp-\cusp'$, which may not be compact. But with the hypothesis that the manifold has only maximal rank cusps, the region $\cusp-\cusp'$ is already relatively compact in $M$ due to Lemma \ref{lem:proj}. Because $\cusp$ does not intersect the ideal boundary and the complement of $\cusp'$ is relatively compact. Hence $\|\nabla\theta\|^2$ is a bounded function supported on $\cusp-\cusp'$.
	
	The compact set $V$ is replaced by $V\cap M_1$ and $V\cap M_2$ in Lemma \ref{lem:cusp} and Lemma \ref{lem:complement}. Using Lemma \ref{lem:cusp} and \ref{lem:complement}, we obtain $U_1$, $U_2$. Since $\cusp-\cusp',M_3,\ U_1,\ U_2$ are relatively compact in $M$, we  can find a relatively compact open set $U\subset M$ with smooth boundary, which contains the four sets. By \eqref{equ:enedec}, \eqref{ineq:cusp} and \eqref{ineq:complement}, there exists a constant $C_4$ large enough such that
	\begin{align*}
	E(f)+C_4\int_{U}f^2\dd vol\geq 
	\frac{1}{2}\int\left(\|\nabla (\varphi_1 fe^{\ell t_1})\|^2e^{-2\ell t_1}+\|\nabla(\varphi_2 fe^{\ell t_2})\|^2e^{-2\ell t_2}\right)\dd vol\\
	+\int_{M_3}(1-\varphi_1^2-\varphi_2^2)\|\nabla f\|^2\dd vol+\epsilon_V\int_V f^2\dd vol.
	\end{align*}	
We restrict our computation on $U\cap\supp \varphi_2\subset M_2$
\begin{align*}
\|\nabla(\varphi_2 fe^{\ell t_2})\|^2& \geq \frac{1}{2}(\varphi_2e^{\ell t_2})^2\|\nabla f\|^2-f^2\|\nabla(\varphi_2e^{\ell t_2})\|^2\geq e^{2\ell t_2}\left(\frac{1}{2}\varphi_2^2\|\nabla f\|^2-C_{\varphi_2}f^2\right), 
\end{align*}
where we use the estimate $\sup_{x\in U}\|\nabla\varphi_2(x)\|<\infty$, thanks to the relative compactness of $U$.
In the standard cusp region we have the same estimate. Therefore taking $C_U$ large enough, we have \eqref{ineq:gradient}.
\end{proof}

\section{Non-maximal rank}
\label{sec:app}
As stated in Section \ref{sec:positive}, we will give a proof of Proposition \ref{prop:gradient} without the assumption that $M$ has only maximal rank cusps.
\subsection{Compactification and estimate at infinite}
\label{sec:errter}
Let  $g$ be a Riemannian metric on a manifold $\mani$. We define the musical isomorphism as follows (see \cite{gallot2004riemannian} for more details). For a vector $X$ in $T_x\mani$, let $X^b$ be the unique 1-form such that $X^b(v)=g(X,v)$ for every $v\in T_x\mani$. This isomorphism gives a dual tensor field (the symmetric covariant 2-tensor fields) $g^*$ of $g$, and $(\nabla f)^b=\dd f$.  

We will consider the compactification of a Riemannian manifold $\mani=L\times \bb (0,1]$ with metric given by $g=g_1(x,y)/y^2$, where $g_1(x,y)$ is a positive definite symmetric bilinear form on $T_{(x,y)}\mani$. Now we add $y=0$ to obtain a differential manifold with boundary, called $\bar{\mani}$. By definition, we have (using local coordinate vectors $(\frac{\partial}{\partial x_i})_{1\leq i\leq n}$ and $(\dd x_i)_{1\leq i\leq n}$, $g^*$ can be written as the inverse matrix of $g$) 
\begin{equation}\label{equ:gg1}
g^*=(g_1/y^2)^*=y^2g_1^*.
\end{equation}
Suppose that $g_1^*$ can be smoothly extended to $y=0$, but $g_1^*(x,0)$ may degenerate to a positive semidefinite form, that means $g_1(x,y)$ may blow up when $y\rightarrow 0$.  
\begin{lem}\label{lem:errortheta} Assume that the Riemannian metric on $L\times(0,1]$ satisfies the above condition. Let $f$ be a smooth function on $L\times[0,1]$. For every compact subset $U$ of $L$, there exists a constant $C>0$ such that for any $(x,y)\in U\times\bb (0,1] $ we have
	$$\|\nabla f(x,y)\|^2\leq Cy^2.$$
\end{lem}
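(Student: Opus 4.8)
The plan is to reduce the inequality to a boundedness statement on a compact set by measuring the gradient with the dual metric $g^*$ rather than with $g$ itself; this is the manoeuvre that makes the factor $y^2$ appear explicitly.

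First I would recall the identity $\|\nabla f\|^2 = g^*(\dd f, \dd f)$. This follows at once from the musical isomorphism discussed just before the lemma: the dual tensor satisfies $g^*(X^b, Y^b) = g(X, Y)$, so taking $X = Y = \nabla f$ and using $(\nabla f)^b = \dd f$ yields $g^*(\dd f, \dd f) = g(\nabla f, \nabla f) = \|\nabla f\|^2$.

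Next I would invoke the scaling relation \eqref{equ:gg1}, namely $g^* = y^2 g_1^*$, to rewrite this as
\[
\|\nabla f\|^2 = y^2\, g_1^*(\dd f, \dd f).
\]
It then remains to bound the factor $g_1^*(\dd f, \dd f)$ uniformly over $U \times (0,1]$, and this is exactly where the hypotheses are used. Since $f$ is smooth on $L \times [0,1]$, the coefficients of $\dd f$ are continuous, hence bounded, on the compact set $U \times [0,1]$; and by assumption $g_1^*$ extends smoothly across $y = 0$. Therefore $(x,y) \mapsto g_1^*(\dd f, \dd f)$ is continuous on the compact set $U \times [0,1]$ and attains a finite maximum, which I would call $C$. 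Restricting to $y > 0$ gives $\|\nabla f\|^2 = y^2\, g_1^*(\dd f, \dd f) \leq C y^2$, as required.

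I do not expect any genuine obstacle here once the correct viewpoint is fixed; the only subtlety---and the conceptual point of the lemma---is that one must use the dual metric. Estimating $\|\nabla f\|^2$ directly through $g = g_1/y^2$ would suggest a factor $y^{-2}$ and obscure the decay, whereas passing to $g^*$ trades the metric $g_1$, which may blow up as $y \to 0$, for the smoothly extendable tensor $g_1^*$, which is precisely the object one can control on the compactification $\bar{\mani}$. It is worth noting that the possible degeneration of $g_1^*(x,0)$ into a merely positive semidefinite form causes no trouble: the proof uses only an upper bound on $g_1^*(\dd f, \dd f)$, never its positivity.
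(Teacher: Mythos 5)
Your proof is correct and follows essentially the same route as the paper: both rewrite $\|\nabla f\|^2 = g^*(\dd f,\dd f) = y^2\, g_1^*(\dd f,\dd f)$ via \eqref{equ:gg1} and then bound $g_1^*(\dd f,\dd f)$ by continuity on the compact set $U\times[0,1]$, using the smoothness of $f$ and the smooth extension of $g_1^*$ across $y=0$. Your added observation that only an upper bound on $g_1^*(\dd f,\dd f)$ is needed, so the possible degeneracy of $g_1^*(x,0)$ is harmless, is accurate and a nice clarification of what the paper leaves implicit.
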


\begin{proof}
	By definition and \eqref{equ:gg1}, we have
	\begin{equation*}
	\|\nabla f\|^2=g^*(\dd f,\dd f)=y^2g_1^*(\dd f,\dd f).
	\end{equation*}  
	By the smoothness of $f$ and $g_1^*$ on the boundary and the compactness, there exists a constant $C>0$ such that
	$	\|\nabla  f\|^2\leq Cy^2.$
\end{proof}

\subsection{Manifolds with non maximal rank cusp}
We will give a proof of Proposition \ref{prop:gradient} with non-maximal rank cusps. In Section \ref{sec:positive}, the assumption that $M$ has only maximal rank cusps is only used in Lemma \ref{lem:cusp} and in the proof of Proposition \ref{prop:gradient}. The proof works similarly, and we will strengthen Lemma \ref{lem:cusp}, \ref{lem:complement} and give a control of $\int_{M_3^c}\|\nabla\theta\|^2f^2\dd vol$. 

If $\cusp$ is a standard cusp region of non-maximal rank, then the region $(\cusp-\cusp')\cap M$ is not relatively compact in $M$. Because the cusp region $\cusp$ intersects the ideal boundary $M_I=M_c-M$. To overcome this difficulty, we use the compactness in $M_c$.
The fact that the partition of unity is smooth not only on $M$ but also on $M_c$ is the key point to apply Lemma \ref{lem:errortheta}.

Recall that $M$ is a geometrically finite rank one locally symmetric manifold. We have a covering $M_c=M_1\cup M_2\cup M_3$, where $M_1$ is the cusp region and $M_2$ is a subset of the complement of the convex core.
Recall that $D=\Gamma\backslash W'$ is a neighbourhood of the convex core and $H$ is a subset of $M_1$, which is the quotient of a horoball. For $x$ in $M$, in \eqref{equ:t1t2} we have defined 
\begin{equation*}
t_1(x)=d(x,H), t_2(x)=d(x,\convex).
\end{equation*}
Recall that $\varphi_1,\varphi_2$ are two smooth functions supported on $M_1,M_2$, respectively, and $\varphi_1^2+\varphi_2^2=1$ outside of $M_3$.  Let $M_4=M_1\cap M_2-M_3$. The following lemma is the key additional ingredient for the general case.
\begin{figure}\label{fig:cuspregion}
	\begin{center}
		\begin{tikzpicture}[scale=1.7]
		\draw [domain=0:pi, samples=200] plot ({2*cos(\x r)},{2*sin(\x r)});
		\draw [domain=0:pi, samples=200] plot ({3*cos(\x r)},{3*sin(\x r)});
		\draw (-4,0) -- (4,0);
		\draw (-3,3.75) -- (-3,3.25);
		\draw (3,3.75) -- (3,3.25);
		\draw [<-] (-2.5,3.75) -- (-1,3.75);
		\draw [<-] (2.5,3.75) -- (1,3.75);
		\draw (0,3.75) node[above]{$L$};
		\draw ({sqrt(2)/2},0) -- ({3*sqrt(2)/2},{2*sqrt(2)});
		\draw (-{sqrt(2)/2},0) -- (-{3*sqrt(2)/2},{2*sqrt(2)});
		\draw ({sqrt(2)/2},0) -- ({2.2*sqrt(2)},{1.1*sqrt(2)});
		\draw (-{sqrt(2)/2},0) --  (-{2.2*sqrt(2)},{1.1*sqrt(2)});
		\draw (0,3) node[above]{$\cusp'$};
		\draw (0,2) node[above]{$\cusp$};
		\draw (0,0) node[above]{$D$};
		\draw ({sqrt(2)},0) node[below]{$C_2$};
		\draw [<-] ({sqrt(2)},5/8) -- ({sqrt(2)},-1/16);
		\draw (-4,3.5) -- (4,3.5);
		\draw (4,3.5) node[above]{$H$};
		\draw (3.5,2) node[below]{$proj_H$};
		\draw [<-] (4,3) -- (4,1);
		\draw (2.5,1.5) node[below]{$\rho$};
		\draw [->] (2.5,1.5) -- (1.8,2);
		\draw (2.5,0.5) node[below]{$M_4$};
		\draw (2.976344,0.376000) -- (3.075556,0.388533);
		\draw (2.905749,0.746070) -- (3.002608,0.770939);
		\draw (2.789329,1.104374) -- (2.882307,1.141186);
		\draw (2.628920,1.445261) -- (2.716551,1.493436);
		\draw (2.427051,1.763356) -- (2.507953,1.822134);
		\draw (2.186906,2.053641) -- (2.259803,2.122096);
		\draw (1.912272,2.311540) -- (1.976014,2.388591);
		\draw (1.607480,2.532984) -- (1.661063,2.617417);
		\draw (1.277338,2.714481) -- (1.319916,2.804964);
		\draw (0.927051,2.853170) -- (0.957953,2.948275);
		\draw (0.562144,2.946862) -- (0.580882,3.045090);
		\draw (0.188372,2.994080) -- (0.194651,3.093883);
		\draw (-0.188372,2.994080) -- (-0.194651,3.093883);
		\draw (-0.562144,2.946862) -- (-0.580882,3.045090);
		\draw (-0.927051,2.853170) -- (-0.957953,2.948275);
		\draw (-1.277338,2.714481) -- (-1.319916,2.804964);
		\draw (-1.607480,2.532984) -- (-1.661063,2.617417);
		\draw (-1.912272,2.311540) -- (-1.976014,2.388591);
		\draw (-2.186906,2.053641) -- (-2.259803,2.122096);
		\draw (-2.427051,1.763356) -- (-2.507953,1.822134);
		\draw (-2.628920,1.445261) -- (-2.716551,1.493436);
		\draw (-2.789329,1.104374) -- (-2.882307,1.141186);
		\draw (-2.905749,0.746070) -- (-3.002608,0.770939);
		\draw (-2.976344,0.376000) -- (-3.075556,0.388533);
		\draw (1.984229,0.250666) -- (2.083441,0.263200);
		\draw (1.937166,0.497380) -- (2.034025,0.522249);
		\draw (1.859553,0.736249) -- (1.952531,0.773062);
		\draw (1.752613,0.963507) -- (1.840244,1.011683);
		\draw (1.618034,1.175571) -- (1.698936,1.234349);
		\draw (1.457937,1.369094) -- (1.530834,1.437549);
		\draw (1.274848,1.541026) -- (1.338590,1.618078);
		\draw (1.071654,1.688656) -- (1.125236,1.773089);
		\draw (0.851559,1.809654) -- (0.894137,1.900137);
		\draw (0.618034,1.902113) -- (0.648936,1.997219);
		\draw (0.374763,1.964575) -- (0.393501,2.062803);
		\draw (0.125581,1.996053) -- (0.131860,2.095856);
		\draw (-0.125581,1.996053) -- (-0.131860,2.095856);
		\draw (-0.374763,1.964575) -- (-0.393501,2.062803);
		\draw (-0.618034,1.902113) -- (-0.648936,1.997219);
		\draw (-0.851559,1.809654) -- (-0.894137,1.900137);
		\draw (-1.071654,1.688656) -- (-1.125236,1.773089);
		\draw (-1.274848,1.541026) -- (-1.338590,1.618078);
		\draw (-1.457937,1.369094) -- (-1.530834,1.437549);
		\draw (-1.618034,1.175571) -- (-1.698936,1.234349);
		\draw (-1.752613,0.963507) -- (-1.840244,1.011683);
		\draw (-1.859553,0.736249) -- (-1.952531,0.773062);
		\draw (-1.937166,0.497380) -- (-2.034025,0.522249);
		\draw (-1.984229,0.250666) -- (-2.083441,0.263200);
		\end{tikzpicture}
	\end{center}
	\caption{Projection in standard cusp region}
\end{figure}
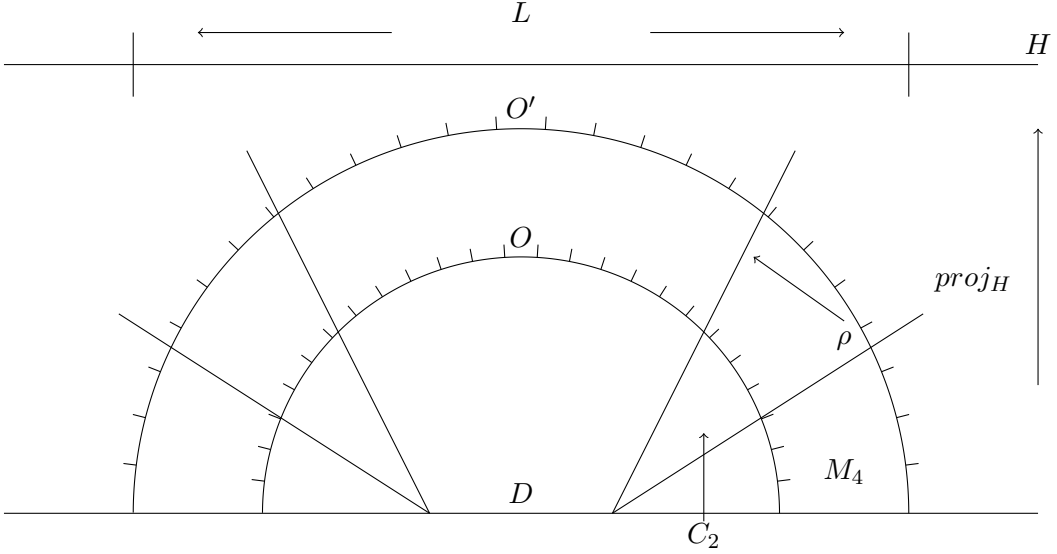
\begin{lem}\label{lem:nabf}
	For every smooth function $ f$ on $M_4$, there exists $C_ f>0$, such that for all $x\in M_4\cap M$
	\begin{equation}\label{ineq:theta}
	\|\nabla f(x)\|^2\leq C_{ f}\left(e^{-t_1(x)}\varphi_1(x) ^2+e^{-t_2(x)}\varphi_2(x) ^2\right).
	\end{equation}
\end{lem}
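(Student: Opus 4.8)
The plan is to estimate $\|\nabla f\|^2$ separately by $e^{-2t_1}$ using the cusp structure of $M_1$ and by $e^{-2t_2}$ using the convex-core structure of $M_2$, and then to recombine the two bounds through the identity $\varphi_1^2+\varphi_2^2=1$ valid on $M_4$. This identity holds because $M_4\subset M_3^c$ and $\supp\bar{\varphi_3}\subset M_3$, so $\bar{\varphi_1}+\bar{\varphi_2}=1$ on $M_4$; together with $\theone+\thetwo=\pi/2$ and $\varphi_i=\sin(\frac{\pi}{2}\bar{\varphi_i})$ this gives $\varphi_1^2+\varphi_2^2=\sin^2\theone+\cos^2\theone=1$. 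Writing $\|\nabla f\|^2=\varphi_1^2\|\nabla f\|^2+\varphi_2^2\|\nabla f\|^2$ and using $e^{-2t_i}\le e^{-t_i}$ (as $t_i\ge 0$), it is therefore enough to prove
\begin{equation}\label{ineq:twobounds}
\|\nabla f(x)\|^2\le C_1 e^{-2t_1(x)}\quad\text{and}\quad \|\nabla f(x)\|^2\le C_2 e^{-2t_2(x)}\qquad (x\in M_4\cap M).
\end{equation}
The crucial structural fact is that $M_4\subset M_1\cap M_2$, so every point of $M_4$ lies simultaneously in the cusp region and in the complement of $\convex$, and both estimates are available there; moreover $M_4\subset M_2$ is relatively compact in $M_c$ by Lemma \ref{lem:proj}, so $\overline{M_4}$ is compact in $M_c$ and a point of $M_4\cap M$ escapes to infinity in $M$ only by approaching the ideal boundary $M_I$.

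For the first bound I work in the half space model of $M_1$, with coordinate $(\nu,y)\in(\Pi\backslash N)\times(0,1)$ where $y=e^{-t_1}$; this is legitimate because $M_4\subset \convex^c-\cusp'\subset H^c$, where $t_1$ coincides with the horospherical coordinate. By \eqref{equ:duamet} the metric is $g=\dd t_1^2+\mathrm{diag}(e^{2t_1}Id_m,e^{4t_1}Id_q)$, and substituting $y=e^{-t_1}$ gives $g=y^{-2}g_1$ whose rescaled inverse $g_1^*=y^{-2}g^*$ equals $\mathrm{diag}(1,Id_m,y^2Id_q)$ in the frame $(\partial_y,V^1,V^2)$, hence extends smoothly (and degenerately on $V^2$) to $y=0$. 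This is exactly the hypothesis of Lemma \ref{lem:errortheta}, and since the half space chart is the one defining the differential structure of $M_c$ near this face of $M_I$, the function $f$ is smooth up to $y=0$. Applying Lemma \ref{lem:errortheta} on the compact image of $\overline{M_4}$ in $\Pi\backslash N$ gives $\|\nabla f\|^2\le C_1 y^2=C_1e^{-2t_1}$.

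For the second bound I use the outer normal exponential coordinates $\nexp\colon S\times\Rplus\to M-\mathring{\convex}$ of Section \ref{sec:comcov}, with $S=\partial\convex$, second coordinate $t_2=d(\cdot,\convex)$, and $y=e^{-t_2}$; the cross-section coordinate of $x$ is $\rho(x)$, which for $x\in M_4$ stays in the compact set $K=\rho(M_2)$ of Lemma \ref{lem:proj}. Substituting the explicit Jacobi matrix $A(t)$ of Lemma \ref{lem:at} into $g=\dd t_2^2+g(x,t_2)$ shows the transverse metric again grows like $e^{2t_2}$ on $V^1$ and $e^{4t_2}$ on $V^2$, so once more $g=y^{-2}g_1$ and, as one checks directly from the formula for $A(t)$ via \eqref{equ:gg1}, the rescaled inverse $g_1^*=y^{-2}g^*$ extends smoothly to $y=0$. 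Lemma \ref{lem:errortheta} then gives $\|\nabla f\|^2\le C_2 y^2=C_2 e^{-2t_2}$, which is the second inequality in \eqref{ineq:twobounds}. Combining the two estimates with $\varphi_1^2+\varphi_2^2=1$ as in the first paragraph completes the proof.

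I expect the main obstacle to be the second estimate, and specifically the requirement that $f$ be smooth in the coordinates $(\rho(x),y)$ up to $y=0$, i.e. that the normal exponential compactification of $M-\mathring{\convex}$ be compatible with the differential structure of $M_c$. This is the analogue, for the convex core, of the compatibility of two half space models invoked in Section \ref{sec:rankone}. I would prove it by noting that the geodesics $t\mapsto\exp_x(t\normal(x))$ depend smoothly on $x\in S$ and converge as $t\to\infty$ to ideal points, so the extension of $\nexp$ to $S\times(\Rplus\cup\{\infty\})$ is a diffeomorphism onto a neighbourhood in $M_c$ of the corresponding face of $M_I$, compatible with the half space charts. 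Once this is granted, both bounds in \eqref{ineq:twobounds} follow mechanically from Lemma \ref{lem:errortheta}, and the recombination is immediate.
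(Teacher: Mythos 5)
Your first estimate and your recombination step coincide with the paper's argument: the paper likewise applies Lemma \ref{lem:errortheta} in the half space model over the relatively compact cross-section $L=proj_H(\cusp-\cusp')$ from Lemma \ref{lem:proj}, obtains $\|\nabla f\|^2\le C'y^2=C'e^{-2t_1}$ with $y=e^{-t_1}$ on $M_4\cap M\subset H^c$, and uses $\varphi_1^2+\varphi_2^2=1$ on $M_4$ together with $t_1,t_2\ge 0$. Where you diverge is the second estimate, and that is where there is a genuine gap. To get $\|\nabla f\|^2\le C_2e^{-2t_2}$ you run Lemma \ref{lem:errortheta} a second time, in the normal exponential compactification of $M-\mathring{\convex}$. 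Two things are needed for that. The first, smooth extension of $g_1^*=y^2g^*$ to $y=0$ in these coordinates, is indeed verifiable from Lemma \ref{lem:at}: writing $A(t)=\tilde A(y)\,\mathrm{diag}(y^{-1}Id_m,y^{-2}Id_q)$ with $y=e^{-t}$, the matrix $\tilde A(y)$ is polynomial in $y^2$ and invertible at $y=0$ because $B$ is positive semidefinite, so $y^{-2}G^{-1}=\tilde A^{-T}\,\mathrm{diag}(Id_m,y^2Id_q)\,\tilde A^{-1}$ extends smoothly with the expected degeneration. The second requirement is that $f$ be smooth up to $y=0$ \emph{in this chart}, and this you do not have: the hypothesis of the lemma gives smoothness of $f$ on $M_4$ with respect to the differential structure of $M_c$, which near $M_I$ is defined by half space charts (Section \ref{sec:rankone}). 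So your argument stands or falls with the compatibility of the normal exponential compactification with that structure. You flag this yourself, but your justification --- the geodesics depend smoothly on $x\in S$ and converge to ideal points, hence the extended map is a diffeomorphism --- is not a proof: pointwise convergence of a smooth family of geodesics to boundary points says nothing about smoothness of the extension at $y=0$, let alone about its being a diffeomorphism of manifolds with boundary. Establishing it would mean expressing the normal geodesic family in a half space chart and checking $C^\infty$ extension with the correct anisotropic ($y$ versus $y^2$) behaviour in the $V^1$ and $V^2$ directions; this is nowhere carried out, neither in the paper nor in your sketch.

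The paper avoids all of this by a comparison you missed: on $M_4\cap M$ the two distance functions differ by a bounded amount,
\[
|t_1(x)-t_2(x)|=|d(x,H)-d(x,\convex)|\le C_{K,L},
\]
which is inequality \eqref{ineq:t1t2}, valid because the nearest point projections of $x\in M_4$ to $H$ and to $\convex$ lie in the relatively compact sets $L$ and $K$ of Lemma \ref{lem:proj}. Hence the single cusp-side bound already suffices:
\[
\|\nabla f\|^2\le C'e^{-2t_1}\left(\varphi_1^2+\varphi_2^2\right)\le C'\left(e^{-2t_1}\varphi_1^2+e^{2C_{K,L}}e^{-2t_2}\varphi_2^2\right),
\]
and \eqref{ineq:theta} follows from $e^{-2t_i}\le e^{-t_i}$. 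No second compactification, no second application of Lemma \ref{lem:errortheta}, and no boundary regularity of $f$ in normal exponential coordinates is needed. If you want to keep your route, you must first prove the compatibility lemma you postponed; otherwise, replace your second estimate by this comparison of $t_1$ and $t_2$.
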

\begin{proof}
	By \eqref{equ:duamet}, letting $y=e^{-t}$, the metric in the half space model is given by $g=g_y\oplus \frac{\dd y^2}{y^2}$ where $g_y=\begin{pmatrix}
	\frac{1}{y^2}Id_m & 0 \\ 0 & \frac{1}{y^4}Id_q
	\end{pmatrix}$ under some distributions $W^1$ and $W^2$ on $N$, which satisfies the condition in Lemma \ref{lem:errortheta}. 
	Due to $M_4\subset M_1\subset M_c(\Pi)=(\Pi\backslash N)\times\Rplus$, where $\Pi$ is a discrete parabolic subgroup of $\Gamma$ which preserves the metric $g_y$ on $N$ for every $y$ in $\Rplus$, the quotient metric on $M_4$ also satisfies the condition in Lemma \ref{lem:errortheta}. Moreover, by $M_4\subset M_1\cap M_2\subset O-O'$, we have that $M_4$ is contained in $L\times[0,1]$, where $L=proj_H(O-O')$ is relatively compact by Lemma \ref{lem:proj}.
	Therefore, using Lemma \ref{lem:errortheta} with $U=L$ there exists $C_f'>0$ such that
	\begin{align}\label{equ:nabla f}
	\|\nabla f(\cdot,y)\|^2\leq C_f'y^2.
	\end{align}
	Due to $M_4=M_1\cap M_2-M_3$ and the definition of $\varphi_1,\varphi_2$, we have $1=\varphi_1^2(x)+\varphi_2^2(x)$ for $x$ in $M_4$, and the right-hand side of the above inequality equals $C_f'y^2(|\varphi_1|^2+|\varphi_2|^2)$.
	
	We restrict our attention to $x$ in $M_4\cap M$. By the argument of the proof of Lemma \ref{lem:cusp}, we have 
	\begin{equation}\label{equ:y t1}
		y=e^{-t_1(x)}.
	\end{equation}
	By definition of $K,L$, the nearest points of $x$ in $H$ and $ \convex$ are contained in $K$ and $L$. Therefore by compactness of $K, L$ (Lemma \ref{lem:proj}), we have
	\begin{equation}\label{ineq:t1t2}
	|t_1(x)-t_2(x)|=|d(x,H)-d(x, \convex)|\leq \sup\{d(x_1,x_2)|x_1\in K,x_2\in L \}=C_{K,L}.
	\end{equation}
	Hence $e^{-t_1(x)}\leq e^{C_{K,L}}e^{-t_2(x)}$ for $x$ in $M_4\cap M$. Therefore on $M_4\cap M$, by \eqref{equ:nabla f},\eqref{equ:y t1} and \eqref{ineq:t1t2}
	\[\|\nabla f(x)\|^2\leq C_f'e^{-2t_1(x)}(\varphi_1^2(x)+\varphi_2^2(x))\leq C_f'(e^{-2t_1(x)}\varphi_1^2(x)+e^{2C_{K,L}}e^{-2t_2(x)}\varphi_2^2(x)).\]
	The proof is complete due $t_1,t_2\geq 0$.
\end{proof}
We state our strengthened version of Lemma \ref{lem:cusp} and \ref{lem:complement}.
\begin{replem}{lem:cusp} 
	In the standard cusp region, for every $C>0$ there exist a compact set $U_1$ in $M_1\cap M$ and a constant $C_1>0$ such that the following holds. For any compact set $V$ in $M_1\cap M$ there exists $\epsilon_V>0$ such that for all $f\in C_c^{\infty}(M_1\cap M)$ we have
	\begin{equation}\label{ineq:cusp*} 
	\begin{split}
	E(f)+C_1\int_{U_1}f^2\dd vol\geq \frac{1}{2}\int\|\nabla (e^{\ell t_1}f)\|^2e^{-2\ell t_1}\dd vol+\epsilon_V\int_V f^2\dd vol+C\int_{M_4}e^{-t_1}f^2\dd vol.
	\end{split}
	\end{equation}
\end{replem}
We need a variant of Poincar\'e's inequality.
\begin{lem}
	For relatively compact sets $V_1, V_2$ in a Riemannian manifold $\mani$, where $V_1$ is connected open with smooth boundary and $V_2$ is a subset of $V_1$ with nonempty interior, there exists a positive constant $\epsilon$, such that for all $g\in C_c^\infty(\mani)$ we have
	\begin{equation}\label{ineq:poincare}
	\int_{V_1}\|\nabla g\|^2\dd vol+\int_{V_2}g^2\dd vol\geq \epsilon\int_{V_1}g^2\dd vol.
	\end{equation}
\end{lem}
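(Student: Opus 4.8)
The plan is to argue by contradiction, using the Rellich compactness theorem already invoked in Proposition~\ref{prop:finicodimension}. Suppose no such $\epsilon$ exists. Then for each integer $n\geq 1$ there is a function $g_n\in C_c^\infty(\mani)$ for which \eqref{ineq:poincare} fails with $\epsilon=1/n$. Since the inequality is homogeneous of degree two in $g$, I may rescale so that $\int_{V_1}g_n^2\,\dd vol=1$, and then the failure reads
\begin{equation*}
\int_{V_1}\|\nabla g_n\|^2\,\dd vol+\int_{V_2}g_n^2\,\dd vol<\frac{1}{n}.
\end{equation*}
In particular both $\int_{V_1}\|\nabla g_n\|^2\,\dd vol\to 0$ and $\int_{V_2}g_n^2\,\dd vol\to 0$ as $n\to\infty$.

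First I would observe that the restrictions of the $g_n$ to $V_1$ form a bounded sequence in $H^1(V_1)$, since $\|g_n\|_{H^1(V_1)}^2=\int_{V_1}g_n^2\,\dd vol+\int_{V_1}\|\nabla g_n\|^2\,\dd vol\to 1$. Because $V_1$ is relatively compact with smooth boundary, the inclusion $H^1(V_1)\hookrightarrow L^2(V_1)$ is compact by the Rellich theorem \cite[Chapter 4, Proposition 4.4]{taylor1}. Passing to a subsequence, I may therefore assume that $g_n$ converges weakly in $H^1(V_1)$ and strongly in $L^2(V_1)$ to a limit $g\in H^1(V_1)$.

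Next I would identify the limit. By weak lower semicontinuity of the Dirichlet energy, $\int_{V_1}\|\nabla g\|^2\,\dd vol\leq\liminf_n\int_{V_1}\|\nabla g_n\|^2\,\dd vol=0$, so $\nabla g=0$ almost everywhere on $V_1$; since $V_1$ is connected, $g$ equals a constant $c$ on $V_1$. The strong $L^2$ convergence gives $\int_{V_1}g^2\,\dd vol=\lim_n\int_{V_1}g_n^2\,\dd vol=1$, whence $c^2\,\mathrm{vol}(V_1)=1$ and in particular $c\neq 0$. On the other hand, strong $L^2$ convergence restricted to the smaller set yields $\int_{V_2}g^2\,\dd vol=\lim_n\int_{V_2}g_n^2\,\dd vol=0$, that is $c^2\,\mathrm{vol}(V_2)=0$. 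Since $V_2$ has nonempty interior, $\mathrm{vol}(V_2)>0$, which forces $c=0$ and contradicts $c\neq 0$. This contradiction establishes the inequality.

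The one delicate point — and thus the step I expect to require the most care — is the application of the compact embedding: one must check that the hypotheses (relative compactness of $V_1$ and smoothness of $\partial V_1$) are precisely those guaranteeing the Rellich theorem on the manifold-with-boundary $\overline{V_1}$, and that a single subsequence can be taken to converge strongly in $L^2$ on both $V_1$ and its subset $V_2$ simultaneously (which is automatic once the convergence holds on $V_1\supset V_2$). Everything else is the routine normalization-and-constant-limit argument.
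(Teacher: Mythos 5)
Your proof is correct and takes essentially the same route as the paper's: a contradiction argument with a normalized sequence, Rellich compactness to extract a limit converging weakly in $H^1(V_1)$ and strongly in $L^2(V_1)$, lower semicontinuity to conclude the limit is constant on the connected set $V_1$, and vanishing on $V_2$ (which has positive volume) to force the constant to be zero, contradicting the $L^2$ normalization. The only cosmetic differences are that you normalize $\int_{V_1}g_n^2\,\dd vol=1$ where the paper normalizes $\|g_n\|_{H^1(V_1)}=1$, and that you apply weak lower semicontinuity directly to the Dirichlet energy rather than to the $H^1$ norm; neither affects the argument.
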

\begin{proof}
	Suppose that the result does not hold. Recall the notion of Sobolev spaces from Section \ref{sec:finite}. Then for every $n\in\bb N$ there exists a smooth function $g_n$ such that $\|g_n\|_{H^1(V_1)}=1$ and
	\begin{equation}\label{equ:gn}
		\int_{V_1}\|\nabla g_n\|^2\dd vol+\int_{V_2}g_n^2\dd vol\leq \frac{1}{n}\int_{V_1}g_n^2\dd vol\leq \frac{1}{n}.
	\end{equation}
	By Banach-Alaoglu's theorem, there exists a subsequence $\{g_{n_j}\}$ which converges weakly to a function $g$ in $H^1(V_1)$. By the Rellich theorem \cite[Chapter 4, Proposition 4.4]{taylor1}, the injection $H^1(V_1)\rightarrow L^2(V_1)$ is compact. Then the subsequence $\{g_{n_j}\}_{j\in\bb N}$ converges strongly to the same function $g$ in $L^2(V_1)$. 
	Now by weakly convergence
	\begin{equation}\label{equ:gH}
	\|g\|_{H^1(V_1)}\leq \liminf_{j\rightarrow \infty}\|g_n\|_{H^1(V_1)}=1.
	\end{equation}
	But by \eqref{equ:gn}, we have $\|g_n\|^2_{L^2(V_1)}\geq 1-\frac{1}{n}$, and the strong convergence implies $\|g\|^2_{L^2(V_1)}=1$. Due to \eqref{equ:gH}, we obtain
	\[\int_{V_1}\|\nabla g\|^2\dd vol =0. \]
	Hence the limit function $g$ is constant. 
	
	Again by \eqref{equ:gn}, we have
	$\int_{V_2}g_n^2\dd vol\leq \frac{1}{n}$.
	Taking limit, we have $\int_{V_2}g^2\dd vol\leq 0$, 
	which implies that $g$ is constant zero on $V_2$. Since $g$ is constant, we see that $g=0$, which contradicts the fact that $\|g\|_{L^2(V_1)}=1$. 
\end{proof}
\begin{proof}[Proof of Lemma* \ref{lem:cusp}]
By Lemma \ref{lem:proj}, we have $M_4\subset L\times\bb R_{\geq 0}$. By the same argument as in the proof of Lemma \ref{lem:cusp}, using Proposition \ref{prop:cusp}, we have the above inequality with $\int_Vf^2$ replaced by $\int_{L\times \bb R_{\geq 0}}e^{-t_1}f^2$. 

Applying \eqref{ineq:poincare} with $g=e^{\ell t_1}f$, $V_2=L\times[1,2]$ and $V_1$ a relatively compact connected open set with smooth boundary containing $V_2\cup V$ implies that
	\begin{align*}
	\int_{V_1}\|\nabla (e^{\ell t_1}f)\|^2e^{-2\ell t_1}\dd vol+&\int_{V_2}e^{-t_1}f^2\dd vol \geq \epsilon_1\left(\int_{V_1}\|\nabla g\|^2\dd vol+\int_{V_2}g^2\dd vol\right)\\
	&\geq \epsilon_2\int_{V_1} g^2\dd vol=\epsilon_2\int_{V_1} f^2e^{2\ell t_1}\dd vol\geq \epsilon_3\int_V f^2\dd vol.
	\end{align*}
The proof is complete.
\end{proof}

\begin{replem}{lem:complement}
	In the complement of the convex core, for every $C>0$ there exist a compact set $U_2$ in $M_2\cap M$ and a constant $C_2>0$ such that the following holds. For any compact set $V$ in $M_2\cap M$ there exists $\epsilon_V>0$ such that for all $f\in C_c^{\infty}(M_2\cap M)$ we have
	\begin{align}
	E(f)+C_2\int_{U_2}f^2 \dd vol\geq &\frac{1}{2}\int\|\nabla (e^{\ell t_2}f)\|^2e^{-2\ell t_2}\dd vol+\epsilon_V\int_Vf^2 \dd vol+C\int_{M_4}e^{-t_2}f^2 \dd vol. \label{ineq:complement*}
	\end{align}
\end{replem}
\begin{proof}
By Lemma \ref{lem:proj}, we have $M_4\subset M_2\subset \varbet(K\times \bb R_{\geq 0})$. Then the proof is exactly the same as the proof of Lemma \ref{lem:complement}, using Lemma \ref{prop:complement} with the constant $2(4\ell^2+1)+C$.
\end{proof}
\begin{proof}[Proof of Proposition \ref{prop:gradient}]
		Applying Lemma \ref{lem:nabf} with $\theta=\frac{\pi}{2}\bar{\varphi_3}$, we get a constant $C_\theta$ such that \eqref{ineq:theta} holds for $\theta$. Using Lemma$^*$ \ref{lem:cusp} and \ref{lem:complement} with $C=C_\theta$, we obtain $U_1$, $U_2$.
	Then follow the same argument as in the proof of the special case of Proposition \ref{prop:gradient}. The proof is complete. 
\end{proof}

\noindent Jialun LI\\
Université de Bordeaux\\
jialun.li@math.u-bordeaux.fr

\begin{thebibliography}{Tay10b}
	
	\bibitem[BO00]{bunkeolbrich_2000_kleinian}
	Ulrich Bunke and Martin Olbrich.
	\newblock The spectrum of {K}leinian manifolds.
	\newblock {\em J. Funct. Anal.}, 172(1):76--164, 2000.
	
	\bibitem[BO08]{bunkeolbrich_2008_geometric}
	Ulrich Bunke and Martin Olbrich.
	\newblock Scattering theory for geometrically finite groups.
	\newblock In {\em Geometry, analysis and topology of discrete groups}, volume~6
	of {\em Adv. Lect. Math. (ALM)}, pages 40--136. Int. Press, Somerville, MA,
	2008.
	
	\bibitem[Bor07]{borthwick2007spectral}
	David Borthwick.
	\newblock {\em Spectral theory of infinite-area hyperbolic surfaces}.
	\newblock Springer, 2007.
	
	\bibitem[Bow93]{bowditch_geometrical_1993}
	B.~H. Bowditch.
	\newblock Geometrical {Finiteness} for {Hyperbolic} {Groups}.
	\newblock {\em Journal of Functional Analysis}, 113(2):245--317, May 1993.
	
	\bibitem[Bow95]{bowditch1995}
	B.~H. Bowditch.
	\newblock Geometrical finiteness with variable negative curvature.
	\newblock {\em Duke Mathematical Journal}, 77(1):229--274, 1995.
	
	\bibitem[CI99]{corletteIozzi1999}
	K.~Corlette and A.~Iozzi.
	\newblock Limit sets of discrete groups of isometries of exotic hyperbolic
	spaces.
	\newblock {\em Transactions of the American Mathematical Society},
	351(4):1507--1530, 1999.
	
	\bibitem[CS14]{castillon2014harmonic}
	Philippe Castillon and Andrea Sambusetti.
	\newblock On asymptotically harmonic manifolds of negative curvature.
	\newblock {\em Math. Z.}, 277(3-4):1049--1072, 2014.
	
	\bibitem[Don81]{donnelly1981essential}
	Harold Donnelly.
	\newblock On the essential spectrum of a complete {R}iemannian manifold.
	\newblock {\em Topology}, 20(1):1--14, 1981.
	
	
	\bibitem[Dav96]{davies1996spectral}
	Edward~Brian Davies.
	\newblock {\em Spectral theory and differential operators}, volume~42.
	\newblock Cambridge University Press, 1996.
	
	\bibitem[EMM91]{epstein1991resolvent}
	C.~L. Epstein, R.~B. Melrose, and G.~A. Mendoza.
	\newblock Resolvent of the laplacian on strictly pseudoconvex domains.
	\newblock {\em Acta Mathematica}, 167(1):1--106, 1991.
	
	\bibitem[Fed59]{federer1959}
	H.~Federer.
	\newblock Curvature measures.
	\newblock {\em Trans. Amer. Math. Soc.}, 93:418--491, 1959.
	
	\bibitem[GHL04]{gallot2004riemannian}
	Sylvestre Gallot, Dominique Hulin, and Jacques Lafontaine.
	\newblock Riemannian geometry.
	\newblock 2004.
	
	\bibitem[GM12]{guillarmou2012resolvent}
	C.~Guillarmou and R.~Mazzeo.
	\newblock Resolvent of the laplacian on geometrically finite hyperbolic
	manifolds.
	\newblock {\em Inventiones mathematicae}, 187(1):99--144, 2012.
	
	\bibitem[Ham04]{hamenstadt2004}
	U.~Hamenstädt.
	\newblock Small eigenvalues of geometrically finite manifolds.
	\newblock {\em Journal of Geometric Analysis}, 14(2):281--290, 2004.
	
	\bibitem[Hel79]{helgason1979differential}
	S.~Helgason.
	\newblock {\em Differential geometry, Lie groups, and symmetric spaces},
	volume~80.
	\newblock Academic press, 1979.
	
	\bibitem[Kim15]{kim2015counting}
	Inkang Kim.
	\newblock Counting, mixing and equidistribution of horospheres in geometrically
	finite rank one locally symmetric manifolds.
	\newblock {\em Journal f{\"u}r die reine und angewandte Mathematik (Crelles
		Journal)}, 2015(704):85--133, 2015.
	
	\bibitem[Lax02]{lax1}
	P.~D. Lax.
	\newblock {\em Functional Analysis}.
	\newblock John Wiley\&Sons,Inc., 2002.
	
	\bibitem[LP82]{laxphillips82}
	P.~D. Lax and R.~S. Phillips.
	\newblock {The Asymptotic Distribution of Lattice Points in Euclidean and
		Non-Euclidean Spaces}.
	\newblock {\em J. Funct. Anal.}, 46(280-350), 1982.
	
	\bibitem[Mos73]{mostow1973strong}
	George~D Mostow.
	\newblock {\em Strong Rigidity of Locally Symmetric Spaces}.
	\newblock Number~78. Princeton University Press, 1973.
	
	\bibitem[Pan89]{pansu1989carnot}
	Pierre Pansu.
	\newblock M\'etriques de {C}arnot-{C}arath\'eodory et quasiisom\'etries des
	espaces sym\'etriques de rang un.
	\newblock {\em Ann. of Math. (2)}, 129(1):1--60, 1989.
	
	\bibitem[PP12]{parkkonenpaulin2012}
	J.~Parkkonen and F.~Paulin.
	\newblock On strictly convex subsets in negatively curved manifolds.
	\newblock {\em Journal of Geometric Analysis}, 22(3):621--632, 2012.
	
	\bibitem[Qui06]{quintpatterson}
	Jean-Francois Quint.
	\newblock {An overview of Patterson-Sullivan theory}.
	\newblock In {\em Workshop The barycenter method, FIM, Zurich}, 2006.
	
	\bibitem[RT15]{roblin2015critical}
	T.~Roblin and S.~Tapie.
	\newblock Critical exponent and bottom of the spectrum in pinched negative
	curvature.
	\newblock {\em Mathematical Research Letters}, 22(3):929--944, 2015.
	
	\bibitem[Sel60]{selberg60discontinuous}
	A.~Selberg.
	\newblock On discontinuous groups in higher-dimensional spaces.
	\newblock {\em Contributions to Function Theory, Tata Institute, Bombay}, 1960.
	
	\bibitem[Tay10a]{taylor1}
	M.~E. Taylor.
	\newblock {\em Partial Differential Equations
		\uppercase\expandafter{\romannumeral1}}.
	\newblock Springer, 2010.
	
	\bibitem[Tay10b]{taylor2}
	M.~E. Taylor.
	\newblock {\em Partial Differential Equations
		\uppercase\expandafter{\romannumeral2}}.
	\newblock Springer, 2010.
	
	\bibitem[Wal76]{walter76}
	R.~Walter.
	\newblock Some analytical properties of geodesically convex sets.
	\newblock {\em Abhandlungen aus dem Mathematischen Seminar der Universität
		Hamburg}, 45(1):263--282, 1976.
	
\end{thebibliography}
\end{document}